\documentclass[10pt]{article}
\usepackage{graphicx}
\usepackage{mathrsfs}
\usepackage{amsfonts}
\usepackage{amssymb}
\usepackage{amsmath}
\usepackage{amsthm}
\usepackage[bookmarks=true]{hyperref}

\DeclareMathOperator{\dist}{dist}

\newtheorem{theorem}{Theorem}[section]
\newtheorem{lemma}[theorem]{Lemma}
\newtheorem{conjecture}[theorem]{Conjecture}
\newtheorem{proposition}[theorem]{Proposition}
\newtheorem{corollary}[theorem]{Corollary}
\theoremstyle{remark}
\newtheorem*{remark}{Remark}

\title{Multi-arm incipient infinite clusters in 2D: scaling limits and winding
numbers}
\author{Chang-Long Yao \thanks{Academy of Mathematics and Systems Science,
CAS, Beijing, China (E-mail: deducemath@126.com)}}

\begin{document}
\maketitle
\markboth{Sample paper for the {\protect\ntt\lowercase{amsmath}} package}
{Sample paper for the {\protect\ntt\lowercase{amsmath}} package}
\renewcommand{\sectionmark}[1]{}

\begin{abstract}
We study the alternating $k$-arm incipient infinite cluster (IIC) of
site percolation on the triangular lattice $\mathbb{T}$.  Using
Camia and Newman's result that the scaling limit of critical site
percolation on $\mathbb{T}$ is CLE$_6$, we prove the existence of
the scaling limit of the $k$-arm IIC for $k=1,2,4$.  Conditioned on
the event that there are open and closed arms connecting the origin
to $\partial \mathbb{D}_R$, we show that the winding number variance
of the arms is $(3/2+o(1))\log R$ as $R\rightarrow \infty$, which
confirms a prediction of Wieland and Wilson (2003).  Our proof uses
two-sided radial SLE$_6$ and coupling argument. Using this result we
get an explicit form for the CLT of the winding numbers, and get
analogous result for the 2-arm IIC, thus improving our earlier
result.

\textbf{Keywords}: percolation; scaling limit; SLE; CLE; incipient
infinite cluster; winding number

\textbf{AMS 2010 Subject Classification}: 60K35, 82B43
\end{abstract}

\section{Introduction}
Percolation is a central model of probability theory and statistical
physics, see \cite{9,23} for background and \cite{10} for a summary
of recent progress.  For bond percolation on $\mathbb{Z}^d$, there
is almost surely no infinite open cluster at the critical point when
$d=2$ or $d>10$ (see the recent work \cite{100}), and is conjectured
that this is the case whenever $d\geq 2$. The term ``incipient
infinite cluster'' (IIC) has been used by physicists to refer to the
large-scale connected clusters present in critical percolation, and
was defined mathematically by Kesten \cite{16} in two dimensions.
Roughly speaking, IIC is obtained by conditioning on the event that
there is an open path connecting the origin to the boundary of the
box with radius $n$ centered at the origin, and letting
$n\rightarrow\infty$. Following Kesten's spirit, Damron and
Sapozhnikov introduced multi-arm IIC in \cite{4}.  We will give the
definitions of these IICs later.

In fact, IIC is a very natural and robust object that can be
constructed in many different ways.  We introduce some natural
constructions for dimension two as follows.  In \cite{16}, Kesten
gave an alternative way to construct the IIC: Take $p>p_c$,
condition on the cluster of the origin to be infinite, and let
$p\rightarrow p_c$. J\'{a}rai \cite{15} showed that if we choose a
site uniformly from the largest cluster or the spanning clusters in
$[-n,n]^2$, and let $n\rightarrow\infty$, then we get the IIC.  In
\cite{27} J\'{a}rai also proved that the invasion percolation
cluster looks asymptotically like the IIC, when viewed from an
invaded site $v$, in the limit $|v|\rightarrow\infty$.  Similarly,
Damron and Sapozhnikov \cite{4} showed that the invasion percolation
cluster looks asymptotically like the 2-arm IIC (resp. 4-arm IIC),
when viewed from a site $v$ belonging to the backbone (resp.
outlets), in the limit $|v|\rightarrow\infty$.  Recently, Hammond,
Pete and Schramm \cite{11} defined a local time measure on the
exceptional set of dynamical percolation, and showed that at a
typical time with respect to this measure, the percolation
configuration has the law of IIC.  For IIC in high dimensions, see
\cite{13,14}, where it was also shown that several related and
natural constructions lead to the same object.

In this paper, we will study the scaling limit of IIC for site
percolation on the triangular lattice $\mathbb{T}$ and the winding
numbers of the arms.  Before giving our main results, we wish to
introduce some related works in the literature.

The scaling limit of IIC has been extensively studied in recent
years, and it has turned out to be useful in understanding the
discrete model.  We list a few related works in the following:

\begin{itemize}
\item \emph{Percolation in high dimensions.}  Van der Hofstad conjectured in \cite{13} that
the scaling limit of IIC above 6 dimensions is infinite canonical
super-Brownian motion (ICSBM), which corresponds to the canonical
measure of super-Brownian motion conditioned on non-extinction.
ICSBM consists of a single infinite Brownian motion path together
with super-Brownian motions branching off from this path.  In
\cite{30},  it is showed that the scaling limit of the backbone of
the high-dimensional IIC is Brownian motion.  The scaling limit of
another version of high-dimensional IIC is conjectured to be
integrated super-Brownian excursion (ISE) by Hara and Slade
\cite{12}.  Using the lace expansion, they obtained strong evidence
for their conjecture in \cite{12}.
\item \emph{Oriented percolation in high dimensions.}  The existence of
the IIC for sufficiently spread-out oriented percolation on
$\mathbb{Z}^d\times \mathbb{Z}_+$ above $4+1$ dimensions has been
proved by van der Hofstad, den Hollander, and Slade \cite{31}.  Van
der Hofstad \cite{13} proved that ICSBM is the scaling limit of the
IIC.
\item \emph{Percolation on a regular tree.} The IIC on a regular tree
was constructed by Kesten in \cite{29}.  It has a simple structure,
and can be viewed as an infinite backbone from the origin with
critical percolation clusters attached to it.  Very recently, Angel,
Goodman and Merle \cite{1} proved that the scaling limit of the IIC
(w.r.t. the pointed Gromov-Hausdorff topology) is a random
$\mathbb{R}$-tree with a single end.
\end{itemize}

Motivated by a question from Beffara and Nolin \cite{2}, in
\cite{24} we proved a CLT for the winding numbers of alternating
arms crossing the annulus $A(l,n)$ (as $n\rightarrow\infty$ and $l$
fixed) for critical percolation on $\mathbb{T}$ and $\mathbb{Z}^2$.
Using this, we also got a CLT for corresponding multi-arm IIC in
\cite{24}.  However, the exact estimate for the winding number
variance was not given in that paper.  Based on numerical
simulations,  Wieland and Wilson \cite{50} made a conjecture on the
winding number variance of Fortuin-Kasteleyn contours (and more
generally, the winding at points where $k$ paths come together),
including the above case.  The conjecture seems hard, to our
knowledge, it has been verified rigorously on only a few particular
cases.  For example, conditioned on the event that there are 2
(resp. 3) disjoint loop-erased random walks starting at the
neighbors of the origin and ending at the unit circle centered at
the origin in $\eta\mathbb{Z}^2$, Kenyon \cite{70} (see also
``Remarks on LERW" in \cite{50}) showed that the winding number
variance of the paths is $(1/2+o(1))\log(1/\eta)$ (resp.
$(2/9+o(1))\log(1/\eta)$) as $\eta\rightarrow 0$.  The interested
reader is referred to the Introduction of \cite{24} for a more
general discussion and references on winding numbers.

The rest of the paper is organized as follows. Section \ref{s11}
introduces the basic notation used throughout the paper, and gives
the definitions of $k$-arm IIC measure and arm events for CLE$_6$.
Section \ref{s12} gives our main results, together with the main
ideas in their proofs.  In Section \ref{s21}, we define the uniform
metric, which is related to the convergence in distribution. Section
\ref{s22} collects different versions of coupling arguments that
will be used.  Section \ref{s23} gives basic properties of arm
events, including a generalized quasi-multiplicativity.  Section
\ref{s3} provides proofs of scaling-limit results for multi-IIC.  In
Section \ref{s41}, we introduce two-sided radial SLE and give second
moment estimate for its winding number.  We study convergence of
discrete exploration to SLE$_6$ in Section \ref{s42}, moment bounds
on the winding of discrete exploration in Section \ref{s43}, and
decorrelation of winding in Section \ref{s44}, which will enable us
to translate the winding number result for two-sided radial SLE$_6$
to percolation.  Section \ref{s45} provides proofs of the winding
number results for the arms.

\subsection{The model and notation}\label{s11}

Let $\mathbb{T}=(\mathbb{V},\mathbb{E})$ denote the triangular
lattice, where $\mathbb{V}:=\{x+ye^{\pi
i/3}\in\mathbb{C}:x,y\in\mathbb{Z}\}$ is the set of sites, and
$\mathbb{E}$ is the set of bonds, connecting adjacent sites.
Throughout the paper, we will focus on critical site percolation on
$\eta\mathbb{T}$ with small mesh size $\eta>0$, where each site is
chosen to be blue (open) or yellow (closed) with probability $1/2$,
independently of each other.  Let $P=P^{\eta}$ denote the
corresponding product probability measure on the set of
configurations.  We also represent the measure as a (blue or yellow)
random coloring of the faces of the dual hexagonal lattice
$\eta\mathbb{H}$, and view the sites of $\eta\mathbb{T}$ as the
hexagons of $\eta\mathbb{H}$.  Further, let $H_v$ denote the regular
hexagon centered at $v\in\mathbb{V}(\mathbb{T})$ with side length
$1/\sqrt{3}$ with two of its sides parallel to the imaginary axis.

A \emph{path} is a sequence $v_0,\ldots,v_n$ of distinct sites of
$\mathbb{T}$ such that $v_{i-1}$ and $v_i$ are neighbors for all
$i=1,\ldots,n$.  A \emph{boundary path} (or \emph{b-path}) is a
sequence $e_0,\ldots,e_n$ of distinct edges of $\mathbb{H}$
belonging to the boundary of a cluster and such that $e_{i-1}$ and
$e_i$ meet at a vertex of $\mathbb{H}$ for all $i=1,\ldots,n$.  A
\emph{circuit} is a path whose first and last sites are neighbors.
For a circuit $\mathcal {C}$, define
\begin{equation*}
\overline{\mathcal {C}}:= \mathcal {C}\cup\mbox{ interior sites of
}\mathcal {C}.
\end{equation*}

A \emph{color sequence} $\sigma$ is a sequence
$(\sigma_1,\sigma_2,\dots,\sigma_k)$ of ``blue" and ``yellow" of
length $k$.  We use the letters ¡°B¡± and ¡°Y¡± to encode the
colors.  We identify two sequences if they are the same up to a
cyclic permutation.

We say that a finite set $D$ of hexagons is \emph{simply connected}
if both $D$ and its complement are connected.  For a simply
connected set $D$ of hexagons, we denote by $\Delta D$ its
\emph{external site boundary}, or \emph{s-boundary} (i.e., the set
of hexagons that do not belong to $D$ but are adjacent to hexagons
in $D$), and by $\partial D$ the topological boundary of $D$ when
$D$ is considered as a domain of $\mathbb{C}$.  We will call a
bounded, simply connected subset $D$ of $\mathbb{T}$ a \emph{Jordan
set} if $\Delta D$ is a circuit.

Given a Jordan set $D\subset\mathbb{T}$, for any vertex
$v\in\mathbb{H}$ that belongs to $\partial D$, if the edge incident
on $v$ that is not in $D$ does not belong to a hexagon in $D$, we
call $v$ an \emph{e-vertex}.

Given a Jordan set $D$ and two e-vertices $a,b$ in $\partial D$, we
denote by $\partial_{a,b}D$ the portion of $\partial D$ traversed
counterclockwise from $a$ to $b$, and call it the \emph{right
boundary}; the remaining part of the boundary is denoted by
$\partial_{b,a}D$ and is called the \emph{left boundary}.
Analogously, the portion of $\Delta_{a,b}D$ of $\Delta D$ whose
hexagons are adjacent to $\partial_{a,b}D$ is called the \emph{right
s-boundary} and the remaining part the \emph{left s-boundary}.
Imagine coloring blue all the hexagons in $\Delta_{a,b}D$ and yellow
all those in $\Delta_{b,a}D$. Then, for any percolation
configuration inside $D$, there is a unique b-path $\gamma$ from $a$
to $b$ which separates the blue cluster adjacent to $\Delta_{a,b}D$
from the yellow cluster adjacent to $\Delta_{b,a}D$. We call
$\gamma=\gamma_{D,a,b}$ a \emph{percolation exploration path}.

Given a Jordan domain $D$ of the plane, we denote by $D^{\eta}$ the
largest Jordan set of hexagons of $\eta\mathbb{H}$ that is contained
in $D$.  For two distinct points $a,b\in\partial D$, we let
$\gamma_{D,a,b}^{\eta}:=\gamma_{D^{\eta},a_{\eta},b_{\eta}}$, where
$a_{\eta}$ (resp. $b_{\eta}$) is the e-vertex in $\partial D^{\eta}$
closest to $a$ (resp. $b$).  If there are two such vertices closest
to $a$ (resp. $b$), we choose the first one encountered going
clockwise (resp. counterclockwise) along $\partial D^{\eta}$.
Further, let
$\partial_{a,b}D^{\eta}:=\partial_{a_{\eta},b_{\eta}}D^{\eta}$ and
$\Delta_{a,b}D^{\eta}:=\Delta_{a_{\eta},b_{\eta}}D^{\eta}$.

For a domain $D$, let $\overline{D}:=D\cup\partial D$.  For a
topological annulus $A=\overline{D}_2\backslash D_1$ ($D_1$ and
$D_2$ are Jordan domains) whose boundary is composed of two simple
loops in the plane, we denote by $\partial_1A$ (resp. $\partial_2A$)
the \emph{inner} (resp. \emph{outer}) \emph{boundary} of $A$, and
let $A^{\eta}:=\overline{D_2^{\eta}}\backslash D_1^{\eta}$.

Define the \emph{disc} and \emph{annulus} as follows: for
$0<r<R,z\in\mathbb{C}$,
\begin{align*}
&\mathbb{D}_R(z):=\{x\in
\mathbb{C}:|x-z|<R\},~~\mathbb{D}_R:=\mathbb{D}_R(0),~~\mathbb{D}:=\mathbb{D}_1;\\
&A(z;r,R):=\overline{\mathbb{D}_R(z)}\backslash\mathbb{D}_r(z),~~A(r,R):=A(0;r,R).
\end{align*}

Now let us define the arm events for percolation.  For a topological
annulus $A$ whose boundary is composed of two simple loops, denote
by $\mathcal {A}_{\sigma}^{\eta}(A)=\mathcal
{A}_{k,\sigma}^{\eta}(A)$ the event that there exist $|\sigma|=k$
disjoint monochromatic paths (\emph{arms}) in $A^{\eta}$ connecting
the two boundary pieces of $A^{\eta}$, whose colors are those
prescribed by $\sigma$, when taken in counterclockwise order.  For
$|\sigma|\leq 6$, given a Jordan domain $D$ with a point $z\in D$,
let $\mathcal {A}_{\sigma}^{\eta}(z;D)$ denote the event that there
exist $|\sigma|$ disjoint arms connecting $\partial D^{\eta}$ and
the hexagon in $\eta\mathbb{H}$ whose center is closest to $z$ (if
there are more than one such hexagons, we choose a unique one by
some deterministic method), whose colors are those prescribed by
$\sigma$, when taken in counterclockwise order. For any $\eta\leq
r<R$ and $z\in \mathbb{C}$, write
$$\mathcal {A}_{\sigma}^{\eta}(z;r,R):=\mathcal
{A}_{\sigma}^{\eta}(A(z;r,R)).$$  For short, let $\mathcal
{A}_{\sigma}^{\eta}(r,R)=\mathcal {A}_{\sigma}^{\eta}(0;r,R)$ and
let $\mathcal {A}_1^{\eta}=\mathcal {A}_B^{\eta}$, $\mathcal
{A}_2^{\eta}=\mathcal {A}_{BY}^{\eta}$, $\mathcal
{A}_4^{\eta}=\mathcal {A}_{BYBY}^{\eta}$.

The IIC was defined by Kesten \cite{16} as follows.  It is shown in
\cite{16} that the limit
$$\nu_1^{\eta}(E):=\lim_{R\rightarrow\infty}P^{\eta}(E|\mathcal {A}_1^{\eta}(\eta,R))$$
exists for any event $E$ that depends on the state of finitely many
sites in $\eta\mathbb{T}$.  The unique extension of $\nu_1^{\eta}$
to a probability measure on configurations of $\eta\mathbb{T}$
exists and we call $\nu_1^{\eta}$ the \emph{IIC measure} or
\emph{1-arm IIC measure}.  Then, Damron and Sapozhnikov introduced
multi-arm IIC measures in \cite{4}.  Let $k=2,4$.  For every
cylinder event $E$, it is shown in Theorem 1.6 in \cite{4} the limit
\begin{equation*}
\nu_k^{\eta}(E):=\lim_{R\rightarrow\infty}P^{\eta}(E|\mathcal
{A}_k^{\eta}(\eta,R))
\end{equation*}
exists.  The unique extension of $\nu_k^{\eta}$ to a probability
measure on the configurations of $\eta\mathbb{T}$ exists.  We call
$\nu_k^{\eta}$ the \emph{$k$-arm IIC measure}.  A curve
$\gamma[0,1]$ is called a \emph{loop} if $\gamma(0)=\gamma(1)$.  All
percolation interfaces under $\nu_k^{\eta}$ induce a probability
measure on the loops in the one-point compactification
$\hat{\mathbb{C}}$ of $\mathbb{C}$, denoted by $\mu_k^{\eta}$.  We
postpone precise definitions of the space of loops and the topology
of weak convergence till Section \ref{s21}.  We also call
$\mu_k^{\eta}$ the $k$-arm IIC measure.

Given a percolation configuration, we assign a direction to each
edge of $\eta\mathbb{H}$ belonging to the boundary of a cluster in
such a way that the hexagon to the right of the edge with respect to
the direction is blue.  To each b-path $\gamma$, we can associate a
direction according to the direction of the edges in the path.
Denote by $\Gamma_B(\gamma)$ (resp., $\Gamma_Y(\gamma)$) the set of
blue (resp., yellow) hexagons adjacent to $\gamma$; we also let
$\Gamma(\gamma):=\Gamma_B(\gamma)\cup\Gamma_Y(\gamma)$.

For any Jordan domain $D$, let $P_D^{\eta}$ denote the percolation
law in $D^{\eta}$ with \emph{monochromatic (blue) boundary
condition}, that is, all the sites in $\Delta D^{\eta}$ are blue.
Then the percolation interfaces under $P_D^{\eta}$ induce a law on
the loops in $\overline{D}$, denoted by $\mu_D^{\eta}$.

In Camia and Newman \cite{32}, the following theorem is shown:

\begin{theorem}[\cite{32}]\label{t2}
Let $D$ be a Jordan domain. As $\eta\rightarrow 0$, $\mu_D^{\eta}$
converges in law, under the topology induced by metric (\ref{e2}),
to a probability distribution $\mu_D$ on collections of continuous
nonsimple loops in $\overline{D}$.
\end{theorem}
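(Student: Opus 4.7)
The plan is to prove the theorem by combining tightness of $\{\mu_D^{\eta}\}$ with uniqueness of all subsequential limits, both obtained by reducing to known facts about single interfaces.

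First I would establish precompactness of the laws $\{\mu_D^{\eta}\}$ on collections of loops in $\overline{D}$ equipped with the metric (\ref{e2}). Here the tool is an Aizenman--Burchard regularity estimate applied simultaneously to all boundary paths of clusters inside $D^{\eta}$: the required input is a uniform polynomial bound on the probability that some annulus $A(z;r,R)\subset D$ is crossed by three or more disjoint b-paths. This bound follows from the polychromatic 3-arm estimate together with standard RSW theory for critical site percolation on $\mathbb{T}$, and yields a Hölder modulus of continuity for every loop in a common parameterization, uniformly in $\eta$. This implies tightness in the topology of (\ref{e2}).

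Next I would identify subsequential limits via iterated exploration. Fix a subsequence $\eta_n\to 0$ along which $\mu_D^{\eta_n}$ converges weakly to some $\mu_D^{\ast}$. Choose a countable dense set of pairs $(a_j,b_j)$ of points on $\partial D$ and consider the exploration paths $\gamma_{D,a_j,b_j}^{\eta_n}$ obtained after locally modifying the boundary colors to match the $\sigma$-structure prescribed by the pair; because the original boundary is monochromatic blue, such modifications occur only in a neighborhood of $\{a_j,b_j\}$ and are absorbed by the coupling argument. By the Smirnov--Camia--Newman convergence of the single exploration path, each $\gamma_{D,a_j,b_j}^{\eta_n}$ converges jointly in law to a chordal SLE$_6$ in $D$ from $a_j$ to $b_j$. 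Each macroscopic loop in the limit is encoded by the concatenation of finitely many such exploration arcs between dense boundary probes, and inside each complementary Jordan subdomain the induced boundary condition is again monochromatic, so the same procedure can be iterated. Standard six-arm estimates ensure that only finitely many loops of diameter $\geq\varepsilon$ arise at each stage.

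Finally, the $\varepsilon$-truncated law of the loop ensemble obtained this way is determined by the joint limit of finitely many exploration paths, hence does not depend on the subsequence. Together with tightness this forces $\mu_D^{\eta}\Rightarrow \mu_D$ for a unique $\mu_D$, and loops of diameter $<\varepsilon$ contribute negligibly in the metric (\ref{e2}) as $\varepsilon\to 0$.

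I expect the main obstacle to be the joint convergence of the inductive exploration: after peeling off several outer loops the unexplored region is a random Jordan domain whose boundary is built from previously constructed limit curves, and one must argue Carathéodory continuity of chordal SLE$_6$ with respect to its domain, together with a uniform bound (via quasi-multiplicativity and the six-arm interior exponent) on the number of outermost loops of diameter $\geq\varepsilon$. Once this is in hand, the iteration closes and produces the loop ensemble $\mu_D$.
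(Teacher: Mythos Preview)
The paper does not give a proof of this statement: Theorem~\ref{t2} is quoted verbatim from Camia and Newman \cite{32} (and \cite{3}) and is used as a black box throughout. So there is no ``paper's own proof'' to compare against; your proposal is an attempt to reproduce the Camia--Newman argument itself rather than anything the present paper does.

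That said, your sketch is broadly in the spirit of \cite{3,32}: tightness via Aizenman--Burchard crossing bounds, followed by identification of subsequential limits through an iterated exploration whose single-interface limits are chordal SLE$_6$. A couple of points where your outline drifts from what actually works in \cite{3,32}: the exploration is not organized around a countable dense set of boundary pairs $(a_j,b_j)$ with local boundary recoloring, but rather around a fixed target point and a specific deterministic rule for choosing the next exploration path inside each complementary component; and the relevant arm estimate controlling the number of macroscopic loops and the non-degeneracy of the iterated domains is the interior six-arm bound together with boundary three-arm estimates, not a three-arm bound alone. Your anticipated obstacle---Carath\'eodory stability of SLE$_6$ in random domains produced by earlier exploration steps---is indeed the technical heart of \cite{3,32}, and is handled there via the locality of SLE$_6$ and careful continuity arguments. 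But again, none of this is carried out in the present paper.
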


The continuum nonsimple loop process in Theorem \ref{t2} is just the
\emph{full scaling limit} introduced by Camia and Newman
\cite{3,32}.  Since it is also called the \emph{conformal loop
ensemble} CLE$_6$ in \cite{21} (for the general CLE$_\kappa$,
$8/3\leq \kappa\leq 8$, see \cite{21,22}), we just call it CLE$_6$
(in $\overline{D}$) in the present paper.

For simplicity, let $P_R^{\eta}:=P_{\mathbb{D}_R}^{\eta}$,
$\mu_R^{\eta}:=\mu_{\mathbb{D}_R}^{\eta}$ and
$\mu_R:=\mu_{\mathbb{D}_R}$.

We need to define arm events for CLE$_6$ in a way that makes them
measurable and equal to the limit of the probability of
corresponding arm events for percolation as $\eta\rightarrow 0$. Now
we express the arm events $\mathcal {A}_k^{\eta}(r,R),k=1,2,4$ for
$\mu_R^{\eta}$ in terms of loops (cluster interfaces).  See Figure
\ref{fig3}.

\begin{figure}
\begin{center}
\includegraphics[height=0.22\textwidth]{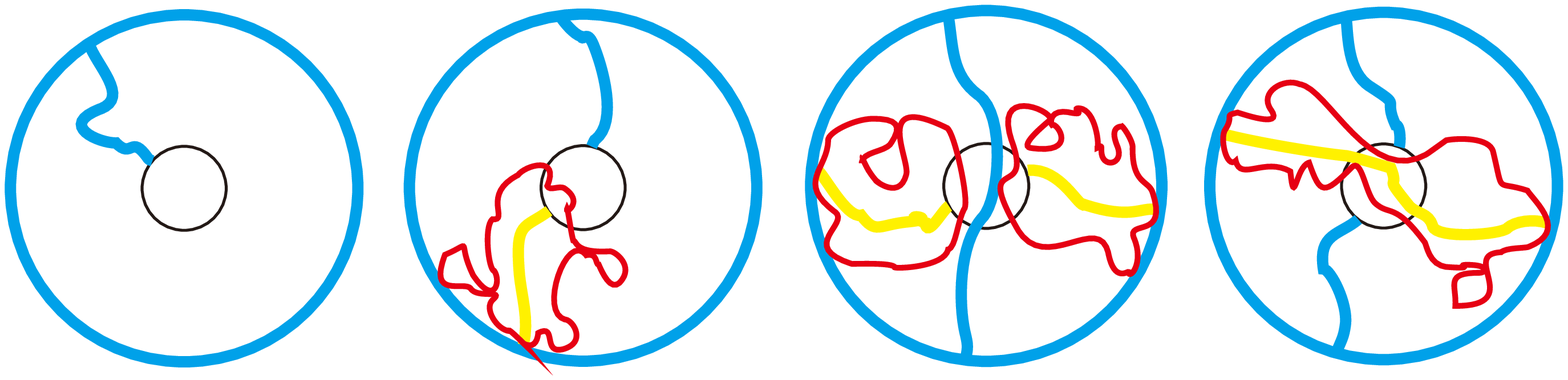}
\caption{Illustration of arm events with monochromatic blue boundary
condition.  The red loops are the outer boundaries of the clusters
containing yellow arms.  The first panel indicates $\mathcal
{A}_1^{\eta}(r,R)$. The second panel indicates $\mathcal
{A}_2^{\eta}(r,R)$. The last two panels indicate $\mathcal
{A}_4^{\eta}(r,R)$.}\label{fig3}
\end{center}
\end{figure}

\begin{itemize}
\item  It is well-known that the complement of $\mathcal
{A}_1^{\eta}(r,R)$ is that there exists a yellow circuit surrounding
the origin in $A^{\eta}(r,R)$.  Since $\mu_R^{\eta}$ has
monochromatic blue boundary condition, the outer boundary of the
cluster containing this yellow circuit is in
$A^{\eta}(r,R)\backslash
\partial_1 A^{\eta}(r,R)$, and has
counterclockwise direction.  So, we have
\begin{equation}\label{e11}
\mathcal {A}_1^{\eta}(r,R)=\left\{
\begin{aligned}
&\mbox{There exists no counterclockwise
loop surrounding}\\
&\mbox{the origin in $A^{\eta}(r,R)\backslash
\partial_1 A^{\eta}(r,R)$}
\end{aligned}
\right\}.
\end{equation}
In fact, a simple observation leads to that
\begin{equation*}
\mathcal {A}_1^{\eta}(r,R)=\left\{
\begin{aligned}
&\mbox{There exits neither
counterclockwise loop nor clockwise}\\
&\mbox{loop surrounding the origin in $A^{\eta}(r,R)\backslash
\partial_1 A^{\eta}(r,R)$}
\end{aligned}
\right\}.
\end{equation*}
\item Assume that $\mathcal
{A}_2^{\eta}(r,R)$ holds, then there exist a blue arm and a yellow
arm connecting $\partial_1A^{\eta}(r,R)$ and
$\partial_2A^{\eta}(r,R)$. The outer boundary of the cluster
containing the yellow arm must intersect with both of the two
boundary pieces of $A^{\eta}(r,R)$. Conversely, if there exists a
counterclockwise loop $\gamma$ in $A^{\eta}(r,R)$ intersecting both
of the two boundary pieces of $A^{\eta}(r,R)$, we can find a blue
arm in $\Gamma_B(\gamma)$ and a yellow arm in $\Gamma_Y(\gamma)$,
which connect the two boundary pieces of $A^{\eta}(r,R)$. Hence,
\begin{equation}\label{e12}
\mathcal {A}_2^{\eta}(r,R)=\left\{
\begin{aligned}
&\mbox{There exists a counterclockwise
loop in $\overline{\mathbb{D}_R^{\eta}}$, which}\\
&\mbox{intersects with both $\partial_1A^{\eta}(r,R)$ and
$\partial_2A^{\eta}(r,R)$}
\end{aligned}
\right\}.
\end{equation}
\item Denote by $\mathcal {A}_4^{\eta,B}(r,R)$ (resp.
$\mathcal {A}_4^{\eta,Y}(r,R)$) the event that there are four
alternating arms in $A^{\eta}(r,R)$ connecting
$\partial_1A^{\eta}(r,R)$ and $\partial_2A^{\eta}(r,R)$, and the two
blue (resp. yellow) arms are in the same cluster in
$\overline{\mathbb{D}_R^{\eta}}$.  It is clear that $\mathcal
{A}_4^{\eta}(r,R)=\mathcal {A}_4^{\eta,B}(r,R)\cup\mathcal
{A}_4^{\eta,Y}(r,R)$.  If $\mathcal {A}_4^{\eta,B}(r,R)$ occurs,
there exist two counterclockwise loops in
$\overline{\mathbb{D}_R^{\eta}}$, which intersect with both
$\partial_1A^{\eta}(r,R)$ and $\partial_2A^{\eta}(r,R)$; if
$\mathcal {A}_4^{\eta,Y}(r,R)$ occurs, there exists a
counterclockwise loop in $\overline{\mathbb{D}_R^{\eta}}$, which is
composed of two curves $\gamma_1$ and $\gamma_2$: $\gamma_1$ starts
at $a\in\partial_2A^{\eta}(r,R)$ and ends at
$b\in\partial_2A^{\eta}(r,R)$, $\gamma_2$ starts at $b$ and ends at
$a$, both $\gamma_1$ and $\gamma_2$ intersect with
$\partial_1A^{\eta}(r,R)$. In fact, it is easy to see that
\begin{equation}\label{e13}
\mathcal {A}_4^{\eta}(r,R)=\left\{
\begin{aligned}
&\mbox{There exist two counterclockwise
loops in $\overline{\mathbb{D}_R^{\eta}}$, which}\\
&\mbox{intersect with both $\partial_1A^{\eta}(r,R)$ and
$\partial_2A^{\eta}(r,R)$;
or there}\\
&\mbox{exists a counterclockwise loop in
$\overline{\mathbb{D}_R^{\eta}}$, which
is composed}\\
&\mbox{of two curves $\gamma_1$ and $\gamma_2$: $\gamma_1$ starts at $a\in\partial_2A^{\eta}(r,R)$ and}\\
&\mbox{ends at $b\in\partial_2A^{\eta}(r,R)$, $\gamma_2$ starts at
$b$ and ends at $a$, both}\\
&\mbox{$\gamma_1$ and $\gamma_2$ intersect with
$\partial_1A^{\eta}(r,R)$}
\end{aligned}
\right\}.
\end{equation}
\end{itemize}

This leads us to define arm events $\mathcal {A}_k(r,R),k=1,2,4$ for
$\mu_R$ as follows:

\begin{align*}
&\mathcal {A}_1(r,R):=\{\mbox{There exists no counterclockwise loop
surrounding the origin in $A(r,R)$}\}.\\
&\mathcal {A}_2(r,R):=\left\{
\begin{aligned}
&\mbox{There exists a counterclockwise loop in
$\overline{\mathbb{D}}_R$, which}\\
&\mbox{intersects with both $\partial_1A(r,R)$ and
$\partial_2A(r,R)$}
\end{aligned}
\right\}.\\
&\mathcal {A}_4(r,R):=\left\{
\begin{aligned}
&\mbox{There exist two counterclockwise loops
in $\overline{\mathbb{D}}_R$, which intersect}\\
&\mbox{with both $\partial_1A(r,R)$ and $\partial_2A(r,R)$; or there
exists a counterclockwise}\\
&\mbox{loop in $\overline{\mathbb{D}}_R$, which is composed of two
curves $\gamma_1$ and $\gamma_2$:
$\gamma_1$ starts}\\
&\mbox{at $a\in\partial_2A(r,R)$ and ends at $b\in\partial_2A(r,R)$,
$\gamma_2$ starts at $b$ and ends}\\
&\mbox{at $a$, both $\gamma_1$ and $\gamma_2$ intersect with
$\partial_1A(r,R)$}
\end{aligned}
\right\}.
\end{align*}

Given two Jordan domains $D$ and $D'$ with $\overline{D'}\subset D$,
similarly to the definitions of $\mathcal {A}_k(r,R)$ for $\mu_R$,
one can define arm events $\mathcal {A}_k(\overline{D}\backslash
D')$ for $\mu_D$.

In this paper, we sometimes omit the superscript $\eta$ of
$P^{\eta}$ and $\gamma^{\eta}$ when it is clear that we are talking
about the the discrete percolation model.  $C,C_1,C_2,\ldots$ and
$\alpha,\beta$ denote positive finite constants that may change from
line to line or page to page according to the context.

\subsection{Main results}\label{s12}
Our main results include two parts, the first part is about the
existence and conformal invariance of the $k$-arm IIC scaling limit,
the second part is about the variance estimate and CLT for the
winding numbers of the arms, conditioned on the 2-arm event and
under the 2-arm IIC measure, respectively.

\subsubsection{Scaling limit of $k$-arm IIC}

\begin{theorem}\label{t6}
Let $k=1,2,4$.  Let $D$ be a Jordan domain with a point $z\in D$.
Let $\{D_n\}$ be a sequence of Jordan domains such that $z\in
D_n,\overline{D}_n\subset D$ and the diameter of $D_n$ converges to
zero as $n\rightarrow \infty$.
\begin{itemize}
\item As $\eta\rightarrow 0$ and
$n\rightarrow\infty$, $\mu_D^\eta[\cdot|\mathcal {A}_k^{\eta}(z;D)]$
and $\mu_D[\cdot|\mathcal {A}_k(\overline{D}\backslash D_n)]$
converge in law, under the topology induced by metric (\ref{e2}), to
the same probability measure, denoted by $\mu_{k,D,z}$.
\item Furthermore, let $D'$ be a Jordan domain and let
$f:\overline{D}\rightarrow \overline{D'}$ a continuous function that
maps $D$ conformally onto $D'$. Let $z':=f(z)$. Then the image of
$\mu_{k,D,z}$ under $f$ has the same law as $\mu_{k,D',z'}$.
\end{itemize}
\end{theorem}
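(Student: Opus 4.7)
The strategy is to carry out a two-step limit: first approximate the singular condition $\mathcal{A}_k^\eta(z;D)$ (arms reaching the hexagon near $z$) by the macroscopic annular condition $\mathcal{A}_k^\eta(\overline{D}\setminus D_n)$, then apply Camia--Newman's convergence (Theorem~\ref{t2}) to transfer to the CLE$_6$ side, and finally let $n\to\infty$.

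For fixed $n$, I would first check that the macroscopic arm event is a continuity event for the Camia--Newman convergence. By (\ref{e11})--(\ref{e13}), $\mathcal{A}_k^\eta(\overline{D}\setminus D_n)$ is described purely in terms of (non)existence of oriented loops crossing the topological annulus $\overline{D}\setminus D_n$; the natural continuum counterpart $\mathcal{A}_k(\overline{D}\setminus D_n)$ is written the same way in terms of CLE$_6$ loops. Using the fact that CLE$_6$ almost surely has no loop that only marginally crosses a given annulus (a standard consequence of the a.s. absence of arm events with more than the generic number of crossings), the boundary of $\mathcal{A}_k(\overline{D}\setminus D_n)$ has $\mu_D$-measure zero. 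Hence Theorem~\ref{t2} yields, for every cylinder event $E$,
\begin{equation*}
\mu_D^\eta[E\mid \mathcal{A}_k^\eta(\overline{D}\setminus D_n)] \xrightarrow[\eta\to 0]{} \mu_D[E\mid \mathcal{A}_k(\overline{D}\setminus D_n)].
\end{equation*}

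Next I would use the generalized quasi-multiplicativity of arm probabilities from Section~\ref{s23} together with the coupling/separation-of-arms machinery of Section~\ref{s22} to establish the decoupling estimate
\begin{equation*}
\lim_{n\to\infty}\,\limsup_{\eta\to 0}\,\bigl|\mu_D^\eta[E\mid \mathcal{A}_k^\eta(z;D)] - \mu_D^\eta[E\mid \mathcal{A}_k^\eta(\overline{D}\setminus D_n)]\bigr|=0
\end{equation*}
for every cylinder $E$ depending only on hexagons outside a fixed neighbourhood of $z$. The key point is that, conditional on $\mathcal{A}_k^\eta(\overline{D}\setminus D_n)$, the arms reach $\partial D_n^\eta$ in a well-separated way, so the extra local conditioning (presence of $k$ arms from $\partial D_n$ down to the hexagon near $z$) decouples from the outside configuration up to a multiplicative error tending to $1$ as $\eta\to 0$ and $n\to\infty$. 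Combining this with the previous step, and analogously comparing $\mu_D[\,\cdot\mid \mathcal{A}_k(\overline{D}\setminus D_n)]$ for different $n$ via the continuum quasi-multiplicativity, produces a common weak limit $\mu_{k,D,z}$ and gives the first bullet.

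For the second bullet, apply $f$: by the (known) conformal invariance of the full-scaling-limit CLE$_6$, $f$ maps CLE$_6$ in $D$ to CLE$_6$ in $D'$ in distribution. Since $\mathcal{A}_k(\overline{D}\setminus D_n)$ is defined in terms of loops and topological boundaries, which transform covariantly under $f$, it pulls back to $\mathcal{A}_k(\overline{D'}\setminus f(D_n))$, and $\{f(D_n)\}$ is a legal shrinking sequence of Jordan domains around $z'=f(z)$. Passing to $n\to\infty$ on both sides yields $f_*\mu_{k,D,z}=\mu_{k,D',z'}$. \emph{The main obstacle} is the decoupling estimate above: the quasi-multiplicativity has to be used with the monochromatic boundary condition on $\Delta D^\eta$ inherited from $\mu_D^\eta$ rather than in a free annulus, and the error must be controlled uniformly in $\eta$ as $n\to\infty$ and uniformly over cylinder events $E$ whose support sits outside a fixed neighbourhood of $z$; this is where the Section~\ref{s22} coupling arguments and the generalized quasi-multiplicativity of Section~\ref{s23} are doing the bulk of the work.
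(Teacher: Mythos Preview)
Your overall strategy---approximate the singular conditioning by the macroscopic annular one, invoke Camia--Newman (Theorem~\ref{t2}) at the macroscopic level, and use the coupling of Section~\ref{s22} to compare scales---is exactly the paper's route (see Lemma~\ref{l4}). The second bullet is also handled just as the paper does it, via Theorem~\ref{t3}; you should add the remark that $\operatorname{diam} f(D_n)\to 0$, which the paper extracts from the distortion bound Lemma~\ref{l1}.

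There is, however, a genuine gap in how you package the first bullet. You phrase everything in terms of cylinder events on the site configuration, but the claimed convergence is weak convergence of measures on $\Omega_D$ in the curve topology of metric~(\ref{e2}). Agreement on cylinder events does not by itself yield this; you need tightness in $(\Omega_D,\dist)$, which the paper obtains separately as Lemma~\ref{l6} via the Aizenman--Burchard Hypothesis~H1 (and this requires some care under the conditioning, handled case by case with Reimer's inequality and quasi-multiplicativity). The paper then uses Proposition~\ref{l2} not merely to decouple cylinder events but to produce a coupling in which the two loop configurations literally coincide outside a common $k$-circuit contained in $A(\epsilon,\sqrt\epsilon)$, giving $\dist\le 2\sqrt\epsilon$ directly (equation~(\ref{e6})); together with tightness this pins down the limit in the correct topology. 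Your appeal to a ``continuum quasi-multiplicativity'' to compare different $n$ on the CLE side is also not available in the paper---all scale comparisons are done discretely and then one passes to the limit. So: keep your architecture, but (i) insert the Aizenman--Burchard tightness argument, (ii) replace the cylinder-event decoupling by the metric coupling conclusion~(\ref{e6}), and (iii) drop the continuum quasi-multiplicativity in favour of the discrete coupling followed by $\eta\to 0$.
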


We call $\mu_{k,D,z}$ the \emph{scaling limit of $k$-arm IIC pinned
at $z$ in $D$}, which can be considered as a conditioned version of
CLE$_6$.  In \cite{34}, the authors constructed \emph{CLE$_{\kappa}$
in $D$ conditioned on the event that $z$ is in the gasket} (i.e.,
the set of points that are not surrounded by any loop in
CLE$_{\kappa}$) for $8/3<\kappa\leq 4$.  One can view $\mu_{1,D,z}$
as CLE$_6$ in $D$ conditioned on the event that $z$ is in the
gasket.  We write $\mu_{k,R}:=\mu_{k,\mathbb{D}_R,0}$.

\begin{remark}
Using Theorem \ref{t6} and Theorem 4 in \cite{32}, it is not hard to
show that $\mu_{k,D,z}$ inherits some domain Markov property from
CLE$_6$.  It is expected that analogs of Propositions 4.3 and 4.4 in
\cite{34} for domain Markov property of simple CLE in the punctured
disc also hold for $\mu_{k,D,z}$.
\end{remark}

For a domain $D$, we denote by $I_D$ the mapping (on $\Omega$ or
$\Omega_R$, see the definitions in Section \ref{s21}) in which all
portions of curves that exit $\overline{D}$ are removed.  Let
$\hat{I}_D$ be the same mapping lifted to the space of probability
measures on $\Omega$ or $\Omega_R$.

\begin{theorem}\label{t4}
There exists a unique probability measure $\mu_k$ on the space
$\Omega$ of collections of continuous curves in $\hat{\mathbb{C}}$
such that $\mu_{k,R}\rightarrow \mu_k$ as $R\rightarrow\infty$ in
the sense that for every bounded domain $D$, as
$R\rightarrow\infty$, $\hat{I}_D\mu_{k,R}\rightarrow
\hat{I}_D\mu_k$. Furthermore, as $\eta\rightarrow 0$, $\mu_k^{\eta}$
converges in law, under the topology induced by metric (\ref{e4}),
to $\mu_k$.
\end{theorem}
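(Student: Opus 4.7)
The plan is to first construct $\mu_k$ as the $R\to\infty$ limit of the measures $\mu_{k,R}$ produced by Theorem \ref{t6}, in the localized sense $\hat{I}_D\mu_{k,R}\to\hat{I}_D\mu_k$, and then to verify separately that $\mu_k^\eta\to\mu_k$ as $\eta\to 0$ by a diagonal argument. Uniqueness is essentially built in: a probability measure on $\Omega$ is determined by its images $\hat{I}_D$ for a countable exhausting family of bounded domains (e.g.\ $D=\mathbb{D}_n$), so any measure satisfying the stated convergence condition must coincide with $\mu_k$.

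For the Cauchy-type convergence of $\hat{I}_D\mu_{k,R}$ as $R\to\infty$, I would apply one of the coupling arguments collected in Section \ref{s22}. Concretely, fix $\overline{D}\subset\mathbb{D}_{R_1}\subset\mathbb{D}_{R_2}$ and work first at the discrete level with $\mu_{R_1}^\eta[\,\cdot\,|\mathcal{A}_k^\eta(\eta,R_1)]$ and $\mu_{R_2}^\eta[\,\cdot\,|\mathcal{A}_k^\eta(\eta,R_2)]$. The goal is a coupling under which the interfaces restricted to $D$ agree off a bad event whose probability tends to $0$ as $R_1\to\infty$, uniformly in $\eta$; the natural way to set this up is to explore the $k$-arm conditioning locally near the origin and then resample independently in the annulus $A(R_1,R_2)$, controlling the error by arm-extension estimates combined with the generalized quasi-multiplicativity established in Section \ref{s23}. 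Passing $\eta\to 0$ via Theorem \ref{t6} promotes the coupling to one of $\mu_{k,R_1}$ and $\mu_{k,R_2}$ with $I_D$-agreement on an event of probability $\to 1$. This gives Cauchyness; the limit $\lambda_D:=\lim_R\hat{I}_D\mu_{k,R}$ exists, the family $\{\lambda_D\}$ is automatically consistent under further restrictions, and a standard projective-limit construction yields a unique measure $\mu_k$ on $\Omega$ with $\hat{I}_D\mu_k=\lambda_D$ for every bounded $D$.

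For the second assertion, that $\mu_k^\eta\to\mu_k$ in the topology of metric (\ref{e4}), I would show $\hat{I}_D\mu_k^\eta\to\hat{I}_D\mu_k$ for each bounded $D$. Given $\epsilon>0$, first choose $R$ with $\overline{D}\subset\mathbb{D}_R$ so large that $\hat{I}_D\mu_{k,R}$ is $\epsilon/3$-close to $\hat{I}_D\mu_k$ in a Prohorov-type metric, using the first part. Second, exploit that $\nu_k^\eta$ is itself defined by Kesten and Damron--Sapozhnikov as the $R'\to\infty$ limit of $P^\eta[\,\cdot\,|\mathcal{A}_k^\eta(\eta,R')]$; another coupling of the type in Section \ref{s22}, this time between the full-plane IIC and the monochromatic-boundary version $\mu_R^\eta[\,\cdot\,|\mathcal{A}_k^\eta(\eta,R)]$, shows that the boundary effect is localized near $\partial\mathbb{D}_R$ and gives $\hat{I}_D\mu_k^\eta$ within $\epsilon/3$ of $\hat{I}_D\bigl(\mu_R^\eta[\,\cdot\,|\mathcal{A}_k^\eta(\eta,R)]\bigr)$, uniformly in $\eta$. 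Third, apply Theorem \ref{t6} with $(D,z)=(\mathbb{D}_R,0)$ to get $\hat{I}_D\bigl(\mu_R^\eta[\,\cdot\,|\mathcal{A}_k^\eta(\eta,R)]\bigr)\to\hat{I}_D\mu_{k,R}$ as $\eta\to 0$. A triangle inequality closes the gap.

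The main obstacle is the first coupling estimate, which must be quantitative and, crucially, uniform in $\eta$ so that the double limit in $R$ and $\eta$ can be taken in either order. This is exactly the \emph{stability of the $k$-arm IIC under changes of the conditioning disc}: the law inside $D$ should depend only weakly on whether one conditions on $k$ arms reaching $\partial\mathbb{D}_{R_1}$ or $\partial\mathbb{D}_{R_2}$, for $R_1,R_2\gg\mathrm{diam}(D)$. Making this quantitative requires the generalized quasi-multiplicativity of arm probabilities established in Section \ref{s23}, together with RSW-type separation-of-arms estimates that allow the exploration inside $\mathbb{D}_{R_1}$ to be decoupled from the extension to $\partial\mathbb{D}_{R_2}$; once these ingredients are available in an $\eta$-uniform form, the rest of the proof is a fairly mechanical assembly of couplings and applications of Theorem \ref{t6}.
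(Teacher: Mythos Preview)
Your proposal follows essentially the same route as the paper: couple the discrete conditional measures $\mu_{R_1}^\eta[\,\cdot\,|\mathcal{A}_k^\eta(\eta,R_1)]$ and $\mu_{R_2}^\eta[\,\cdot\,|\mathcal{A}_k^\eta(\eta,R_2)]$ via Proposition~\ref{l3}, pass $\eta\to 0$ through Lemma~\ref{l4}/Theorem~\ref{t6} to obtain a coupling of $\mu_{k,R_1}$ and $\mu_{k,R_2}$, then apply Kolmogorov extension to build $\mu_k$; for the second part, compare $\mu_k^\eta$ with the finite-$R$ conditional measures and pass to the limit. Two points deserve correction.

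First, a genuine gap in the second assertion. You write that to prove $\mu_k^\eta\to\mu_k$ in the topology of metric~(\ref{e4}) it suffices to show $\hat{I}_D\mu_k^\eta\to\hat{I}_D\mu_k$ for every bounded $D$. This implication is not automatic: weak convergence in $(\Omega,\mathrm{Dist})$ is not the same as convergence of all local restrictions, and without tightness a sequence could fail to converge in $\Omega$ even though every $\hat{I}_D$-image converges. The paper closes this gap by invoking Lemma~\ref{l6}, which establishes relative compactness of $\{\mu_k^\eta\}_\eta$ in $\Omega$ via the Aizenman--Burchard criterion; one then takes an arbitrary subsequential limit $\mu_k'$ and shows $\hat{I}_{\mathbb{D}_r}\mu_k'=\hat{I}_{\mathbb{D}_r}\mu_k$ for all $r$, which forces $\mu_k'=\mu_k$. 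Your triangle-inequality argument is exactly what identifies the subsequential limit, but you need the tightness step to have a limit to identify.

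Second, a minor inaccuracy in your description of the coupling. You speak of ``resampling independently in the annulus $A(R_1,R_2)$,'' but Proposition~\ref{l3} works the other way: it produces, with high probability, an identical $k$-circuit $\mathcal{C}$ surrounding the origin in $A^\eta(r,R_1)$ together with identical configurations \emph{inside} $\mathcal{C}$. The agreement of interfaces in $\overline{D}$ then follows because $D$ lies inside $\mathcal{C}$ once $r$ is taken large relative to $\mathrm{diam}(D)$. The underlying input is indeed separation of arms and quasi-multiplicativity, as you say, but these are already packaged into Proposition~\ref{l3} and need not be re-derived.
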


We call $\mu_k$ the \emph{scaling limit of $k$-arm IIC}.  In
\cite{34}, the authors constructed \emph{CLE$_{\kappa}$ in the
punctured plane} for $8/3<\kappa\leq 4$.  One can view $\mu_1$ as
CLE$_6$ in the punctured plane.  Note that if one can construct IIC
for the discrete $O(n)$ models, it is expected that the scaling
limit of the IIC is just the corresponding CLE$_{\kappa}$ in the
punctured plane.  In particular, the scaling limit of IIC of the
critical Ising model (which is the $O(1)$ model) is expected to be
CLE$_3$ in the punctured plane.

\begin{remark}
From Camia and Newman's construction of the full-plane CLE$_6$, it
is easy to see that full-plane CLE$_6$ is invariant under scalings,
translations, and rotations.  However, with their construction, the
invariance of full-plane CLE$_6$ under the inversion $z\mapsto 1/z$
turns out to be not obvious to establish.  In \cite{37}, using the
Brownian loop soup, the authors proved the inversion-invariance of
full-plane CLE$_{\kappa}$ for $8/3<\kappa\leq 4$.  In \cite{34}, the
inversion-invariance of CLE$_{\kappa}$ in the punctured plane for
$8/3<\kappa\leq 4$ was also proved.  Hence, we propose the following
conjecture:
\begin{conjecture}
The full-plane CLE$_6$ and $\mu_k$ ($k=1,2,4$) are invariant under
$z\mapsto 1/z$.
\end{conjecture}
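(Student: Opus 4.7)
The plan is to first prove inversion-invariance of full-plane CLE$_6$ and then bootstrap this to $\mu_k$ using the characterization provided by Theorem \ref{t4}. Since for $\kappa=6$ the Brownian loop soup construction used in \cite{37} does not apply, I would instead exploit the percolation approximation together with a tightness-plus-uniqueness argument. The first step is to characterize full-plane CLE$_6$ among translation-invariant loop ensembles by the consistent family of restrictions $\hat{I}_{A(a,b)}\mu$ to bounded annuli $A(a,b)$, $0<a<b<\infty$. Such a characterization would reduce inversion-invariance to verifying that for each such annulus, $\hat{I}_{A(a,b)}\mu$ equals the push-forward of $\hat{I}_{A(1/b,1/a)}\mu$ under $z\mapsto 1/z$.

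For this equality, I would couple the full-plane CLE$_6$ with CLE$_6$ in $\mathbb{D}_R$ for $R\gg b$. By Theorem \ref{t2} and the Camia--Newman limiting procedure, $\hat{I}_{A(a,b)}\mu$ is the limit of $\hat{I}_{A(a,b)}\mu_R$. Composing with $z\mapsto 1/z$, which maps $\mathbb{D}_R$ conformally onto $\{|z|>1/R\}$, and applying the conformal invariance of CLE$_6$ in bounded domains (Theorem \ref{t2}), one obtains CLE$_6$ in $\{|z|>1/R\}$ with a monochromatic boundary at $|z|=1/R$. The goal is to show that as $R\to\infty$, the restrictions to $A(a,b)$ of both families (one with boundary at $|z|=R$, the other at $|z|=1/R$) converge to the \emph{same} limit. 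This should follow from the decorrelation estimates developed in Section \ref{s44} and the quasi-multiplicativity in Section \ref{s23}, provided one can show quantitatively that the influence of the monochromatic boundary on macroscopic structures decays as the boundary recedes to $0$ or $\infty$.

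For $\mu_k$, once inversion-invariance of full-plane CLE$_6$ is established, Theorem \ref{t4} represents $\mu_k$ as the limit of $\mu_{k,R}$, which itself is CLE$_6$ in $\mathbb{D}_R$ conditioned on the $k$-arm event at the origin. Under $z\mapsto 1/z$, conditioning on arms at $0$ becomes conditioning on arms at $\infty$. Combining inversion-invariance of the unconditioned measure with the conformal invariance statement in Theorem \ref{t6}, and arguing by approximating ``arms at $\infty$'' by arms across $A(M,2M)$ for $M\to\infty$ (which becomes arms across $A(1/(2M),1/M)$ after inversion), one should obtain that the push-forward of $\mu_k$ under inversion coincides with $\mu_k$ itself.

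The main obstacle is the second step above: namely, proving that the two limiting procedures (growing disc around $0$ versus shrinking hole at $0$) give rise to identical laws on any fixed annulus in the limit. This is essentially a quantitative ergodicity statement for CLE$_6$ that does not appear to be available in the existing literature; it requires showing that the conditional law of the CLE$_6$ configuration in $A(a,b)$ given the configuration outside $A(a/2,2b)$ converges, as the external boundary recedes to $0$ or $\infty$, to a single intrinsic law independent of the direction of approach. A natural route is via polynomial decay of correlations using the one-arm and polychromatic arm exponents, combined with the generalized quasi-multiplicativity of Section \ref{s23}, but making the coupling uniform in $R$ appears delicate enough that this is likely where the real work of the conjecture lies.
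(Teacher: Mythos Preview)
The statement you are attempting to prove is explicitly presented in the paper as a \emph{conjecture}, not a theorem; the paper offers no proof and indeed motivates the conjecture precisely by noting that the Brownian loop soup argument of \cite{37} (which works for $8/3<\kappa\le 4$) does not apply when $\kappa=6$. There is therefore no ``paper's own proof'' to compare your proposal against.

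That said, your outline is a sensible sketch of what a proof might look like, and you have correctly identified the crux: showing that the two limiting procedures (boundary receding to $\infty$ versus boundary shrinking to $0$) yield the same law on any fixed annulus. This is exactly the point at which the paper stops and declares the statement open. Your proposed route via quantitative decorrelation and the quasi-multiplicativity of Section \ref{s23} is natural, but note that those tools in the paper control the influence of \emph{conditioning on arm events}, not the influence of a monochromatic boundary condition per se; the coupling arguments in Section \ref{s22} are tailored to comparing two arm-conditioned measures, not to comparing CLE$_6$ in a large disc with CLE$_6$ in the complement of a small disc. Bridging that gap would require an additional ingredient---essentially a two-sided version of the Camia--Newman full-plane construction, or a direct mixing statement for CLE$_6$ under annular restriction---which is not available in the paper and, as you yourself note, is where the real difficulty lies. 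So your proposal is not wrong as a strategy, but it is a plan rather than a proof, and the paper treats the missing step as genuinely open.
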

\end{remark}

\subsubsection{Winding numbers of the arms}

For a curve $\gamma[0,T]$ in the plane with $\gamma(t)\neq 0$ for
all $0\leq t\leq T$, we define the \emph{winding number} of $\gamma$
(around $0$) by $\theta(\gamma):=\arg(\gamma(T))-\arg(\gamma(0))$,
with $\arg$ chosen continuous along $\gamma$.

Denote by $\mathcal {A}^{\eta}$ the event that the percolation
exploration path $\gamma^{\eta}_{\mathbb{D},1,-1}$ intersect with
the boundary of the hexagon $\eta H_0$.  Note that $\mathcal
{A}^{\eta}$ is the same as the event that there is a blue arm
connecting $\eta H_0$ to $\partial_{1,-1}\mathbb{D}^{\eta}$ and a
yellow arm connecting $\eta H_0$ to
$\partial_{-1,1}\mathbb{D}^{\eta}$.

Assume $\mathcal {A}^{\eta}$ occurs and $T$ is the first hitting
time with $\eta H_0$ of $\gamma^{\eta}_{\mathbb{D},1,-1}$. Let
$\theta_{\eta}:=\theta(\gamma^{\eta}_{\mathbb{D},1,-1}[0,T])$.

Theorem \ref{t11} establishes a particular case of Wieland and
Wilson's conjecture on winding number variance of Fortuin-Kasteleyn
contours \cite{50}.

\begin{theorem}\label{t11}
Conditioned on the event $\mathcal {A}^{\eta}$, we have
\begin{equation}\label{e29}
Var[\theta_{\eta}]=\left(\frac{3}{2}+o(1)\right)\log\left(\frac{1}{\eta}\right)\mbox{
as }\eta\rightarrow 0.
\end{equation}
Furthermore, under the conditional measure $P[\cdot|\mathcal
{A}^{\eta}]$,
\begin{equation}\label{e30}
\frac{\theta_{\eta}}{\sqrt{\frac{3}{2}\log\left(\frac{1}{\eta}\right)}}\rightarrow_{d}N(0,1)\mbox{
as }\eta\rightarrow 0.
\end{equation}
\end{theorem}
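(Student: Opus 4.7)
The plan is to reduce the winding number statistics of the discrete exploration path to those of two-sided radial SLE$_6$, and then exploit the stationarity and approximate independence of its winding across dyadic scales to run a central limit argument.

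First, I would perform a multi-scale decomposition. On the event $\mathcal{A}^\eta$, the path $\gamma^{\eta}_{\mathbb{D},1,-1}$ reaches $\eta H_0$, and I decompose its winding as $\theta_\eta = \sum_{j=1}^N \Delta_j$ (plus a negligible boundary contribution at the two extreme scales), where $N \asymp \log(1/\eta)$ and $\Delta_j$ is the winding accumulated between the first hits of $\partial \mathbb{D}_{2^{-j+1}}$ and $\partial \mathbb{D}_{2^{-j}}$. Section \ref{s42} shows that, after localising at $0$, the conditioned discrete exploration converges to two-sided radial SLE$_6$ targeted at $0$, and the second-moment computation for the winding of this continuum process (Section \ref{s41}) produces a variance of $3/2$ per unit of log radius, matching the Wieland--Wilson prediction for the 2-arm case.

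Second, I would transfer this variance estimate to the discrete level. Section \ref{s43} supplies uniform moment bounds on $\Delta_j$ strong enough to give uniform integrability of $\Delta_j^2$, so the distributional convergence of each $\Delta_j$ upgrades to convergence of its first two moments: $\mathbb{E}[\Delta_j] \to 0$ and $\mathbb{E}[\Delta_j^2] \to (3/2)\log 2$, uniformly in scales $j$ away from the two extreme scales. Summing over the $\asymp \log(1/\eta)$ intermediate scales and absorbing the $O(1)$ boundary layers into the error yields (\ref{e29}).

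Third, to upgrade to the CLT (\ref{e30}), I would show that $\Delta_j$ and $\Delta_{j'}$ are asymptotically uncorrelated when $|j-j'|$ is large. Here the decorrelation estimate of Section \ref{s44} is crucial: the driving diffusion of two-sided radial SLE$_6$ mixes exponentially fast in log-scale, and the coupling machinery of Section \ref{s22} lets me approximately resample the discrete exploration inside each dyadic annulus by an independent two-sided radial SLE$_6$ piece, with coupling errors summable over all scales. Grouping $O(\log \log (1/\eta))$ consecutive scales into blocks of nearly independent increments and applying a Lindeberg-type CLT for $m$-dependent sequences then yields (\ref{e30}).

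The main obstacle is this decorrelation and coupling step. The exploration path is globally defined and the conditioning event $\mathcal{A}^\eta$ is highly nonlocal, so one cannot invoke any direct spatial Markov property for $\gamma^{\eta}_{\mathbb{D},1,-1}$. The whole point of introducing two-sided radial SLE$_6$ is to provide a limiting target process that is stationary under log-scaling and mixes rapidly; nevertheless, building a coupling between the discrete path and this target that is robust under the conditioning on $\mathcal{A}^\eta$, and whose accumulated error across $\asymp \log(1/\eta)$ scales remains $o(\log(1/\eta))$ in variance and $o(\sqrt{\log(1/\eta)})$ in distribution, is the technical heart of the argument and where I expect the most work.
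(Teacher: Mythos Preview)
Your overall strategy matches the paper's, but there is a genuine gap in the variance step. You write $\theta_\eta=\sum_j\Delta_j$ on dyadic scales and then claim that summing $E[\Delta_j^2]\to(3/2)\log 2$ over $N\asymp\log_2(1/\eta)$ scales already gives (\ref{e29}). This ignores the off-diagonal terms $\sum_{j\neq k}E[\Delta_j\Delta_k]$. The decorrelation the paper actually proves (Lemma~\ref{l18} feeding into Lemma~\ref{l14}) is only that $|E[\Delta_j\cdot(\sum_{k>j}\Delta_k)]|\leq C$ uniformly, so the total off-diagonal contribution is $O(N)$, the \emph{same} order as the diagonal. With your dyadic $\epsilon=1/2$ this would shift the leading constant away from $3/2$. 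The paper's fix is to work on $\epsilon$-adic scales and take a double limit: for fixed small $\epsilon$ the diagonal is $\approx(3/2)\log(1/\epsilon)\cdot N_\epsilon=(3/2)\log(1/\eta)$ while the off-diagonal is $O(N_\epsilon)=O(\log(1/\eta)/\log(1/\epsilon))$, and only after sending $\epsilon\to 0$ does the latter become negligible (this is exactly the $[\log(1/\epsilon)]^{-1/2}$ and $[\log(1/\epsilon)]^{-1/7}$ in Lemmas~\ref{l14} and~\ref{l17}). Your outline needs either this double-limit mechanism or a much stronger pairwise decorrelation bound than what is available.

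For (\ref{e30}) your plan also differs from the paper. The paper does not run a fresh $m$-dependent or block CLT; it simply cites the CLT $\theta_\eta/\sqrt{\mathrm{Var}\,\theta_\eta}\to_d N(0,1)$ already established in \cite{24} by a martingale method, and then substitutes the now-known variance (\ref{e29}). Your blocking approach is a reasonable alternative, but note that it would again require quantitative decorrelation strong enough to make the inter-block covariances $o(\log(1/\eta))$, which is precisely the issue above.
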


Suppose the 2-arm event $\mathcal {A}^{\eta}_2(\eta,1)$ happens.  We
fix a deterministic way to choose a unique blue arm connecting
$\partial \mathbb{D}^{\eta}$ and $\eta H_0$, and denote by
$\tilde{\theta}_{\eta}$ the winding number of this arm (here we
consider the arm as a continuous curve by connecting the neighbor
sites with line segments).

The following corollary refines \cite{24} for the 2-arm case by
giving variance estimates and CLT for winding numbers of the arms in
explicit expressions.

\begin{corollary}\label{c4}
Under the conditional measure $P[\cdot|\mathcal
{A}^{\eta}_2(\eta,1)]$ and the 2-arm IIC measure $\nu_2^{\eta}$, as
$\eta\rightarrow 0$, we both have
\begin{equation*}
Var\left[\tilde{\theta}_{\eta}\right]=\left(\frac{3}{2}+o(1)\right)\log\left(\frac{1}{\eta}\right)~~\mbox{
and
}~~\frac{\tilde{\theta}_{\eta}}{\sqrt{\frac{3}{2}\log\left(\frac{1}{\eta}\right)}}\rightarrow_{d}N(0,1).
\end{equation*}
\end{corollary}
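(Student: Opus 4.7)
The plan is to reduce Corollary \ref{c4} to Theorem \ref{t11} via two decoupling steps that show the leading-order winding behavior is insensitive to (a) the specific arm-endpoint locations on $\partial \mathbb{D}^{\eta}$, and (b) the conditioning on arm extensions beyond $\partial \mathbb{D}^{\eta}$ inherent in $\nu_2^{\eta}$.

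For the first measure $P[\cdot \mid \mathcal{A}_2^{\eta}(\eta,1)]$, observe that the event $\mathcal{A}^{\eta}$ appearing in Theorem \ref{t11} is essentially $\mathcal{A}_2^{\eta}(\eta,1)$ specialized so that the blue and yellow arms land near $1$ and $-1$ respectively. I would decompose $\mathcal{A}_2^{\eta}(\eta,1)$ according to the discrete landing pattern of the two arms on $\partial \mathbb{D}^{\eta}$, and then invoke the arm-separation estimates (cf. Section \ref{s23}): after conditioning on the configuration in the outer ring $A(1/2,1)$, the conditional law of the arm configuration in $A(\eta,1/2)$ has Radon--Nikodym derivative bounded uniformly from above and below with respect to its analogue under $P[\cdot \mid \mathcal{A}^{\eta}]$. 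Since the chosen blue arm and the portion of $\gamma_{\mathbb{D},1,-1}^{\eta}[0,T]$ up to first hitting $\eta H_0$ both separate the same blue and yellow clusters inside $A(\eta,1/2)$, their winding difference is controlled by what happens in $A(1/2,1)$, which contributes $O(1)$ to both the mean and the variance. Hence $\tilde\theta_{\eta}$ inherits the variance $(\tfrac{3}{2}+o(1))\log(1/\eta)$ and the Gaussian limit from $\theta_{\eta}$.

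For the IIC measure $\nu_2^{\eta}$, I would use the Damron--Sapozhnikov construction $\nu_2^{\eta}(E)=\lim_{R\to\infty}P^{\eta}(E\mid \mathcal{A}_2^{\eta}(\eta,R))$ for cylinder events, noting that for each fixed $\eta>0$ the random variable $\tilde\theta_{\eta}$ is determined by finitely many sites inside $\overline{\mathbb{D}}^{\eta}$, so that
\begin{equation*}
\nu_2^{\eta}\bigl[F(\tilde\theta_{\eta})\bigr]=\lim_{R\to\infty}P^{\eta}\bigl[F(\tilde\theta_{\eta})\,\big|\,\mathcal{A}_2^{\eta}(\eta,R)\bigr]
\end{equation*}
for every bounded continuous $F$. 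A second application of quasi-multiplicativity and arm separation, now decoupling the annuli $A(\eta,1)$ and $A(1,R)$, shows that the conditional law of the configuration inside $A(\eta,1/2)$ under $P^{\eta}[\cdot\mid\mathcal{A}_2^{\eta}(\eta,R)]$ is comparable, uniformly in $R\ge 1$, to the same conditional law under $P^{\eta}[\cdot\mid\mathcal{A}_2^{\eta}(\eta,1)]$. The extra conditioning on the arms continuing past $\partial\mathbb{D}^{\eta}$ only reshapes the boundary landing pattern and thus again contributes $O(1)$ to the winding. Taking $R\to\infty$ (and combining with the variance calculation above) transfers the asymptotics to $\nu_2^{\eta}$.

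The main obstacle is to make the decoupling quantitative enough so that the $O(1)$ boundary discrepancies indeed contribute $o(\log(1/\eta))$ to the variance, rather than merely $O(1)$, uniformly across the three measures. Concretely, one needs the Radon--Nikodym comparison between the three conditional laws on $A(\eta,1/2)$ to have bounded moments, so that the $L^2$ cost of passing from $\theta_{\eta}$ to $\tilde\theta_{\eta}$ is genuinely bounded. This should follow from the quantitative arm-separation lemmas of Section \ref{s23} together with the second-moment bounds on the discrete winding developed in Section \ref{s43}; the CLT transfer is then automatic, since the discrepancies $\tilde\theta_{\eta}-\theta_{\eta}$ are bounded in $L^2$ and thus negligible after normalization by $\sqrt{\tfrac{3}{2}\log(1/\eta)}$.
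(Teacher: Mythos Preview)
Your high-level plan---reduce to Theorem \ref{t11} by showing the leading-order winding is insensitive to the boundary data---matches the paper's, but the technical route via bounded Radon--Nikodym derivatives has a genuine gap. A Radon--Nikodym derivative bounded away from $0$ and $\infty$ only yields that $E'[\tilde\theta_{\eta}^{\,2}]$ and $E[\theta_{\eta}^{2}]$ are comparable up to multiplicative constants, not that they agree to leading order. Concretely, writing $E[\theta_\eta^2]=E'[\theta_\eta^2\cdot(dP/dP')]$, the discrepancy $E'[\theta_\eta^2\cdot(dP/dP')]-E'[\theta_\eta^2]$ is governed by the correlation between $\theta_\eta^2$ (which is of order $\log(1/\eta)$) and the fluctuations of $dP/dP'$; without further input this is $O(\log(1/\eta))$, not $o(\log(1/\eta))$. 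Your fixed decoupling scale $A(1/2,1)$ compounds the problem: nothing $\eta$-small emerges from it, so even the coupling failure probability would be a fixed constant rather than tending to zero.

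The paper bypasses Radon--Nikodym comparisons and instead applies the coupling of Proposition \ref{p3} directly: $P[\cdot\mid\mathcal A^{\eta}]$ and $P[\cdot\mid\mathcal A_2^{\eta}(\eta,1)]$ are coupled so that with probability at least $1-\eta^{\beta/3}$ there are identical good faces $\Theta\subset A^{\eta}(\eta^{1/3},1)$ and the configurations in $\overline\Theta$ agree \emph{exactly}. On that event, the arm and the exploration path coincide inside $\Theta$, and the residual winding difference is controlled by crossing counts in the thin sector $R^{\eta}(\eta^{1/3},1)$, exactly as in the proof of Lemma \ref{l17}; combining this with Cauchy--Schwarz and the fourth-moment bound (\ref{e75}) gives $\big|E[\theta_\eta^2]-E'[\tilde\theta_{\eta,1}^{\,2}]\big|\le C[\log(1/\eta)]^{6/7}$. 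The intermediate scale $\eta^{1/3}$ (not $1/2$) is what makes the coupling failure probability polynomially small while keeping the winding in the leftover region sublinear in $\log(1/\eta)$. For $\nu_2^{\eta}$ the same estimate is proved uniformly over $P[\cdot\mid\mathcal A_2^{\eta}(\eta,n)]$, $n\ge 1$, and one lets $n\to\infty$, which is in the spirit of your second step. Finally, the CLT under $P[\cdot\mid\mathcal A_2^{\eta}]$ and under $\nu_2^{\eta}$ is not re-derived here but imported from \cite{24} (Theorem 1.1 and Corollary 1.5 there); the variance comparison then pins down the normalizing constant $3/2$.
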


\begin{remark}
Corollary \ref{c4} confirms a prediction of Beffara and Nolin
\cite{2} for the 2-arm case explicitly.  Following \cite{24} (see
Theorem 1.1 and Remark 1.2 in \cite{24}), we give the following
conjecture for the 4-arm case:
\begin{conjecture}
Under $P[\cdot|\mathcal {A}^{\eta}_4(\eta,1)]$ and $\nu_4^{\eta}$,
as $\eta\rightarrow 0$ we both have
\begin{equation*}
Var\left[\tilde{\theta}_{\eta}\right]=\left(\frac{3}{8}+o(1)\right)\log\left(\frac{1}{\eta}\right)~~\mbox{
and }~~
\frac{\tilde{\theta}_{\eta}}{\sqrt{\frac{3}{8}\log\left(\frac{1}{\eta}\right)}}\rightarrow_{d}N(0,1).
\end{equation*}
\end{conjecture}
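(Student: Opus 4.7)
The plan is to mirror the architecture of the 2-arm proof (Theorem \ref{t11} and Corollary \ref{c4}), replacing the two-sided radial SLE$_6$ that drove that argument with an appropriate 4-arm analog. The 2-arm proof factored into three stages: (a) identify the scaling limit of the discrete exploration conditioned on $\mathcal{A}^{\eta}_2$; (b) extract the constant $3/2$ from a second-moment computation for that continuum object; (c) couple discrete to continuum and transfer the result from the conditional measure to $\nu_2^{\eta}$. Each stage has a 4-arm counterpart, and the first is by far the most delicate.

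For stage (a), I would identify the continuum limit of the exploration associated with the distinguished blue arm under $P[\cdot|\mathcal{A}_4^{\eta}(\eta,1)]$ as a radial SLE$_6$-type curve conditioned to arrive at $0$ together with three additional alternating-color arms, consistent with the topological description (\ref{e13}) of $\mathcal{A}_4^{\eta}$ in terms of two counterclockwise loops through the annulus. Concretely, I expect the driving function to be a Bessel-type process of strictly larger index than the one governing two-sided radial SLE$_6$, reflecting the stronger repulsion imposed by requiring four alternating arms to survive to the origin. Convergence of the discrete 4-arm exploration to this continuum process should then be established by iterating and strengthening the coupling arguments of Sections \ref{s42} and \ref{s44}, exploiting the 4-arm IIC scaling limit $\mu_{4,D,z}$ constructed in Theorem \ref{t6} as the target measure.

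For stage (b), the constant $3/8$ should emerge from the analog of the Bessel identity used to produce $3/2$ in the 2-arm case. The heuristic $3/8=(3/2)/4$ suggests that forcing four mutually winding alternating arms to coexist divides the angular fluctuations of any single arm by $4$ compared with the 2-arm case; making this rigorous reduces to computing the quadratic variation of the 4-arm driving function on dyadic scales. Stage (c) then replicates Sections \ref{s43}--\ref{s45}: generalized quasi-multiplicativity for $\mathcal{A}_4^{\eta}$ yields moment bounds on the discrete winding, decorrelation of winding across dyadic annuli gives the CLT by the Lindeberg method, and a comparison of $\mathcal{A}_4^{\eta}(\eta,1)$ with $\mathcal{A}_4^{\eta}(\eta,R)$ inside $\mathbb{D}$ transfers the winding statistics from $P[\cdot|\mathcal{A}_4^{\eta}(\eta,1)]$ to $\nu_4^{\eta}$, exactly as in the proof of Corollary \ref{c4}.

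The main obstacle is stage (a). Two-sided radial SLE$_6$ is a classical object with an explicit driving-function SDE, whereas the 4-arm analog is nonstandard: one must either describe the exploration as a radial SLE$_6$ with several force points tracking the three spectator arms, or bypass SLE altogether and compute winding moments directly inside CLE$_6$ conditioned on four alternating arms at $0$ (i.e., inside $\mu_{4,D,z}$). Either route requires genuinely new second-moment estimates for the winding at the pinch point that do not reduce cleanly to the two-sided radial SLE$_6$ calculation underlying Theorem \ref{t11}. A secondary difficulty is that $\mathcal{A}_4^{\eta}(\eta,1)$ has much smaller probability than $\mathcal{A}_2^{\eta}$ (polynomial in $\eta$ with a larger arm exponent), so the uniform coupling errors tolerated in the 2-arm case must be tightened considerably before they can be absorbed against $P[\cdot|\mathcal{A}_4^{\eta}]$; this will likely force a sharper version of the decorrelation estimates in Section \ref{s44}.
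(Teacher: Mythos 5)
This statement is stated in the paper as a \emph{conjecture}, not a theorem: the paper offers no proof of it, only the remark that one would first need to generalize two-sided radial SLE to a ``$2k$-sided radial SLE'' before the method of Theorem \ref{t11} could be applied. So there is no proof in the paper against which to check you, and your proposal should be judged on whether it actually closes the problem. It does not. Your stage (a) --- constructing the continuum object that plays the role of two-sided radial SLE$_6$ for four alternating arms, writing down its driving-function SDE, and proving convergence of the conditioned discrete exploration to it --- is precisely the missing ingredient that causes the paper to leave this as a conjecture, and you acknowledge in your final paragraph that this step requires ``genuinely new second-moment estimates'' not available from the existing machinery. A plan whose central step is declared open is not a proof.

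Two further points. First, the constant $3/8$ in your stage (b) is obtained only by the heuristic $3/8=(3/2)/4$; in the 2-arm case the constant $3/2=\kappa/4$ with $\kappa=6$ comes out of an explicit computation (Lemma \ref{l13}) on the quadratic variation of the driving Brownian motion of two-sided radial SLE$_6$ via the representation $\theta_\kappa(\tau_\epsilon)\approx-\tfrac{\sqrt{\kappa}}{2}W_{\tau_\epsilon}$ in the radial parametrization. Without the SDE for the 4-arm conditioned process you have no analog of (\ref{e27})--(\ref{e28}) and hence no derivation of the variance rate; ``dividing by $4$'' is an assertion, not an argument. Second, even granting the continuum input, the identification of $\tilde{\theta}_\eta$ under $\mathcal{A}_4^{\eta}(\eta,1)$ with the winding of a single exploration-type curve is less clean than in the 2-arm case: the event $\mathcal{A}^{\eta}$ used in Theorem \ref{t11} is exactly the event that one interface $\gamma^{\eta}_{\mathbb{D},1,-1}$ hits $\eta H_0$, whereas the 4-arm event involves two interfaces (cf.\ (\ref{e13}) and the $U_{\mathcal{C}}$ bookkeeping in Proposition \ref{l2}), so the analog of the reduction from $\tilde{\theta}_{\eta}$ to $\theta_{\eta}$ carried out in (\ref{e76}) would itself need a new coupling statement. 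Your outline of stage (c) is plausible and does mirror Sections \ref{s43}--\ref{s45}, but it cannot be executed until stages (a) and (b) exist.
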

\end{remark}

\begin{remark}
If one can generalize the results for two-sided radial SLE that we
used in this paper to ``$2k$-sided radial SLE", it is expected that
one can use our method to get precise estimate of the winding number
variance for the $2k$-arm case, and get the corresponding CLT.
\end{remark}

\subsubsection{Ideas of the proofs}

Let us explain the main ideas in the proofs of our main results.

\emph{Scaling limits.}  First, we use the approach of
Aizenman-Burchard \cite{33} to show that the $k$-arm IIC has
subsequential scaling limit.  Then, conditioned on the $k$-arm
events for a sequence of annuli, we introduce conditional measures
for percolation and CLE$_6$.  Using these measures, by coupling
argument introduced in \cite{5} and Theorem \ref{t2}, we establish
the uniqueness of the scaling limit.  The conformal invariance of
the scaling limit can be derived from that of CLE$_6$ easily.

\emph{Winding numbers.}  The proof can be divided into three main
steps as follows.
\begin{itemize}
\item First, we use the approach of Schramm \cite{47}
to derive the winding number variance of two-sided radial SLE$_6$.
\item Second, conditioned on the event that the percolation
exploration path in $\mathbb{D}^\eta$ goes through $\eta H_0$, we
show the scaling limit of the path is two-sided radial SLE$_6$.  The
key ingredients include a proposition of Green's function for
chordal SLE proved by Lawler and Rezaei \cite{51}, the coupling
argument and the well-know result that the scaling limit of
percolation exploration path is SLE$_6$.
\item Third, we divide the unit
disk into concentric annuli with large modulus, and show that the
sum of winding number variances of the paths in these annuli
approximates the variance of $\theta_{\eta}$, and the winding number
variance corresponding to each annulus can be approximated well by
that of two-sided radial SLE$_6$ as $\eta\rightarrow 0$.  This step
involves many technical issues and uses coupling argument
extensively.  A key ingredient is the estimate of winding number
variance of the arms from \cite{24}.
\end{itemize}

\section{Preliminary definitions and results}

\subsection{The space of curves}\label{s21}

When taking the scaling limit of percolation on the whole plane, it
is convenient to compactify $\mathbb{C}$ into
$\hat{\mathbb{C}}:=\mathbb{C}\cup\{\infty\}\simeq \mathbb{S}^2$
(i.e., the Riemann sphere) as follows.  First, we replace the
Euclidean metric with a distance function $\Delta(\cdot,\cdot)$
defined on $\mathbb{C}\times\mathbb{C}$ by
\begin{equation}\label{e9}
\Delta(u,v):=\inf_{\varphi}\int(1+|\varphi|^2)^{-1}ds,
\end{equation}
where the infimum is over all smooth curves $\varphi(s)$ joining $u$
with $v$, parameterized by arclength $s$, and $|\cdot|$ denotes the
Euclidean norm.  This metric is equivalent to the Euclidean metric
in bounded regions.  Then, we add a single point $\infty$ at
infinity to get the compact space $\hat{\mathbb{C}}$ which is
isometric, via stereographic projection, to the two-dimensional
sphere.

Let $D$ be a Jordan domain and denote by $\mathcal {S}_{D}$ the
complete separable metric space of continuous curves in
$\overline{D}$ with the metric (\ref{e1}) defined below.  Curves are
regarded as equivalence classes of continuous functions from the
unit interval to $\overline{D}$, modulo monotonic
reparametrizations.  $\mathcal {F}$ will represent a set of curves
(more precisely, a closed subset of $\mathcal {S}_{D}$).
$\textrm{d}(\cdot,\cdot)$ will denote the uniform metric on curves,
defined by
\begin{equation}\label{e1}
\textrm{d}(\gamma_1,\gamma_2):=\inf\sup_{t\in[0,1]}|\gamma_1(t)-\gamma_2(t)|,
\end{equation}
where the infimum is over all choices of parametrizations of
$\gamma_1$ and $\gamma_2$ from the interval $[0,1]$.  The distance
between two closed sets of curves is defined by the induced
Hausdorff metric as follows:
\begin{equation}\label{e2}
\dist(\mathcal {F},\mathcal {F}'):=\inf\{\epsilon>0:\forall
\gamma\in \mathcal {F},\exists \gamma'\in\mathcal {F}'\mbox{ such
that }\textrm{d}(\gamma,\gamma')\leq\epsilon\mbox{ and vice
versa}\}.
\end{equation}
The space $\Omega_D$ of closed subsets of $\mathcal {S}_{D}$ (i.e.,
collections of curves in $\overline{D}$) with the metric (\ref{e2})
is also a complete separable metric space.  Write
$\Omega_R:=\Omega_{\mathbb{D}_R}$.

We will also consider the complete separable metric space $\mathcal
{S}$ of continuous curves in $\hat{\mathbb{C}}$ with the distance
\begin{equation}\label{e3}
\textrm{D}(\gamma_1,\gamma_2):=\inf\sup_{t\in[0,1]}\Delta(\gamma_1(t),\gamma_2(t)),
\end{equation}
where the infimum is again over all choices of parametrizations of
$\gamma_1$ and $\gamma_2$ from the interval $[0,1]$.  The distance
between two closed sets of curves is again defined by the induced
Hausdorff metric as follows:
\begin{equation}\label{e4}
\textrm{Dist}(\mathcal {F},\mathcal {F}'):=\inf\{\epsilon>0:\forall
\gamma\in \mathcal {F},\exists \gamma'\in\mathcal {F}'\mbox{ such
that }\textrm{D}(\gamma,\gamma')\leq\epsilon\mbox{ and vice
versa}\}.
\end{equation}
The space $\Omega$ of closed sets of $\mathcal {S}$ (i.e.,
collections of curves in $\hat{\mathbb{C}}$) with the metric
(\ref{e4}) is also a complete separable metric space.

It was noted in \cite{3,32} that one should add a ``trivial" loop
for each $z$ in $\overline{D}$, so that the collection of CLE$_6$
loops is closed in the appropriate sense \cite{33}.  When
considering the CLE$_6$ in $\hat{\mathbb{C}}$, one should also add a
trivial loop for each $z\in \hat{\mathbb{C}}$ to make the space of
loops closed.  In this paper, we will not include these trivial
loops to the loop process except for dealing with this technical
problem.

\subsection{Coupling argument}\label{s22}
The coupling argument for 1-arm events appeared in \cite{16} for the
construction of IIC, and then the coupling argument for multi-arm
events appeared in \cite{4} for the construction of multi-arm IIC.
Recently, Garban, Pete and Schramm \cite{5} introduced the notion of
faces, and gave the coupling argument in a clear and general form,
which turns out to be very useful.  For example, we used it in
\cite{24} to prove a CLT for the winding numbers of the arms with
alternating colors.  In this paper, we will make extensive use of
coupling argument.  Being familiar with it in \cite{5} and Lemma 2.3
in \cite{24} will be helpful to the readers. First, let us state the
coupling argument that will be used in Section \ref{s3} for $k$-arm
IIC.  To state the result, we need some definitions.

Let $k$ be an even number.  For a circuit $\mathcal
{C}=\gamma_1\gamma_2\ldots\gamma_k$ (i.e., the concatenation of
$\gamma_1,\ldots,\gamma_k$), if $\gamma_1,\ldots,\gamma_k$ are
monochromatic paths with alternating colors, we call $\mathcal {C}$
a \emph{$k$-circuit}, and write $\mathcal
{C}=(\gamma_1,\ldots,\gamma_k)$.  We will always assume that
$\gamma_1$ is blue.  For convenience, a monochromatic blue circuit
is called a \emph{1-circuit}.  For any 4-circuit $\mathcal
{C}=(\gamma_1,\ldots,\gamma_4)$, denote by $U=U_{\mathcal {C}}$ the
indicator function of the event that there exists a blue path
connecting $\gamma_1$ and $\gamma_3$ in $\overline{\mathcal {C}}$
(recall that $\overline{\mathcal {C}}=\mathcal {C}\cup\mbox{
interior sites of }\mathcal {C}$).  Note that $U_{\mathcal {C}}=0$
if and only if there exists a yellow path connecting $\gamma_2$ and
$\gamma_4$ in $\overline{\mathcal {C}}$.

The proofs of the following coupling arguments (which are different
versions of the coupling arguments in \cite{5}) are essentially the
same as those of Proposition 3.1, 3.6 and 5.2 in \cite{5} (see also
the sketch of the proof of Lemma 2.3 in \cite{24}), we omit the
proofs of Proposition \ref{l2} and \ref{l3} except just stating how
to deal with an additional issue in the case of $k=4$ in Proposition
\ref{l2}.

\begin{proposition}\label{l2}
Let $k=1,2,4$. There exists a constant $\alpha=\alpha(k)>0$, such
that for any $10\eta<r<R/100$ and $2r\leq r'\leq R$, there is a
coupling of the measures $P[\cdot|\mathcal {A}_k^{\eta}(\eta,R)]$
and $P[\cdot|\mathcal {A}_k^{\eta}(r,R)]$, such that with
probability at least $1-(r/r')^{\alpha}$ there exists an identical
$k$-circuit $\mathcal {C}$ surrounding the origin in
$A^{\eta}(r,r')$ for both measures, and the configuration outside
$\mathcal {C}$ is also identical, and furthermore $U_{\mathcal {C}}$
is identical in the case $k=4$.
\end{proposition}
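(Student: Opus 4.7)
The plan is to adapt the explore-from-outside coupling of Garban--Pete--Schramm [5], as the author indicates. The goal is to reveal, under each of the two conditional measures, an innermost $k$-circuit $\mathcal{C}^{\ast}$ in $A^{\eta}(r,r')$ together with the configuration outside it, and then use separation of arms plus quasi-multiplicativity to match these ``exterior'' pieces with error $(r/r')^{\alpha}$.

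Concretely, let $\mathcal{C}^{\ast}$ denote the innermost $k$-circuit (in the paper's sense, with alternating colors when $k=2,4$) surrounding the origin in $A^{\eta}(r,r')$, when it exists, and let $G$ denote the event that $\mathcal{C}^{\ast}$ exists and that the $k$ outer arms emanating from $\mathcal{C}^{\ast}$ to $\partial_2 A^{\eta}(r,R)$ have well-separated landing pads on $\mathcal{C}^{\ast}$, in the sense of the standard separation-of-arms lemma. Standard arm-event estimates (RSW together with quasi-multiplicativity) give both $P[G^c\mid \mathcal{A}_k^{\eta}(\eta,R)]$ and $P[G^c\mid \mathcal{A}_k^{\eta}(r,R)]$ bounded by $(r/r')^{\alpha}$ for a suitable $\alpha>0$. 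On $G$, I would explore inward from $\partial\mathbb{D}_R^{\eta}$ until $\mathcal{C}^{\ast}$ is found, revealing $\mathcal{C}^{\ast}$ and the entire configuration outside it. For each of the two conditional measures the law of this exterior has a density against the unconditioned percolation law proportional to $P[\text{inner arm event}\mid\text{exterior}]$, where the inner arm event is $k$ disjoint arms from $\eta H_0$ (resp.\ from radius $r$) into $\mathcal{C}^{\ast}$, of colors matching the landing pads. The separation of arms on $\mathcal{C}^{\ast}$ (guaranteed by $G$) combined with two-sided quasi-multiplicativity makes each of these two inner probabilities factor as a function of the landing pads times $P[\mathcal{A}_k^{\eta}(\eta,r)]$ (resp.\ its analogue), up to multiplicative error $1+O((r/r')^{\alpha})$. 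Thus the ratio of the two exterior densities is nearly a constant on $G$, so the two exterior laws can be coupled to agree with probability at least $1-(r/r')^{\alpha}$; once the exterior agrees, the two interiors are sampled independently from their respective inner conditional laws, producing a valid coupling of the two full measures.

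For $k=4$ one additionally needs $U_{\mathcal{C}^{\ast}}$ to match. Since $U_{\mathcal{C}^{\ast}}$ is measurable with respect to the interior of $\mathcal{C}^{\ast}$, I would enlarge the ``exterior data'' to be matched by also revealing, on $G$, the value of $U_{\mathcal{C}^{\ast}}$. Given $\mathcal{C}^{\ast}$ and its separated landing pads, the conditional probability of $\{U_{\mathcal{C}^{\ast}}=1\}$ under either of the two inner arm conditionings can again, via separation and quasi-multiplicativity, be written as a function of the landing pads only, up to the same multiplicative error $1+O((r/r')^{\alpha})$. Hence $U_{\mathcal{C}^{\ast}}$ contributes essentially the same factor to both exterior densities and can be matched within the same error budget. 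The main obstacle I expect is precisely this last point: proving that the conditional law of $U_{\mathcal{C}^{\ast}}$ is asymptotically insensitive to the distinction between ``inner arms originate at $\eta$'' and ``inner arms originate at radius $r$'', which amounts to checking that the alternating interior connection pattern is determined, up to negligible error, by the landing pads on $\mathcal{C}^{\ast}$ rather than by the fine scale at which the inner arm event is specified.
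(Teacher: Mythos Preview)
Your main coupling plan follows the Garban--Pete--Schramm scheme referenced by the paper, so that part is aligned (modulo the minor slip that exploring inward from $\partial\mathbb{D}_R^{\eta}$ reveals the \emph{outermost} $k$-circuit first, not the innermost; the GPS argument actually iterates over dyadic scales, but the essential mechanism you describe---matching exterior laws via separation and quasi-multiplicativity---is the right one).

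Where your proposal diverges from the paper is in handling $U_{\mathcal{C}}$ for $k=4$, and here the paper's argument is considerably simpler than yours. You propose to show that the conditional law of $U_{\mathcal{C}^\ast}$ given the exterior is asymptotically the same whether the inner arms originate at scale $\eta$ or at scale $r$; you correctly flag this as the main obstacle, and indeed carrying it out would require a further separation argument inside $\mathcal{C}^\ast$. The paper avoids this entirely by exploiting a special feature of the measure $P[\cdot\mid\mathcal{A}_4^{\eta}(\eta,R)]$: the color of the central hexagon $\eta H_0$ is an independent fair coin (the event $\mathcal{A}_4^{\eta}(\eta,R)$ does not depend on it), and on the $4$-arm event this single bit determines $U_{\mathcal{C}_1}$ exactly, since the four arms all meet at $\eta H_0$ and its color decides which opposite pair of faces is connected through the interior. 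The coupling of exterior configurations can therefore be built without ever looking at this bit, and once the coupling succeeds one simply sets the color of $\eta H_0$ to match $U_{\mathcal{C}_2}$; global color symmetry of the construction ensures this still has the correct marginal. So the paper gets exact matching of $U_{\mathcal{C}}$ on the good event at no additional probabilistic cost, whereas your route would only give approximate matching and requires the extra lemma you identified. Your approach would be the natural one in settings lacking such a free bit, but here the paper's trick is both shorter and sharper.
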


\begin{proof}
As we have said before Proposition \ref{l2}, we only deal with the
additional issue for $U_{\mathcal {C}}$ in the case $k=4$. Similarly
to the proofs of Proposition 3.6 in \cite{5} and Lemma 2.3 in
\cite{24}, one can construct a coupling of the measures
$P[\cdot|\mathcal {A}_4^{\eta}(\eta,R)]$ and $P[\cdot|\mathcal
{A}_4^{\eta}(r,R)]$, such that with probability at least
$1-(r/r')^{\alpha}$ the following event $\mathcal {B}$ occurs: There
exists an identical $k$-circuit $\mathcal {C}$ surrounding the
origin in $A^{\eta}(r,r')$ for both measures, and the configuration
outside $\mathcal {C}$ is also identical.  Denote by $\mathcal
{C}_1$ (resp. $\mathcal {C}_2$) the $k$-circuit $\mathcal {C}$ under
$P[\cdot|\mathcal {A}_4^{\eta}(\eta,R)]$ (resp. $P[\cdot|\mathcal
{A}_4^{\eta}(r,R)]$).  Further, the above construction is symmetric
for the colors, so conditioned on $\mathcal {B}$,  the probability
of $U_{\mathcal {C}_2}=1$ equals to that of $U_{\mathcal {C}_2}=0$.
Note that the color of the hexagon $\eta H_0$ under
$P[\cdot|\mathcal {A}_4^{\eta}(\eta,R)]$ is essentially irrelevant
to the construction of the coupling.  Hence, if $\mathcal {B}$
occurs, one can let the hexagon be blue if $U_{\mathcal {C}_2}=1$,
and yellow if $U_{\mathcal {C}_2}=0$; otherwise we toss a coin to
determine the color.  Then under this new coupling one has
$U_{\mathcal {C}_1}=U_{\mathcal {C}_2}$.
\end{proof}

\begin{proposition}\label{l3}
Let $k=1,2,4$.  There exists a constant $\alpha=\alpha(k)>0$, such
that for any $100\eta<R_1<R_2$ and $10\eta<r<R_1/2$, there is a
coupling of $P[\cdot|\mathcal {A}_k^{\eta}(\eta,R_1)]$ and
$P[\cdot|\mathcal {A}_k^{\eta}(\eta,R_2)]$, so that with probability
at least $1-(r/R_1)^{\alpha}$ there exists an identical $k$-circuit
$\mathcal {C}$ surrounding the origin in $A^{\eta}(r,R_1)$ for both
measures, and the configuration inside $\mathcal {C}$ is also
identical.
\end{proposition}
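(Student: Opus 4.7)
The plan is to adapt the outside-to-inside exploration used in Propositions 3.1 and 5.2 of \cite{5} and in the proof of Proposition \ref{l2}. The two conditional measures $P[\,\cdot\,|\,\mathcal{A}_k^\eta(\eta,R_1)]$ and $P[\,\cdot\,|\,\mathcal{A}_k^\eta(\eta,R_2)]$ differ only in the outer radius reached by the $k$ arms, so if a $k$-circuit $\mathcal{C}$ surrounding the origin in $A^\eta(r,R_1)$ can be identified by revealing only information outside it, then on both measures the residual configuration inside $\mathcal{C}$ is simply percolation in $\overline{\mathcal{C}}$ conditioned on $k$ alternating arms of the matching colors from the origin to $\mathcal{C}$. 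This inside law is independent of $R_1$ and $R_2$, so the two measures can be coupled identically inside $\mathcal{C}$; in particular, for $k=4$ the quantity $U_{\mathcal{C}}$ is automatically matched and no extra coin toss, as in the proof of Proposition \ref{l2}, is needed.

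Concretely, I would partition $A^\eta(r,R_1/2)$ into dyadic subannuli $A_j^\eta := A^\eta(2^j r, 2^{j+1} r)$ for $j=0,1,\ldots,J$ with $J$ of order $\log_2(R_1/r)$, and perform an outside-in exploration under each conditional measure. At each scale $j$ (from large to small) the exploration attempts to locate the outermost $k$-circuit $\mathcal{C}_j$ in $A_j^\eta$, with the prescribed alternating colors, whose monochromatic pieces extend outward through matching $k$ arms to the previously-explored region. Driving both explorations with a shared source of randomness exactly as in the cited proofs produces the same $\mathcal{C}_j$ and the same outer configuration as soon as one scale succeeds. By RSW and the generalized quasi-multiplicativity of Section \ref{s23}, the conditional success probability at each scale, given the outer exploration, is bounded below by a uniform constant $c=c(k)>0$. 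Consequently the probability of failure at all $J$ scales is at most $(1-c)^J \leq C (r/R_1)^\alpha$ for $\alpha = -\log(1-c)/\log 2$, and on the complementary event the insides are coupled identically as described above.

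The main obstacle is establishing the uniform lower bound $c$ on the success probability at each scale. This amounts to verifying the generalized quasi-multiplicativity
\begin{equation*}
P(\mathcal{A}_k^\eta(\eta,R_i) \,|\, \mathcal{F}_{\mathrm{ext}}) \asymp P(\mathcal{A}_k^\eta(\eta, 2^{j+1} r)) \cdot \mathbf{1}_{\mathcal{F}_{\mathrm{ext}} \text{ carries compatible } k \text{ arms to } \partial \mathbb{D}_{R_i}^\eta},
\end{equation*}
with constants uniform in $\eta$, $j$, $R_i$, and in the revealed outer $\sigma$-algebra $\mathcal{F}_{\mathrm{ext}}$; once this is in hand, the RSW estimate for an innermost $k$-circuit in $A_j^\eta$ can be reweighted uniformly under the conditioning. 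The remaining technicalities---controlling the alternating-color constraint in the innermost-circuit exploration, and verifying the domain Markov reduction inside $\mathcal{C}$---are handled exactly as in \cite{5} and the sketch of Lemma 2.3 in \cite{24}.
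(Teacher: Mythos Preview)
Your proposal is correct and follows essentially the approach the paper has in mind: the paper omits the proof of Proposition~\ref{l3} and simply refers to Propositions~3.1, 3.6 and 5.2 of \cite{5} (and Lemma~2.3 of \cite{24}), which is precisely the outside-to-inside dyadic exploration you describe. Your observation that for $k=4$ the quantity $U_{\mathcal{C}}$ is automatically matched here (because it is a function of the interior configuration, which is being coupled) and hence no extra coin toss is required, in contrast to Proposition~\ref{l2}, is correct and worth noting.
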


Now, we want to give the coupling argument that will be used in
Section \ref{s4} for winding numbers.  Following the terminology of
\cite{5,24}, we first introduce the notion of faces.  Let $x_1,x_2$
be distinct e-vertices in $\partial\mathbb{D}_R^{\eta}$. Let
$\gamma_1$ be a blue path of hexagons joining $x_1$ to $x_2$ and let
$\gamma_2$ be a yellow path of hexagons joining $x_2$ to $x_1$.
Denote by $\Theta=(\gamma_1,\gamma_2)$ the circuit which is composed
of the two paths.  We assume furthermore that
$\mathbb{D}_R^{\eta}\subset\mbox{ interior of }\Theta$. Then we call
the circuit $\Theta$ a configuration of \emph{faces} with endpoints
$x_1,x_2$, and say $\Theta$ are faces around $\partial
\mathbb{D}_R^{\eta}$.  Define the \emph{quality} of a configuration
of faces $Q(\Theta)$ to be the distance between the endpoints,
normalized by $R$. That is,
$$Q(\Theta):=\frac{|x_1-x_2|}{R}.$$

Let $Cone_1:=\{z\in\mathbb{C}:-3\pi/4<\arg(z)<3\pi/4\}$,
$Cone_2:=\{z\in\mathbb{C}:\:\pi/4<\arg(z)<7\pi/4\}$,
$Cone_3:=\{z\in\mathbb{C}:-\pi/4<\arg(z)<\pi/4\}$,
$Cone_4:=\{z\in\mathbb{C}:\:3\pi/4<\arg(z)<5\pi/4\}$.

In the annulus $A=A^{\eta}(R,2R)$, let $\mathcal {R}=\mathcal
{R}(A)$ be the event that there are exactly two disjoint alternating
arms crossing $A$, and the resulting two interfaces are contained
respectively in $Cone_1$ and $Cone_2$, with the endpoints of the
interfaces on the two boundaries of $A$ belonging to $Cone_3$ and
$Cone_4$, respectively.

Lemma \ref{l21} is the straightforward 2-arm analog of Lemma 2.2 in
\cite{24}.  The proof is analogous to the second proof of Lemma 3.4
in \cite{5}, we leave it to the reader.
\begin{lemma}\label{l21}
$P(\mathcal {R}(A^{\eta}(R,2R)))>C$ for an absolute constant $C>0$.
\end{lemma}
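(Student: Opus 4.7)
The plan is to mimic the second proof of Lemma 3.4 in \cite{5}, exactly as Lemma 2.2 in \cite{24} did for the 4-arm case. The idea is to exhibit an explicit collection of monochromatic crossings of polar sub-rectangles of $A^\eta(R,2R)$ whose joint occurrence forces $\mathcal R(A^\eta(R,2R))$ and whose joint probability is bounded below by an absolute constant, using standard RSW bounds on $\mathbb{T}$ combined with the FKG inequality.

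Fix a small $\delta>0$ and partition $A^\eta(R,2R)$ into four polar sub-rectangles: a top rectangle $\mathcal T$ of angular range $(\pi/4+\delta,\,3\pi/4-\delta)$, a bottom rectangle $\mathcal B$ of angular range $(-3\pi/4+\delta,\,-\pi/4-\delta)$, and two side rectangles $\mathcal S_r,\mathcal S_\ell$ covering the angular arcs around $Cone_3$ and $Cone_4$. I impose: (i) a blue radial crossing of $\mathcal T$ from $\partial_1 A^\eta(R,2R)$ to $\partial_2 A^\eta(R,2R)$; (ii) a yellow radial crossing of $\mathcal B$; (iii) inside $\mathcal S_r$, two small blue arcs joining the endpoints of the crossing in (i) to the two boundary arcs of $A^\eta(R,2R)$ inside the upper half of $Cone_3$, together with two yellow arcs joining the endpoints of the crossing in (ii) to the boundary arcs inside the lower half of $Cone_3$; (iv) the symmetric configuration of arcs in $\mathcal S_\ell$. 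Each of these events is a crossing of a polar (sub-)rectangle of bounded aspect ratio, hence has probability bounded below by an absolute constant by RSW. The blue events are increasing, the yellow events decreasing, and disjointness of the sub-rectangles allows FKG together with independence across disjoint pieces to yield a uniform positive lower bound on the probability of the intersection.

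On this joint event, the crossings in (i) and (ii) realize the two alternating arms of $\mathcal A_2^\eta(R,2R)$; the arcs in (iii) force the blue cluster of the top arm to touch $\partial_1 A^\eta(R,2R)$ and $\partial_2 A^\eta(R,2R)$ along the upper half of $Cone_3$, and the yellow cluster of the bottom arm to touch them along the lower half, so the unique color transition on each boundary circle inside $\mathcal S_r$ lies in $Cone_3$; symmetrically for $\mathcal S_\ell$. This pins the four interface endpoints into $Cone_3\cup Cone_4$, and, since the blue region on top and the yellow region on the bottom are each connected all the way across, the two resulting interfaces are trapped in $Cone_1$ and $Cone_2$ respectively; in particular exactly two interfaces appear, so $\mathcal R$ holds. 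The main technical point, as in \cite{24}, is to specify the auxiliary arcs in (iii)--(iv) precisely enough to pin all four endpoints into the correct cones without producing any additional interface or spoiling the FKG combination; once this bookkeeping is done, the RSW lower bound on each individual crossing is entirely standard.
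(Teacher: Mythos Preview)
Your approach is exactly the one the paper indicates: it cites the second proof of Lemma~3.4 in \cite{5} (and Lemma~2.2 in \cite{24}) and leaves the details to the reader, and your RSW/FKG construction on polar sub-rectangles is precisely that argument specialized to the $2$-arm setting. The only point that deserves care---and you flag it yourself---is arranging the auxiliary arcs in (iii)--(iv) in disjoint angular sub-regions so that the blue and yellow pieces can be combined (via independence on disjoint boxes together with FKG within each color, or via generalized FKG as in \cite{19}) and so that the resulting blue and yellow clusters each block any additional radial arm on their side, forcing \emph{exactly} two interfaces rather than at least two.
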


\begin{figure}
\begin{center}
\includegraphics[height=0.4\textwidth]{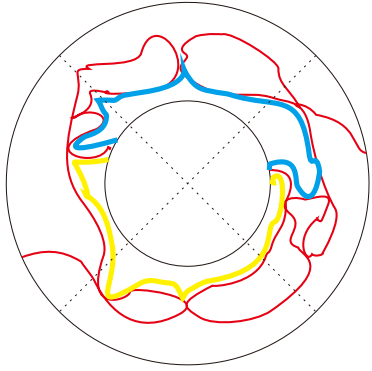}
\caption{Two interfaces crossing the annulus induce a natural
configuration of good faces.}\label{fig1}
\end{center}
\end{figure}

For $A^{\eta}(R,2R)$, if the event $\mathcal {R}$ happens, then the
two interfaces induce a natural configuration of faces
$\Theta\subset A^{\eta}(R,2R)$ around $\partial
\mathbb{D}_R^{\eta}$.  We call $\Theta$ \emph{good faces} around
$\partial \mathbb{D}_R^{\eta}$.  See Figure \ref{fig1}.

For $\eta\leq r<R$ and faces $\Theta=(\gamma_1,\gamma_2)$ around
$\partial \mathbb{D}_R^{\eta}$, define
\begin{equation*}
\mathcal {A}_{\Theta}^{\eta}(r,R):=\left\{
\begin{aligned}
&\mbox{$\exists$ a blue arm
connecting $\gamma_1$ to $\partial \mathbb{D}_r^{\eta}$ and}\\
&\mbox{a yellow arm connecting $\gamma_2$ to $\partial
\mathbb{D}_r^{\eta}$}
\end{aligned}
\right\}.
\end{equation*}

Let us now define a measure $P_R^*[\cdot]$ as follows.  First, we
sample good faces $\Theta$ around $\partial \mathbb{D}_R^{\eta}$
according to the law $P[\cdot|\mathcal {R}]$; then conditioning on
$\Theta$, we sample the configuration inside $\Theta$ according to
$P[\cdot|\mathcal {A}_{\Theta}(\eta,R)]$.  This induces a
probability measure on good faces around $\partial
\mathbb{D}_R^{\eta}$ and the configuration inside the good faces,
denoted by $P_R^*$.

For $\eta<R$, denote by $\mathcal {A}^{\eta}(R)$ the event that the
percolation exploration path $\gamma_{\mathbb{D}_R,R,-R}^{\eta}$
intersects with the hexagon $\eta H_0$.  Note that $\mathcal
{A}^{\eta}(1)=\mathcal {A}^{\eta}$.

The proofs of the following coupling results are very similar to
those of Proposition 3.1 and 3.6 in \cite{5} (see also Lemma 2.3 in
\cite{24}), which are omitted here.
\begin{proposition}\label{p3}
There exists a constant $\beta>0$, such that for all
$\eta<1/100,10\eta<r<R/2$ and $R\leq 1$, there is a coupling of the
measures $P[\cdot|\mathcal {A}^{\eta}]$, $P[\cdot|\mathcal
{A}^{\eta}(R)]$ and $P[\cdot|\mathcal {A}_2^{\eta}(\eta,R)]$, so
that with probability at least $1-(r/R)^{\beta}$ there exist
identical good faces $\Theta\subset A^{\eta}(r,R)$ for these three
measures, and the configuration in $\overline{\Theta}$ is also
identical.
\end{proposition}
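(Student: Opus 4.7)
The plan is to construct the coupling via an outer-to-inner dyadic exploration for outermost good faces inside $A^\eta(r,R)$, following the strategy of Propositions 3.1 and 3.6 in \cite{5}. First I partition $A^\eta(r,R)$ into sub-annuli $A_i := A^\eta(2^i r, 2^{i+1} r)$ for $i=0,\ldots,N-1$ with $N\sim\log_2(R/r)$. By Lemma \ref{l21} combined with quasi-multiplicativity and separation of arms for 2-arm events, the conditional probability of seeing good faces around $\partial\mathbb{D}_{2^i r}^\eta$ inside $A_i$ is uniformly bounded below by a constant $c>0$ under each of the three measures $P[\cdot|\mathcal{A}^\eta]$, $P[\cdot|\mathcal{A}^\eta(R)]$, and $P[\cdot|\mathcal{A}_2^\eta(\eta,R)]$ — the large-scale conditioning is absorbed by quasi-multiplicativity. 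Iterating over the $N$ scales, the probability of finding good faces at some scale is at least $1-(1-c)^N\geq 1-(r/R)^\beta$ for some $\beta>0$.

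Second, I reveal the outermost good faces $\Theta$ (and the scale $i^\ast$ at which they occur) via a boundary exploration from $\partial\mathbb{D}_R^\eta$ inward; this reveals only the configuration outside $\mathrm{Int}(\Theta)$. The key decoupling observation is that each of the three conditioning events splits, given $\Theta$, into an outer event about the configuration exterior to $\Theta$ and a common inner event: that inside $\Theta$ there is a blue arm from $\eta H_0$ to $\gamma_1$ and a yellow arm from $\eta H_0$ to $\gamma_2$. The good faces themselves carry the appropriately-colored arms out to whichever boundary piece the original event requires ($\partial_{1,-1}\mathbb{D}^\eta$ for $\mathcal{A}^\eta$, $\partial_{R,-R}\mathbb{D}_R^\eta$ for $\mathcal{A}^\eta(R)$, or $\partial\mathbb{D}_R^\eta$ for $\mathcal{A}_2^\eta(\eta,R)$). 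Hence, once the three explorations produce the same $\Theta$, the interior configurations have identical conditional law and can be coupled to be pointwise equal — the outside configurations, which do differ across the three measures, are then irrelevant to the claim.

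The remaining step, and the main technical obstacle, is to arrange that the three outer-to-inner explorations return the same $\Theta$. I process the annuli from $i=N-1$ inward; at each scale, after revealing a compatible outer configuration in each of the three measures, quasi-multiplicativity and separation of arms imply that the single-annulus laws for ``which good faces appear in $A_i$, given they appear'' are mutually absolutely continuous with Radon-Nikodym derivatives bounded above and below by absolute constants. A maximal coupling at each scale therefore succeeds with probability bounded below by some $c'>0$, and iterating across the $N$ scales gives matched outermost $\Theta$'s with total failure probability at most $(1-c')^N\leq (r/R)^\beta$ (after possibly shrinking $\beta$). The hard part is precisely this uniform absolute continuity of the single-annulus laws under the three differing far-away conditionings, which is the 2-arm specialization of the standard estimates developed in \cite{5,24} and which the author explicitly points to as the reason the proof is omitted.
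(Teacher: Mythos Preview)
Your proposal is correct and follows precisely the approach the paper itself indicates: the author omits the proof of Proposition~\ref{p3} entirely, stating only that it is ``very similar to those of Proposition 3.1 and 3.6 in \cite{5} (see also Lemma 2.3 in \cite{24}),'' which is exactly the outer-to-inner scale-by-scale coupling you describe. Your identification of the key technical input---uniform comparability of the single-annulus face laws under the three far-away conditionings, via quasi-multiplicativity and separation---matches what the cited references supply.
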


\begin{proposition}\label{p4}
There exist constants $C_0,C_1>0$, such that for all $\eta\leq
r<R/2$, any fixed faces $\Theta$ around $\partial
\mathbb{D}_R^{\eta}$ and $N:=\lfloor\log_2(R/r)\rfloor$, there is a
coupling of $P[\cdot|\mathcal {A}_{\Theta}^{\eta}(r,R)]$ and
$\{P_{(1/2)^jR}^*[\cdot]\}_{1\leq j\leq N}$, so that
\begin{itemize}
\item for all
$1\leq j\leq N $, with probability at least $1-\exp(-C_0j)$, there
exists $1\leq j^*\leq j$ such that there exist good faces
$\Theta_{j^*}$ around $\partial \mathbb{D}^{\eta}_{(1/2)^{j^*}R}$
under $P[\cdot|\mathcal {A}_{\Theta}^{\eta}(r,R)]$, and the
configuration in $\overline{\Theta}_{j^*}$ under $P[\cdot|\mathcal
{A}_{\Theta}^{\eta}(r,R)]$ is the same as the configuration under
$P_{(1/2)^{j^*}R}^*[\cdot]$;
\item for all $1\leq j\leq N-1$, with probability at least
$\exp(-C_1(j+1))$, for all $1\leq j'\leq j$ there do not exist good
faces around $\partial \mathbb{D}^{\eta}_{(1/2)^{j'}R}$, but there
exist good faces $\Theta_{j+1}$ around $\partial
\mathbb{D}^{\eta}_{(1/2)^{j+1}R}$ under $P[\cdot|\mathcal
{A}_{\Theta}^{\eta}(r,R)]$, and the configuration in
$\overline{\Theta}_{j+1}$ under $P[\cdot|\mathcal
{A}_{\Theta}^{\eta}(r,R)]$ is the same as the configuration under
$P_{(1/2)^{j+1}R}^*[\cdot]$.
\end{itemize}
\end{proposition}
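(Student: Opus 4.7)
The plan is to construct the coupling iteratively across the dyadic scales $R_j := (1/2)^j R$, exploring the configuration drawn from $P[\cdot|\mathcal{A}_\Theta^\eta(r,R)]$ from the outside in, in the spirit of Proposition 3.6 of \cite{5} and Lemma 2.3 of \cite{24}. At stage $j = 1, 2, \ldots, N$ I would reveal every hexagon in the annulus $A^\eta(R_j, R_{j-1})$ not yet seen at earlier stages, and then declare ``success at scale $j$'' if the exposed configuration contains good faces $\Theta_j$ around $\partial\mathbb{D}_{R_j}^\eta$, i.e.\ if the event $\mathcal{R}(A^\eta(R_j, R_{j-1}))$ of Lemma \ref{l21} holds in that annulus. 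The two bullets of the proposition will follow from matching upper and lower bounds on the conditional probability of success at stage $j$ given the history of the earlier stages.

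The key quantitative input is the following claim: on the event that no success has occurred at scales $1, \ldots, j-1$, and conditional on the configuration revealed in $A^\eta(R_{j-1}, R)$ together with the global event $\mathcal{A}_\Theta^\eta(r,R)$, the conditional probability of success at scale $j$ lies in $[c_0, 1-c_1]$ for some universal constants $0 < c_0 \leq 1-c_1 < 1$. The lower bound combines Lemma \ref{l21} (unconditional positivity of $\mathcal{R}$) with quasi-multiplicativity of the 2-arm probability and FKG/RSW-type arguments that absorb the conditioning: the global arm event decomposes across dyadic annuli up to bounded multiplicative error, so inside annulus $j$ the conditional law of the 2-arm configuration is comparable to the unconditional law of two crossing arms in $A^\eta(R_j, R_{j-1})$, to which Lemma \ref{l21} directly applies. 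The upper bound is immediate from the existence of 2-arm configurations in annulus $j$ for which $\mathcal{R}$ fails. Iterating these bounds yields $P(\text{no success at scales } 1, \ldots, j) \leq (1 - c_0)^j \leq \exp(-C_0 j)$, which is the first bullet, and $P(\text{first success at stage } j+1) \geq c_0(1-c_1)^j \geq \exp(-C_1(j+1))$, which is the second.

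Once the first success occurs at some stage $j^*$ with revealed faces $\Theta_{j^*}$, I would match the configuration inside $\overline{\Theta}_{j^*}$ under $P[\cdot|\mathcal{A}_\Theta^\eta(r,R)]$ with that under $P^*_{R_{j^*}}$ as follows. By the Markov property, given the outer exploration, the conditional law of the faces $\Theta_{j^*}$ themselves matches the $P[\cdot|\mathcal{R}]$ law at scale $R_{j^*}$ used in the definition of $P^*_{R_{j^*}}$, up to an outer-arm conditioning that is absorbed by the stage-by-stage coupling. The interior law $P[\cdot|\mathcal{A}_{\Theta_{j^*}}^\eta(r, R_{j^*})]$ is then coupled with the corresponding interior law $P[\cdot|\mathcal{A}_{\Theta_{j^*}}^\eta(\eta, R_{j^*})]$ of $P^*_{R_{j^*}}$ by a further scale-by-scale coupling analogous to Proposition \ref{p3}, now applied inside $\Theta_{j^*}$; any residual discrepancy contributes to the failure probability and is absorbed into the $\exp(-C_0 j)$ error.

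The main obstacle will be the uniform positive lower bound $c_0$: one must verify that neither the conditioning on the global arm event $\mathcal{A}_\Theta^\eta(r,R)$ nor the conditioning on the outer exploration history destroys the positive probability of $\mathcal{R}$ in annulus $j$. This is precisely the classical difficulty handled by the quasi-multiplicativity and separation-of-arms machinery of \cite{5}, and once it is in hand the remainder of the argument is a mechanical iteration.
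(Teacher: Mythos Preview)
Your approach is the paper's: the proof there is omitted entirely, with a pointer to Propositions~3.1 and~3.6 of \cite{5} and Lemma~2.3 of \cite{24}, which is precisely the iterative dyadic exploration with separation-of-arms input that you outline. One small caution: the conditional law of $\Theta_{j^*}$ given the outer exploration and $\mathcal{A}_\Theta^\eta(r,R)$ is not literally $P[\cdot\mid\mathcal{R}]$ but only comparable to it via quasi-multiplicativity (this is exactly what the \cite{5} machinery supplies), and in the paper's only application (Lemma~\ref{l18}) one has $r=\eta$, so the $r$-versus-$\eta$ interior discrepancy you raise does not in fact arise.
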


For the next proposition we need some additional notation.  Let
$0<r<1$.  For the percolation exploration path
$\gamma^{\eta}_{\mathbb{D},1,-1}$,  define event
\begin{equation*}
\mathcal {A}_{r}^{\eta}:=\{\gamma_{\mathbb{D},1,-1}^{\eta}\cap
\partial \mathbb{D}_r^{\eta}\neq\emptyset\}.
\end{equation*}
For a curve $\gamma$ with $\gamma\cap\partial
\mathbb{D}_r^{\eta}\neq \emptyset$, denote by $\tau_r^{\eta}$ the
first hitting time with $\partial \mathbb{D}_r^{\eta}$ of $\gamma$.
\begin{proposition}\label{p5}
There exists a constant $\beta>0$, such that for all
$100\eta<10r<R<1$, there is a coupling of the measures
$P[\cdot|\mathcal {A}^{\eta}]$ and $P[\cdot|\mathcal {A}_r^{\eta}]$,
so that with probability at lest $1-(r/R)^{\beta}$, the stopped
percolation exploration path
$\gamma^{\eta}_{\mathbb{D},1,-1}[0,\tau^{\eta}_R]$ under
$P[\cdot|\mathcal {A}^{\eta}]$ is identical to that under
$P[\cdot|\mathcal {A}_r^{\eta}]$.
\end{proposition}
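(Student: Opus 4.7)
The plan is to adapt the arm-separation and faces-coupling machinery of \cite{5} (cf.\ Proposition \ref{p3}), exploiting the fact that $\mathcal{A}^{\eta}$ and $\mathcal{A}_r^{\eta}$ differ only in what they impose inside $\mathbb{D}_r^{\eta}$: outside $\mathbb{D}_r^{\eta}$, both conditionings reduce to the same polychromatic 2-arm event with a blue arm landing in $\partial_{1,-1}\mathbb{D}^{\eta}$ and a yellow arm landing in $\partial_{-1,1}\mathbb{D}^{\eta}$. Since the stopped path $\gamma^{\eta}_{\mathbb{D},1,-1}[0,\tau^{\eta}_R]$ lies entirely in $A^{\eta}(R,1)$, the goal reduces to coupling the two measures so that the configuration in $A^{\eta}(R,1)$ coincides.

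First, I would apply an RSW/FKG/quasi-multiplicativity argument to the dyadic annuli $A^{\eta}(2^{-j}R,2^{1-j}R)$ for $j\geq 1$ with $2^{-j}R\geq 2r$. Lemma \ref{l21} gives $P(\mathcal{R}(A^{\eta}(s,2s)))>c>0$ uniformly in $s$, and a standard arm-separation argument shows this probability remains uniformly bounded below given the surrounding polychromatic 2-arm conditioning. Summing geometrically across the $\asymp\log_2(R/r)$ dyadic scales, the probability that no such scale yields good faces $\Theta\subset A^{\eta}(s,2s)$ around $\partial\mathbb{D}_s^{\eta}$ is at most $(r/R)^{\beta}$ for some $\beta>0$, under either $P[\cdot|\mathcal{A}^{\eta}]$ or $P[\cdot|\mathcal{A}_r^{\eta}]$.

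Second, let $s$ be the largest dyadic scale at which good faces $\Theta$ occur and condition on $(s,\Theta)$. Since percolation is a product measure, given $\Theta$ the configurations inside and outside $\mathbb{D}_{2s}^{\eta}$ are independent. Moreover, both conditioning events factor as (outer)$\cap$(inner): the outer event --- extending the blue/yellow sides of $\Theta$ with matching colors to $\partial_{1,-1}\mathbb{D}^{\eta}$ and $\partial_{-1,1}\mathbb{D}^{\eta}$ in $\overline{\mathbb{D}^{\eta}}\setminus\overline{\mathbb{D}_{2s}^{\eta}}$ --- is the same for both measures, while the inner events differ (arms reaching $\eta H_0$ versus arms reaching $\partial\mathbb{D}_r^{\eta}$). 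Hence, given $(s,\Theta)$, the conditional law of the outer configuration is identical under the two measures. The coupling is then built by jointly sampling $(s,\Theta)$, sampling the outer configuration once from the common conditional law, and finally sampling the two inner configurations independently from their respective inner laws; this makes the stopped path, which is measurable with respect to the outer configuration, identical on the matched event.

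The main obstacle will be the joint sampling of $(s,\Theta)$ in Step 3: its marginal distributions under the two measures differ by a Bayes factor equal to the ratio of the two inner arm probabilities, so they are not literally identical. However, the quality lower bound built into the definition of good faces, together with the generalized quasi-multiplicativity of the relevant polychromatic 2-arm probabilities, bounds this Bayes factor above and below by constants depending only on the geometry of $\Theta$. Consequently the total variation distance between the two marginals of $(s,\Theta)$ on the high-probability event of Step 1 is at most a constant multiple of $(r/R)^{\beta}$ and can be absorbed by shrinking $\beta$. An identity coupling of $(s,\Theta)$ on the matched event plus an arbitrary extension off it then yields the required probability bound, paralleling exactly the structure of Proposition 3.1 and 3.6 in \cite{5}.
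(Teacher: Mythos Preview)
Your overall plan matches the paper's, which omits the proof and simply refers to Propositions~3.1 and~3.6 in \cite{5}. The ingredients you list---that the two conditionings differ only through the inner target, that good faces in dyadic annuli of $A^{\eta}(r,R)$ decouple inner from outer, and that given $(s,\Theta)$ the outer conditional laws coincide---are exactly right and are the backbone of the GPS argument.

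The gap is in Step~3. You claim the total variation distance between the two marginals of $(s,\Theta)$ is $O((r/R)^{\beta})$, arguing from the bounded Bayes factor. But a Radon--Nikodym derivative bounded in $[1/C,C]$ only yields a maximal coupling with agreement probability at least $1/C$, not at least $1-(r/R)^{\beta}$; the bound on the Bayes factor comes from quasi-multiplicativity and the quality condition and does \emph{not} improve as $r/R\to 0$, so a one-shot coupling of the ``largest good scale'' cannot deliver the exponent.

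In the argument of \cite{5} the exponent $\beta$ comes from \emph{iteration} across scales, not from a single TV estimate. One explores inward scale by scale; at each dyadic annulus in $A^{\eta}(r,R)$ the bounded Radon--Nikodym derivative allows a maximal coupling of the two conditional laws on that annulus (given everything already explored) that agrees with probability at least some fixed $c>0$. When this succeeds and good faces are present in that annulus (Lemma~\ref{l21} plus arm separation), the two measures share the same faces $\Theta$, and by your own Step~2 the outer configurations then have identical conditional laws and can be coupled exactly. Each of the $\asymp\log_2(R/r)$ scales thus offers a fresh chance of order $c$ to succeed, giving failure probability at most $(1-c)^{c'\log(R/r)}=(r/R)^{\beta}$. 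Your final sentence correctly points to this structure in \cite{5}, but your description of where the exponent arises should be replaced by this scale-by-scale iteration.
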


\subsection{Basic properties of arm events}\label{s23}

In this paper, we assume that the reader is familiar with the FKG
inequality (see Lemma 13 in \cite{19} for generalized FKG), the BK
(van den Berg-Kesten) inequality and Reimer's inequality \cite{45},
and the RSW (Russo-Seymour-Welsh) technology. See \cite{9,23}.  The
following properties of arm events are well known (see \cite{19})
except (\ref{e72}) and (\ref{e67}), where (\ref{e67}) is a
generalization of the standard quasi-multiplicativity.

\begin{enumerate}
\item \emph{A priori bounds for arm events}:  For any color sequence $\sigma$, there
exist $C_1(|\sigma|)$, $C_2(|\sigma|)$, $\alpha(|\sigma|)$,
$\beta(|\sigma|)>0$ such that for all $\eta\leq r<R$,
\begin{equation}\label{e10}
C_1\left(\frac{r}{R}\right)^{\alpha}\leq P[\mathcal
{A}^{\eta}_{\sigma}(r,R)]\leq C_2\left(\frac{r}{R}\right)^{\beta}.
\end{equation}
\item There exists a constant $C>0$, such that for all $\eta\leq r<R$ and faces $\Theta$ around $\partial
\mathbb{D}^{\eta}_R$ with $Q(\Theta)>1/4$,
\begin{equation}\label{e72}
CP[\mathcal {A}_2^{\eta}(r,R)]\leq P[\mathcal
{A}_\Theta^{\eta}(r,R)]\leq P[\mathcal {A}_2^{\eta}(r,R)].
\end{equation}
\item \emph{Quasi-multiplicativity}: For any color sequence $\sigma$, there
is a $C_1(|\sigma|)>0$, such that for all $\eta\leq r_1<r_2\leq
r_3<r_4$ and $r_3\leq 10r_2$,
\begin{equation*}
C_1P[\mathcal {A}_{\sigma}^{\eta}(r_1,r_2)]P[\mathcal
{A}_{\sigma}^{\eta}(r_3,r_4)]\leq P[\mathcal
{A}_{\sigma}^{\eta}(r_1,r_4)]\leq P[\mathcal
{A}_{\sigma}^{\eta}(r_1,r_2)]P[\mathcal
{A}_{\sigma}^{\eta}(r_3,r_4)].
\end{equation*}
Furthermore,  there is a $C_2>0$, such that for all $\eta\leq
r_1<r_2\leq r_3/2$ and any given faces $\Theta$ around $\partial
\mathbb{D}^{\eta}_{r_3}$,
\begin{equation}\label{e67}
C_2P[\mathcal {A}_2^{\eta}(r_1,r_2)]P[\mathcal
{A}_{\Theta}^{\eta}(r_2,r_3)]\leq P[\mathcal
{A}_{\Theta}^{\eta}(r_1,r_3)]\leq P[\mathcal
{A}_2^{\eta}(r_1,r_2)]P[\mathcal {A}_{\Theta}^{\eta}(r_2,r_3)].
\end{equation}
\end{enumerate}

\begin{proof}
We just need to prove (\ref{e72}) and (\ref{e67}). Applying a
standard gluing argument with generalized FKG, RSW and Theorem 11 in
\cite{19}, one gets (\ref{e72}). The details are omitted.  Now let
us show (\ref{e67}).   Conditioned on $\mathcal
{A}_{\Theta}^{\eta}(r_2,r_3)$, the two interfaces (or b-paths)
starting from the endpoints of $\Theta=(\gamma_1,\gamma_2)$ to reach
$\partial \mathbb{D}_{2r_3/3}^{\eta}$ together with $\Theta$ induce
faces $\Theta'=(\gamma_1',\gamma_2')$ around $\partial
\mathbb{D}_{2r_3/3}^{\eta}$.  By Lemma 3.3 (Strong Separation Lemma)
in \cite{5}, there is some absolute constant $C_3>0$ such that
\begin{equation}\label{e68}
P\left[Q(\Theta')>\frac{1}{4}\mid\mathcal
{A}_{\Theta}^{\eta}(r_2,r_3)\right]\geq C_3.
\end{equation}
By a gluing construction with FKG, RSW and Theorem 11 in \cite{19},
there is some absolute constant $C_4>0$ such that for any given
$\Theta'$ with $Q(\Theta')>1/4$ (see an analogous
quasi-multiplicativity in \cite{24}),
\begin{equation}\label{e69}
C_4P[\mathcal {A}_2^{\eta}(r_1,r_2)]P[\mathcal
{A}_{\Theta'}^{\eta}(r_2,2r_3/3)]\leq P[\mathcal
{A}_{\Theta'}^{\eta}(r_1,2r_3/3)].
\end{equation}
Define
\begin{equation*}
\mathcal {A}_{\Theta,\Theta'}:=\left\{
\begin{aligned}
&\mbox{$\exists$ a blue arm connecting $\gamma_1$ and $\gamma_1'$ and}\\
&\mbox{a yellow arm connecting $\gamma_2$ and $\gamma_2'$}
\end{aligned}
\right\}.
\end{equation*}
By (\ref{e68}) and (\ref{e69}), we have
\begin{align*}
C_3C_4P[\mathcal {A}_2^{\eta}&(r_1,r_2)]P[\mathcal {A}_{\Theta}^{\eta}(r_2,r_3)]\\
&\leq C_4\sum_{Q(\Theta')>1/4}P[\Theta',\mathcal
{A}_{\Theta,\Theta'}]P[\mathcal
{A}_{\Theta'}^{\eta}(r_2,2r_3/3)]P[\mathcal
{A}_2^{\eta}(r_1,r_2)]\\
&\leq \sum_{Q(\Theta')>1/4}P[\Theta',\mathcal
{A}_{\Theta,\Theta'}]P[\mathcal
{A}_{\Theta'}^{\eta}(r_1,2r_3/3)]\leq P[\mathcal
{A}_{\Theta}^{\eta}(r_1,r_3)].
\end{align*}
By choosing $C_2=C_3C_4$, we conclude the proof.
\end{proof}

\section{Scaling limit of multi-arm IIC}\label{s3}
In this section we will prove our main results concerning the
scaling limit of $k$-arm IIC.  First we give some lemmas that will
be used. The following lemma can be seen as an analog of Lemma 2.9
in \cite{5} for quad-crossing percolation limit.

\begin{lemma}\label{l11}
For any $0<r<R$ and $k=1,2,4$, there exists a constant $C_k>0$
(depending on $r/R$), such that
\begin{equation}\label{e15}
\lim_{\eta\rightarrow 0}\mu_R^{\eta}[\mathcal
{A}_k^{\eta}(r,R)]=\mu_R[\mathcal {A}_k(r,R)]>C_k.
\end{equation}
Moreover, in any coupling of the measures $\{\mu_R^{\eta}\}$ and
$\mu_R$ on $(\Omega_R,\mathcal {F}_R)$ in which
$\dist(\omega_R^{\eta},\omega_R)\rightarrow 0$ a.s. as
$\eta\rightarrow 0$, we have
\begin{equation}\label{e14}
\hat{P}[\{\omega_R^{\eta}\in \mathcal
{A}_k^{\eta}(r,R)\}\Delta\{\omega_R\in \mathcal
{A}_k(r,R)\}]\rightarrow 0\mbox{ as }\eta\rightarrow 0,
\end{equation}
where $\hat{P}[\cdot]$ denotes the coupling measure.
\end{lemma}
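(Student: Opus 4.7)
The plan is to combine the weak convergence $\mu_R^\eta \to \mu_R$ of Theorem \ref{t2} with the a priori arm bounds of Section \ref{s23}, via a sandwich argument on perturbed annuli in the spirit of \cite{5}. Fix small $\delta > 0$; from the loop formulations (\ref{e11})--(\ref{e13}) and their continuum analogues, one directly verifies the set-theoretic sandwich
\begin{equation*}
\mathcal{A}_k(r-\delta, R) \subset \mathcal{A}_k(r, R) \subset \mathcal{A}_k(r+\delta, R-\delta),
\end{equation*}
valid both for the CLE$_6$ event and (with $\mathcal{A}_k^\eta$ in place of $\mathcal{A}_k$) for the discrete event. For $k = 2, 4$ the inner inclusion uses that a continuous loop hitting both concentric circles hits every intermediate one, and the outer inclusion uses restriction of an arm to a subannulus.

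The crux is a \emph{Hausdorff-stability} claim: there exists $\varepsilon(\delta) > 0$, tending to $0$ with $\delta$, such that whenever $\dist(\omega, \omega') < \varepsilon(\delta)$ and $\eta < \varepsilon(\delta)$,
\begin{equation*}
\omega \in \mathcal{A}_k(r - \delta, R) \Longrightarrow \omega' \in \mathcal{A}_k^\eta(r, R), \qquad \omega \notin \mathcal{A}_k(r + \delta, R - \delta) \Longrightarrow \omega' \notin \mathcal{A}_k^\eta(r, R).
\end{equation*}
This rests on Hausdorff-closeness of loop collections in $\Omega_R$ transferring to sup-metric closeness of specific loops, so the $\delta$-slack in the radii absorbs any perturbation below $\varepsilon(\delta)$. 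Once this is in place, in any coupling realizing $\dist(\omega_R^\eta, \omega_R) \to 0$ a.s., the symmetric difference in (\ref{e14}) is contained in
\begin{equation*}
\{\omega_R \in \mathcal{A}_k(r - \delta, R) \setminus \mathcal{A}_k(r + \delta, R - \delta)\} \cup \{\dist(\omega_R^\eta, \omega_R) \ge \varepsilon(\delta)\}.
\end{equation*}
The measure of the gap event is $\le \psi(\delta) \to 0$ uniformly in $\eta$: quasi-multiplicativity (Section \ref{s23}) combined with (\ref{e10}) yields $\mu_R^\eta[\mathcal{A}_k^\eta(r-\delta, R)]/\mu_R^\eta[\mathcal{A}_k^\eta(r+\delta, R-\delta)] \ge 1 - O(\delta^{\alpha'})$, and weak convergence together with Portmanteau transfers this to $\mu_R$. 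Letting first $\eta \to 0$ and then $\delta \to 0$ yields (\ref{e14}), and (\ref{e15}) follows by taking expectations via the Skorokhod coupling.

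The positivity $\mu_R[\mathcal{A}_k(r, R)] > C_k$ comes from passing to the limit in $\mu_R^\eta[\mathcal{A}_k^\eta(r, R)]$: for $k = 1$ the arm event is monotone increasing in the blue sites, so FKG gives $\mu_R^\eta[\mathcal{A}_1^\eta] \ge P^\eta[\mathcal{A}_1^\eta] \ge C(r/R)^\alpha$ directly from (\ref{e10}); for $k = 2, 4$ a standard RSW construction (a yellow crossing in a thin collar near $\partial \mathbb{D}_R$, glued with bulk arms) yields the uniform lower bound. The main obstacle will be the Hausdorff-stability step: although intuitively clear, rigorously transferring a topological existence/non-existence condition on specific loops from the continuum to the discrete requires unpacking the definition (\ref{e2}) and using that Hausdorff-close collections of continuous curves contain sup-metric-close representatives, a fact that derives from the Aizenman--Burchard framework underlying Theorem \ref{t2}.
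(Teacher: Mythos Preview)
Your sandwich strategy is natural, but two of its load-bearing steps do not go through as stated, and fixing them essentially forces you back to the paper's argument.

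First, the deterministic Hausdorff-stability implication fails at the outer radius for $k=2,4$. If $\omega_R\in\mathcal{A}_2(r-\delta,R)$, the relevant continuum loop touches $\partial\mathbb{D}_R$; a discrete loop within $\varepsilon$ of it in sup-metric is only guaranteed to reach radius $R-\varepsilon$, and since it lives in $\overline{\mathbb{D}_R^\eta}$ there is no reason it must touch $\partial\mathbb{D}_R^\eta$. (You cannot enlarge the outer radius to $R+\delta$ in the sandwich, because loops under $\mu_R$ are confined to $\overline{\mathbb{D}}_R$.) So the implication $\omega\in\mathcal{A}_k(r-\delta,R)\Rightarrow\omega'\in\mathcal{A}_k^\eta(r,R)$ is not a deterministic consequence of closeness; it can only hold outside an exceptional event whose probability must be controlled separately. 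Also note that your displayed gap event $\mathcal{A}_k(r-\delta,R)\setminus\mathcal{A}_k(r+\delta,R-\delta)$ is empty by your own inclusion; you presumably mean the reverse difference.

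Second, quasi-multiplicativity does not give the ratio bound you claim. The inequality $P[\mathcal{A}_k^\eta(r-\delta,R)]\ge C_1\,P[\mathcal{A}_k^\eta(r-\delta,r+\delta)]\,P[\mathcal{A}_k^\eta(r+\delta,R)]$ carries a fixed constant $C_1<1$ that does not tend to $1$ as $\delta\to0$, so even though $P[\mathcal{A}_k^\eta(r-\delta,r+\delta)]\to1$, you only get the ratio bounded below by $C_1$, not by $1-O(\delta^{\alpha'})$. For $k\ge2$ the arms are polychromatic and FKG is unavailable, so there is no cheap way to push the constant to $1$.

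The paper resolves both issues with a single observation: on the event $\{\dist(\omega_R^\eta,\omega_R)<\epsilon\}$, membership in $\mathcal{A}_k^\eta(r,R)\,\Delta\,\mathcal{A}_k(r,R)$ forces a percolation interface to come within $O(\epsilon)$ of $\partial\mathbb{D}_r$ or $\partial\mathbb{D}_R$ without crossing it, which produces a polychromatic half-plane $3$-arm event from a $2\epsilon$-collar to macroscopic distance. Since that exponent equals $2$, the probability is $O(\epsilon)$ uniformly in $\eta$. This is exactly the ``probabilistic near-boundary estimate'' your approach is missing; once you insert it, the sandwich framework becomes redundant.
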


\begin{proof}
By Theorem \ref{t2}, we can couple the measures $\{\mu_R^{\eta}\}$
and $\mu_R$ on $(\Omega_R,\mathcal {F}_R)$ such that
$\dist(\omega_R^{\eta},\omega_R)\rightarrow 0$ a.s. as
$\eta\rightarrow 0$.  Let us show (\ref{e14}) for $k=1,2,4$
respectively in the following.

By (\ref{e11}) and the definition of $\mathcal {A}_1(r,R)$,  it is
easy to see that for each small $\epsilon>0$ and $\eta<\epsilon$,
\begin{align*}
&\hat{P}[\{\omega_R^{\eta}\in \mathcal
{A}_1^{\eta}(r,R)\}\Delta\{\omega_R\in \mathcal {A}_1(r,R)\}]\\
&\leq\hat{P}[\dist(\omega_R^{\eta},\omega_R)\geq\epsilon]+\hat{P}\left[
\begin{aligned}
&\mbox{$\exists$ counterclockwise loop
$\gamma^{\eta}\in\omega_R^{\eta}$
surrounding the}\\
&\mbox{origin in $A(r-\epsilon,R)$, and $\gamma^{\eta}\cap
A(r-\epsilon,r+\epsilon)\neq\emptyset$}
\end{aligned}
\right].
\end{align*}
The first term goes to zero as $\eta\rightarrow 0$.  The event in
the second term produces a half-plane 3-arm event from the
$2\epsilon$-neighborhood of $\partial\mathbb{D}_r$ to a distance of
unit order, whose probability goes to zero as $\epsilon\rightarrow
0$, since the polychromatic half-plane 3-arm exponent is 2; see,
e.g., Lemma 6.8 in \cite{35}.  Then (\ref{e14}) is proved in the
case $k=1$.

By (\ref{e12}) and the definition of $\mathcal {A}_2(r,R)$, for each
small $\epsilon>0$ and $\eta<\epsilon$,
\begin{align*}
&\hat{P}[\{\omega_R\in \mathcal
{A}_2(r,R)\}\backslash\{\omega_R^{\eta}\in \mathcal
{A}_2^{\eta}(r,R)\}]\\
&\leq\hat{P}[\dist(\omega_R^{\eta},\omega_R)\geq\epsilon]+\hat{P}\left[
\begin{aligned}
&\mbox{$\exists$ counterclockwise loop
$\gamma^{\eta}\in\omega_R^{\eta}$
intersecting with $\partial \mathbb{D}_{r+\epsilon}$}\\
&\mbox{and $\partial \mathbb{D}_{R-\epsilon}$ in
$\overline{\mathbb{D}_R^{\eta}}$, and
$\gamma^{\eta}\cap\partial\mathbb{D}_r^{\eta}=\emptyset$ or
$\gamma^{\eta}\cap\partial\mathbb{D}_R^{\eta}=\emptyset$}
\end{aligned}
\right].
\end{align*}
The event in the second term implies a half-plane 3-arm event from
the $2\epsilon$-neighborhood of $\partial\mathbb{D}_r$ or
$\partial\mathbb{D}_R$ to a distance of unit order, whose
probability goes to zero as $\epsilon\rightarrow 0$.  Then we get
that $\hat{P}[\{\omega_R\in \mathcal
{A}_2(r,R)\}\backslash\{\omega_R^{\eta}\in \mathcal
{A}_2^{\eta}(r,R)\}]\rightarrow 0$ as $\eta\rightarrow 0$.  Now let
us show the other direction.  Similarly, for each small $\epsilon>0$
and $\eta<\epsilon$, we have
\begin{align*}
&\hat{P}[\{\omega_R^{\eta}\in \mathcal
{A}_2^{\eta}(r,R)\}\backslash\{\omega_R\in \mathcal
{A}_2(r,R)\}]\\
&\leq\hat{P}[\dist(\omega_R^{\eta},\omega_R)\geq\epsilon]+\hat{P}
\left[
\begin{aligned}
&\mbox{$\exists$ counterclockwise loop $\gamma\in\omega_R$
intersecting with $\partial \mathbb{D}_{r+\epsilon}$}\\
&\mbox{and $\partial \mathbb{D}_{R-2\epsilon}$ in
$\overline{\mathbb{D}}_R$, and
$\gamma\cap\partial\mathbb{D}_r=\emptyset$ or
$\gamma\cap\partial\mathbb{D}_R=\emptyset$}
\end{aligned}
\right].
\end{align*}
Clearly the second term goes to zero as $\epsilon\rightarrow 0$.
Then (\ref{e14}) is proved in the case $k=2$.

Similarly to the case $k=2$, one can prove the case where $k=4$, and
the details are omitted.

(\ref{e10}) and (\ref{e14}) imply (\ref{e15}) immediately.
\end{proof}

A collection of measures is said to be \emph{(weakly) relatively
compact} if every sequence has a convergent subsequence.  To prove
the existence of the scaling limit, we need a lemma on the existence
of subsequential scaling limits:

\begin{lemma}\label{l6}
Let $k=1,2,4$.  $\{\mu_R^\eta[\cdot|\mathcal
{A}_k^{\eta}(\eta,R)]\}_{\eta}$ and $\{\mu_k^{\eta}[\cdot]\}_{\eta}$
are relatively compact.
\end{lemma}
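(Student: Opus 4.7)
The plan is to deduce relative compactness from the Aizenman--Burchard tightness criterion for systems of curves, as used by Camia--Newman to obtain Theorem~\ref{t2} in the first place. That criterion reduces the claim to the following uniform multi-crossing bound: for each $\delta>0$ there exist $K=K(\delta)$ and $C=C(\delta)$ such that for every annulus $A(z;r,\rho)\subset\mathbb{C}$,
\begin{equation*}
\mu_R^\eta\bigl[\text{at least } K \text{ disjoint interface crossings of } A(z;r,\rho)\bigm|\mathcal{A}_k^\eta(\eta,R)\bigr]\leq C\left(\frac{r}{\rho}\right)^{2+\delta},
\end{equation*}
uniformly in $\eta$ (and in $R$, for the purpose of handling $\mu_k^\eta$). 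In the unconditional regime this is the standard polychromatic $K$-arm estimate, which follows from the BK inequality and the a priori bound (\ref{e10}) with an exponent $\alpha(K)\to\infty$.

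For the first family $\{\mu_R^\eta[\cdot\mid\mathcal{A}_k^\eta(\eta,R)]\}$ I would split according to the location of $z$. If $|z|\geq 3\rho$ the annulus is far from the origin, so a standard coupling/gluing argument (in the spirit of Proposition~\ref{l3}, or simply conditioning on the configuration outside a small neighborhood of $0$ together with a suitable $k$-circuit) shows that the law inside $A(z;r,\rho)$ is comparable, with bounded Radon--Nikodym derivative, to the unconditional law; the unconditional bound therefore carries over. If $|z|<3\rho$ the annulus is essentially near the origin; combining $K$ disjoint alternating crossings of $A(z;r,\rho)$ with the conditioning event and using the generalized quasi-multiplicativity (\ref{e67}) together with FKG/RSW gluing yields
\begin{equation*}
P\bigl[\{K\text{ crossings of }A(z;r,\rho)\}\cap\mathcal{A}_k^\eta(\eta,R)\bigr]\leq C\,P[\mathcal{A}_K^\eta(r,\rho)]\,P[\mathcal{A}_k^\eta(\eta,R)],
\end{equation*}
and dividing through by $P[\mathcal{A}_k^\eta(\eta,R)]$ and applying (\ref{e10}) gives the desired decay. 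Choosing $K$ so that $\alpha(K)>2+\delta$ completes the verification of the Aizenman--Burchard hypothesis.

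For the family $\{\mu_k^\eta\}$, tightness follows by the same estimates: since $\mu_k^\eta$ is obtained as $R\to\infty$ from the conditional measures on cylinder events, and all bounds above are uniform in $R$ once $R$ exceeds the scale $\rho$ of the annulus under consideration, the same crossing estimates pass to the limit; outside a fixed ball one uses the tightness on the sphere $\hat{\mathbb{C}}$ and the fact that, far from the origin, $\mu_k^\eta$ is locally absolutely continuous with respect to unconditioned critical percolation, so Aizenman--Burchard applies directly.

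The main obstacle is the near-origin case, specifically the gluing step that turns ``$K$ crossings of $A(z;r,\rho)$ plus the $k$-arm event from $\eta$ to $R$'' into an honest $K$-arm (or $(K+k)$-arm) event across scales $r$ and $\rho$ with a uniform multiplicative loss. One must carefully account for the geometric configurations in which $z$, $r$, and $|z|$ are comparable, so that $A(z;r,\rho)$ may engulf the origin or narrowly miss it; in either subcase the gluing is routine given (\ref{e67}) and RSW, but bookkeeping the constants requires invoking the Strong Separation Lemma to ensure that the intermediate faces have quality bounded below, so that quasi-multiplicativity applies uniformly in $\eta$.
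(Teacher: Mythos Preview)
Your overall framework---reducing to the Aizenman--Burchard multi-crossing bound---is the same as the paper's. The execution, however, diverges, and your near-origin step has a genuine gap.

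The paper observes that $j$ interface crossings of $A(z;r_1,r_2)$ force $j/2$ disjoint \emph{yellow} arms across that annulus, and then separates these from the conditioning event $\mathcal{A}_k^\eta(\eta,R)$ using \emph{Reimer's inequality} combined with quasi-multiplicativity. Because the $k$-arm event itself contains $k/2$ yellow arms, only $(j-k)/2$ of the $j/2$ yellow crossings are guaranteed to occur disjointly from it; this is why the paper's bounds carry the exponent $(j-k)/2$ rather than $j/2$. The paper runs this through a four-case geometric decomposition depending on the position of $z$ relative to $0$, $r_1$, $r_2$, and $R$, so that in each case the $k$-arm event can be split by quasi-multiplicativity into pieces lying in annuli disjoint from the one where Reimer is applied.

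Your displayed factorization
\[
P\bigl[\{K\text{ crossings of }A(z;r,\rho)\}\cap\mathcal{A}_k^\eta(\eta,R)\bigr]\leq C\,P[\mathcal{A}_K^\eta(r,\rho)]\,P[\mathcal{A}_k^\eta(\eta,R)]
\]
is exactly the point that needs Reimer (or BK), yet you justify it with ``generalized quasi-multiplicativity (\ref{e67}) together with FKG/RSW gluing''. FKG goes the wrong way for an upper bound of this type, and (\ref{e67}) concerns 2-arm events with prescribed faces, not the decoupling of two overlapping events at the same scale. Moreover, even with Reimer the inequality as written is too strong: some of the $K$ interface crossings may share arms with the $k$-arm event, so you only get $K-k$ (or $(K-k)/2$ yellow) arms occurring disjointly, and you need the geometric case split to make quasi-multiplicativity reassemble $\mathcal{A}_k^\eta(\eta,R)$ afterward.

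Your far-from-origin case also needs tightening: the conditional law inside $A(z;r,\rho)$ is \emph{not} absolutely continuous with bounded Radon--Nikodym derivative with respect to unconditioned percolation when the arms from $0$ to $R$ pass through the annulus. What the paper does instead (Cases 1 and 2) is again Reimer plus quasi-multiplicativity, not a coupling or density argument.
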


\begin{proof}
We use the machinery developed by Aizenman and Burchard (Theorem 1.2
in \cite{33}).  Let $\mu^{\eta}$ denote the probability measure
supported on collections of curves that are polygonal paths on the
edges of $\eta\mathbb{H}$ in $\overline{\mathbb{D}_R^{\eta}}$. In
our setting, Hypothesis H1 of \cite{33} is as follows.

\textbf{Hypothesis H1}.  For all $j\in\mathbb{N}$, $z\in
\overline{\mathbb{D}}_R$ and $\eta\leq r_1<r_2\leq 1$, the following
bound holds uniformly in $\eta$ and $z$:

\begin{equation*}
\mu^{\eta}[A(z;r_1,r_2)\mbox{ is traversed $j$ times by a
curve}]\leq K_j(r_1/r_2)^{\phi(j)}
\end{equation*}
for some $K_j<\infty$ and $\phi(j)\rightarrow\infty$ as
$j\rightarrow\infty$.

Observe that the number of segments of a loop crossing an annulus is
necessarily even and that, if the annulus is traversed by $j\in
2\mathbb{N}$ separate segments of a loop $\in \omega_R^{\eta}$,
there will be $j/2$ disjoint yellow arms crossing this annulus.  Now
let us prove that $\{\mu_R^\eta[\cdot|\mathcal
{A}_k^{\eta}(\eta,R)]\}_{\eta}$ satisfies Hypothesis H1 for
$k=1,2,4$.  First, we do this in the case $k=1$, which is the
easiest one.

The BK inequality and (\ref{e10}) imply that there exist constants
$C>1,\lambda>0$, such that for all $\eta\leq r_1<r_2$,
$z\in\mathbb{C}$ and $j\in\mathbb{N}$,
\begin{equation}\label{e7}
P_R^{\eta}[\mathcal {A}_{j,Y\ldots
Y}^{\eta}(z;r_1,r_2)]\leq\left\{P_R^{\eta}[\mathcal
{A}_Y^{\eta}(z;r_1,r_2)]\right\}^j\leq C^j(r_1/r_2)^{\lambda j}.
\end{equation}
Let $j\in2\mathbb{N},\eta\leq r_1<r_2,z\in \overline{\mathbb{D}}_R$,
we have
\begin{align*}
&P_R^\eta[A(z;r_1,r_2)\mbox{ is traversed $j$ times by a
loop }|\mathcal {A}_1^{\eta}(\eta,R)]\\
&\leq \frac{P_R^{\eta}\left[\mathcal {A}_1^{\eta}(\eta,R),
\mathcal {A}_{j/2,Y\ldots Y}^{\eta}(z;r_1,r_2)\right]}{P_R^{\eta}[\mathcal {A}_1^{\eta}(\eta,R)]}\\
&\leq P_R^{\eta}\left[
\mathcal {A}_{j/2,Y\ldots Y}^{\eta}(z;r_1,r_2)\right]~~\mbox{by Reimer's inequality}\\
&\leq C^{j/2}(r_1/r_2)^{\lambda j/2}~~\mbox{by (\ref{e7})}.
\end{align*}

Now let us consider the cases of $k=2,4$.  Without loss of
generality, we assume $10\eta\leq 10r_1\leq r_2\leq R/4,j\in
2\mathbb{N}$ and $j\geq k+2$.  Let $C_1,C_2,C_3$ (just depending on
$k$) be appropriate positive constants.  We will distinguish the
following four cases (see Figure \ref{fig2}).

\begin{figure}
\begin{center}
\includegraphics[height=0.7\textwidth]{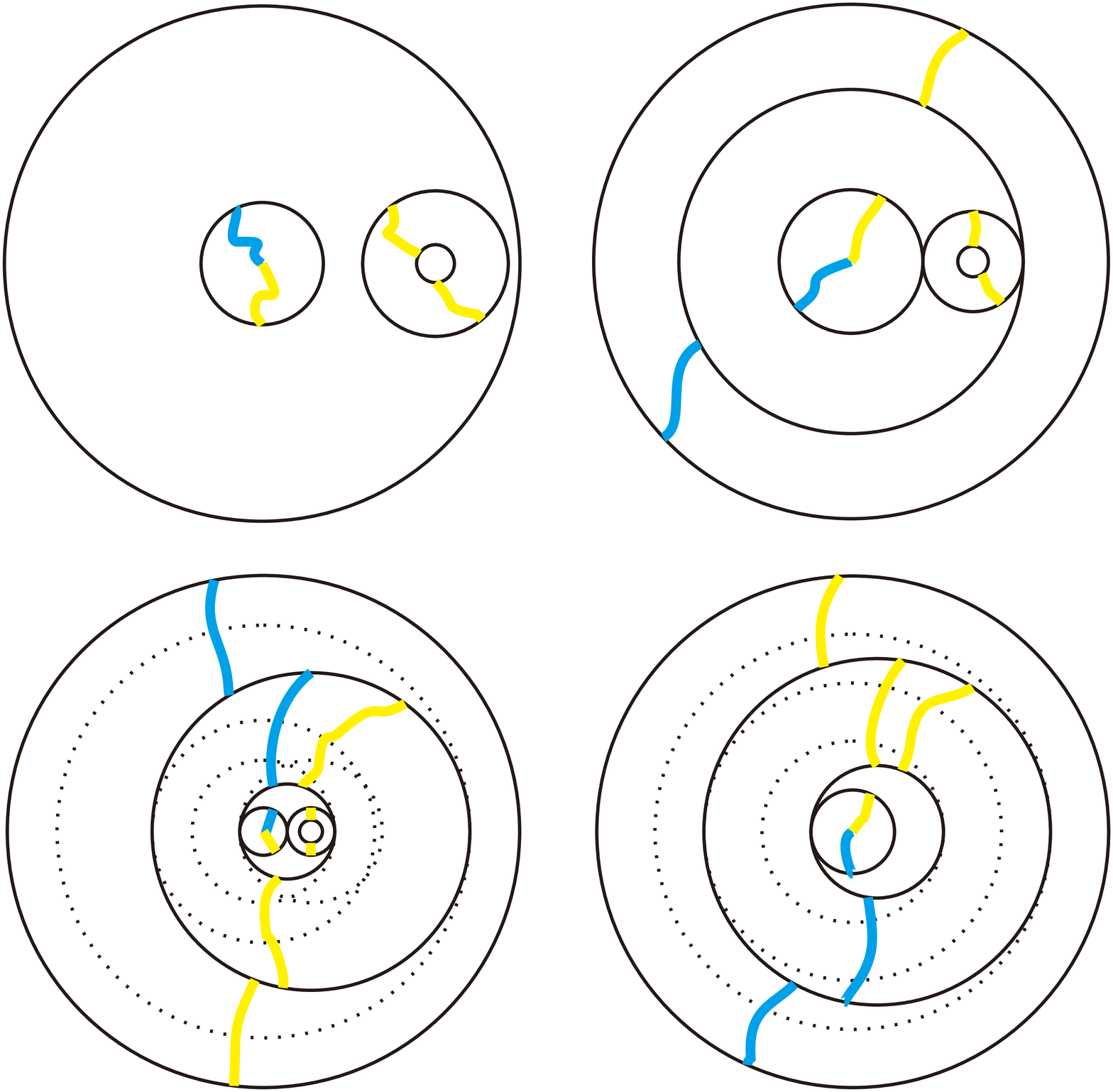}
\caption{A sketch of the four cases in the proof of Lemma \ref{l6}
($k=2,j=4$).}\label{fig2}
\end{center}
\end{figure}

Case 1: $R/2\leq |z|\leq R$.
\begin{align*}
&P_R^\eta[A(z;r_1,r_2)\mbox{ is traversed $j$ times by a loop
}|\mathcal
{A}_k^{\eta}(\eta,R)]\\
&\leq \frac{P_R^{\eta}[\mathcal
{A}_k^{\eta}(\eta,R/5)]}{P_R^{\eta}[\mathcal
{A}_k^{\eta}(\eta,R)]}P_R^{\eta}\left[\mathcal
{A}_{j/2,Y\ldots Y}^{\eta}(z;r_1,r_2)\right]\\
&\leq C_1C^{j/2}(r_1/r_2)^{\lambda j/2}~~\mbox{by
quasi-multiplicativity and (\ref{e7})}.
\end{align*}

Case 2: $r_2/3\leq |z|\leq R/2$.
\begin{align*}
&P_R^\eta[A(z;r_1,r_2)\mbox{ is traversed $j$ times by a loop
}|\mathcal
{A}_k^{\eta}(\eta,R)]\\
&\leq \frac{P_R^{\eta}[\mathcal
{A}_k^{\eta}(\eta,|z|-r_2/4),\mathcal
{A}_k^{\eta}(|z|+r_2/4,R)]}{P_R^{\eta}[\mathcal
{A}_k^{\eta}(\eta,R)]}P_R^{\eta}\left[\mathcal
{A}_{j/2,Y\ldots Y}^{\eta}\left(z;r_1,r_2/4\right)\right]\\
&\leq C_2C^{j/2}(r_1/r_2)^{\lambda j/2}~~\mbox{by
quasi-multiplicativity and (\ref{e7})}.
\end{align*}

Case 3: $3r_1\leq |z|\leq r_2/3$.
\begin{align*}
&P_R^\eta[A(z;r_1,r_2)\mbox{ is traversed $j$ times by a loop
}|\mathcal
{A}_k^{\eta}(\eta,R)]\\
&\leq P_R^{\eta}\left[\mathcal
{A}_k^{\eta}\left(\eta,|z|/2\right),\mathcal
{A}_k^{\eta}\left(\overline{\mathbb{D}(z;r_2)}\backslash\mathbb{D}\left(z/2;|z|\right)\right),\mathcal
{A}_k^{\eta}(\overline{\mathbb{D}}_R\backslash\mathbb{D}(z;r_2)),\right.\\
&~~~~~~~~~~~\left.\mathcal {A}_{j/2,Y\ldots
Y}^{\eta}\left(z;r_1,|z|/2\right),\mathcal {A}_{j/2,Y\ldots
Y}^{\eta}\left(\overline{\mathbb{D}(z;r_2)}\backslash\mathbb{D}\left(z/2;|z|\right)\right)
\right]/P_R^{\eta}[\mathcal
{A}_k^{\eta}(\eta,R)]\\
&\leq \frac{P_R^{\eta}[\mathcal {A}_k^{\eta}(\eta,|z|/2),\mathcal
{A}_k^{\eta}(3|z|/2,r_2-|z|),\mathcal
{A}_k^{\eta}(r_2+|z|,R)]}{P_R^{\eta}[\mathcal
{A}_k^{\eta}(\eta,R)]}\times\\
&~~~~~P_R^{\eta}\left[\mathcal {A}_{j/2,Y\ldots
Y}^{\eta}\left(z;r_1,|z|/2\right),\mathcal {A}_{(j-k)/2,Y\ldots
Y}^{\eta}\left(z;3|z|/2,r_2\right)\right]\mbox{by Reimer's inequality}\\
&\leq C_3C^{(j-k)/2}(r_1/r_2)^{\lambda(j-k)/2}~~\mbox{by
quasi-multiplicativity and (\ref{e7})}.
\end{align*}

Case 4: $|z|\leq 3r_1$.
\begin{align*}
&P_R^\eta[A(z;r_1,r_2)\mbox{ is traversed $j$ times by a loop
}|\mathcal
{A}_k^{\eta}(\eta,R)]\\
&\leq \frac{P_R^{\eta}[\mathcal {A}_k^{\eta}(\eta,r_1),\mathcal
{A}_k^{\eta}(z;|z|+r_1,r_2),\mathcal
{A}_k^{\eta}(\overline{\mathbb{D}}_R\backslash\mathbb{D}(z;r_2)),
\mathcal {A}_{j/2,Y\ldots
Y}^{\eta}(z;|z|+r_1,r_2)]}{P_R^{\eta}[\mathcal
{A}_k^{\eta}(\eta,R)]}\\
&\leq \frac{P_R^{\eta}[\mathcal {A}_k^{\eta}(\eta,r_1),\mathcal
{A}_k^{\eta}(2|z|+r_1,r_2-|z|),\mathcal
{A}_k^{\eta}(r_2+|z|,R)]}{P_R^{\eta}[\mathcal
{A}_k^{\eta}(\eta,R)]}\times\\
&~~~~~~~~P_R^{\eta}\left[\mathcal {A}_{(j-k)/2,Y\ldots
Y}^{\eta}(z;|z|+r_1,r_2)\right]~~\mbox{by Reimer's inequality}\\
&\leq C_4C^{(j-k)/2}(r_1/r_2)^{\lambda(j-k)/2}~~\mbox{by
quasi-multiplicativity and (\ref{e7})}.
\end{align*}
Hence, for $k=1,2,4$, $\{\mu_R^\eta[\cdot|\mathcal
{A}_k^{\eta}(\eta,R)]\}_{\eta}$ satisfies Hypothesis H1.  Then
Theorem 1.2 in \cite{33} implies that it is relatively compact.

For the relatively compactness of $\{\mu_k^{\eta}[\cdot]\}_{\eta}$,
we need to consider $\hat{\mathbb{C}}$ with metric (\ref{e9}).  It
is noted in the Remark just below Theorem 3.1 in \cite{46}, although
Theorem 1.2 in \cite{33} was formulated for compact subsets $\Lambda
\subset \mathbb{R}^d$, it also applies to this case.  By the
inequalities above and the definition of $\mu_k^{\eta}$, we have
that there exists a constant $C_5>0$ depending on $k$, such that
\begin{align*}
&\mu_k^{\eta}(A(z;r_1,r_2)\mbox{ is traversed $j$ times by a loop})\\
&=\lim_{R\rightarrow\infty}P_R^{\eta}[A(z;r_1,r_2)\mbox{ is
traversed $j$ times by a loop }|\mathcal {A}_k^{\eta}(\eta,R)]\leq
C_5C^j(r_1/r_2)^{\lambda(j-k)/2}.
\end{align*}
Similarly to the proof of (i) of Theorem 1.1 in \cite{46}, by Lemma
3.3 in \cite{46}, the corresponding bound on crossing probabilities
holds (with the same exponents) also for the system on
$\hat{\mathbb{C}}$ with the metric (\ref{e9}).  Then Theorem 1.2 in
\cite{33} implies that $\{\mu_k^{\eta}[\cdot]\}_{\eta}$ is
relatively compact for $k=1,2,4$.
\end{proof}

The following lemma is a particular case of the first part of
Theorem \ref{t6}, the proof of the general case is essentially the
same as for this lemma.

\begin{lemma}\label{l4}
Let $k=1,2,4$. For each $R>0$, as $\eta\rightarrow 0$ and
$\epsilon\rightarrow 0$, $\mu_R^\eta[\cdot|\mathcal
{A}_k^{\eta}(\eta,R)]$ and $\mu_R[\cdot|\mathcal {A}_k(\epsilon,R)]$
converge in law, under the topology induced by metric (\ref{e2}), to
the same probability distribution, denoted by $\mu_{k,R}$.
\end{lemma}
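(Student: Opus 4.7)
The plan is to combine relative compactness from Lemma~\ref{l6} with a coupling-based Cauchy argument to force every subsequential limit, on either the discrete or the continuum side, to agree with a common measure $\mu_{k,R}$.

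First I would use Lemma~\ref{l6} to extract weak subsequential limits of $\nu_\eta := \mu_R^\eta[\cdot\mid\mathcal{A}_k^\eta(\eta,R)]$, and obtain the analogous tightness of $\{\mu_R[\cdot\mid\mathcal{A}_k(\epsilon,R)]\}_\epsilon$ (most cheaply by passing $\eta\to 0$ with $\epsilon$ fixed in Lemma~\ref{l6}, using the bridging step below). By a subsequence argument it suffices to show every subsequential limit of either family coincides with a common measure. The bridging step identifies the ``partially pinned'' measures across scales: for fixed $r>0$, Theorem~\ref{t2} together with Lemma~\ref{l11} yield
\begin{equation*}
\mu_R^\eta[\cdot\mid\mathcal{A}_k^\eta(r,R)]\;\longrightarrow\;\mu_R[\cdot\mid\mathcal{A}_k(r,R)]
\qquad\text{as } \eta\to 0,
\end{equation*}
because Lemma~\ref{l11} gives $\mathbf{1}_{\mathcal{A}_k^\eta(r,R)}\to\mathbf{1}_{\mathcal{A}_k(r,R)}$ in probability under any coupling realizing $\mu_R^\eta\to\mu_R$, while the denominators are bounded away from $0$ by (\ref{e10}) and (\ref{e15}), so conditional expectations of bounded continuous functionals converge.

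The heart of the proof is the Cauchy argument. For $\eta\ll r$ and $r'=\sqrt{r}$, Proposition~\ref{l2} yields a coupling of $\nu_\eta$ with $\mu_R^\eta[\cdot\mid\mathcal{A}_k^\eta(r,R)]$ such that, with probability at least $1-r^{\alpha/2}$, the two percolation configurations agree outside a common $k$-circuit $\mathcal{C}\subset A^\eta(r,r')$; for $k=4$ the further agreement of $U_{\mathcal{C}}$ ensures both sides also agree on which blue arcs of $\mathcal{C}$ are joined through the interior, so the cluster connectivity structure outside $\mathcal{C}$ is identical in both configurations (for $k=1,2$ this is automatic from the monochromaticity of the arcs). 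Consequently each cluster boundary not meeting $\mathcal{C}$ is identical, cluster boundaries that cross $\mathcal{C}$ are in bijection with matched portions outside $\mathbb{D}_{r'}$ and ``inside'' arcs lying in $\mathbb{D}_{r'}$, and all remaining loops have diameter at most $2r'$ and are absorbed using the trivial-loop convention of Section~\ref{s21}. After a compatible parametrization this gives $\mathrm{dist}(\omega,\omega')\leq Cr'$ on the good event, hence $\nu_\eta$ and $\mu_R^\eta[\cdot\mid\mathcal{A}_k^\eta(r,R)]$ are $O(\sqrt{r}+r^{\alpha/2})$--close in any bounded metric for weak convergence on $\Omega_R$. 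Combined with the bridging step, any subsequential limit $\nu^\star$ of $\nu_\eta$ is within $O(\sqrt{r}+r^{\alpha/2})$ of $\mu_R[\cdot\mid\mathcal{A}_k(r,R)]$, and sending $r\to 0$ forces uniqueness. Passing $\eta\to 0$ in the same coupling yields the continuum analogue relating $\mu_R[\cdot\mid\mathcal{A}_k(\epsilon,R)]$ to $\mu_R[\cdot\mid\mathcal{A}_k(r,R)]$ for $\epsilon<r$, which identifies every subsequential limit of $\mu_R[\cdot\mid\mathcal{A}_k(\epsilon,R)]$ as the same $\mu_{k,R}$.

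The main technical obstacle is the metric estimate $\mathrm{dist}(\omega,\omega')\leq Cr'$: one must translate the combinatorial statement ``configurations are identical outside $\mathcal{C}$'' into a genuine bound in the Hausdorff-uniform metric on $\Omega_R$. The key points are that (i) the identity of $U_{\mathcal{C}}$ for $k=4$ (and the monochromaticity of the arcs for $k=1,2$) prevents the ``one big loop versus two smaller loops'' topology change that would make paired loops far apart in uniform norm, and (ii) the trivial-loop convention absorbs the possibly many extra small loops generated inside $\mathbb{D}_{r'}$. Keeping the three scales $\eta\ll r\ll\sqrt{r}$ coordinated so that the error bound is uniform in $\eta$ constitutes the remaining bookkeeping.
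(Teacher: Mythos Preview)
Your proposal is correct and follows essentially the same route as the paper: tightness via Lemma~\ref{l6}, the bridging step $\mu_R^\eta[\cdot\mid\mathcal{A}_k^\eta(r,R)]\to\mu_R[\cdot\mid\mathcal{A}_k(r,R)]$ from Theorem~\ref{t2} and Lemma~\ref{l11}, and the coupling of Proposition~\ref{l2} with $r'=\sqrt{r}$ to obtain a $k$-circuit in $A^\eta(r,\sqrt r)$ with identical exterior configuration (and identical $U_{\mathcal C}$ when $k=4$), yielding the Hausdorff-metric bound $\mathrm{dist}\le 2\sqrt r$ and hence uniqueness of the limit. The paper spells out the case analysis $k=1,2,4$ for the metric estimate exactly along the lines you sketch, and concludes directly from the combined coupling inequality (your $O(\sqrt r + r^{\alpha/2})$ bound) rather than phrasing it as a separate continuum Cauchy step, but the content is the same.
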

\begin{proof}
By Theorem \ref{t2} and Lemma \ref{l11}, for any fixed small
$\epsilon>0$ and $\delta>0$, we can couple
$\mu_R^{\eta}[\cdot|\mathcal {A}_k^{\eta}(\epsilon,R)]$ and
$\mu_R[\cdot|\mathcal {A}_k(\epsilon,R)]$ for all $\eta$ small
enough, such that with probability at least $1-\delta$,
\begin{equation}\label{e5}
\dist(\omega_{k,\epsilon,R}^{\eta},\omega_{k,\epsilon,R})\leq
\delta,
\end{equation}
where $\omega_{k,\epsilon,R}^{\eta},\omega_{k,\epsilon,R}$ are the
configurations under these two laws.

By Proposition \ref{l2}, there exists a constant $\alpha>0$ such
that for small $\epsilon>10\eta$, we can couple
$\mu_R^{\eta}[\cdot|\mathcal {A}_k^{\eta}(\epsilon,R)]$ and
$\mu_R^\eta[\cdot|\mathcal {A}_k^{\eta}(\eta,R)]$, such that with
probability at least $1-\epsilon^{\alpha/2}$, there exists an
identical $k$-circuit $\mathcal {C}=\mathcal {C}(\eta,\epsilon)$
surrounding the origin in $A(\epsilon,\sqrt{\epsilon})$ for both
measures, and the configuration outside $\mathcal {C}$ is also
identical, and furthermore $U_{\mathcal {C}}$ is identical in the
case $k=4$.  Observe that when the above event happens,
\begin{equation}\label{e6}
\dist(\omega_{k,\epsilon,R}^{\eta},\omega_{k,\eta,R}^{\eta})\leq
2\sqrt{\epsilon}.
\end{equation}
Let us now explain (\ref{e6}) separately in the three cases.  Assume
that the above event holds.  If $k=1$, any loop from
$\omega_{k,\epsilon,R}^{\eta}$ or $\omega_{k,\eta,R}^{\eta}$ is
either inside or outside $\mathcal {C}$, and the loop configuration
outside $\mathcal {C}$ is identical for
$\omega_{k,\epsilon,R}^{\eta}$ and $\omega_{k,\eta,R}^{\eta}$. Then
(\ref{e6}) holds obviously.  If $k=2$, the loops entirely outside
$\mathcal {C}$ are identical for $\omega_{k,\epsilon,R}^{\eta}$ and
$\omega_{k,\eta,R}^{\eta}$. Furthermore, both
$\omega_{k,\epsilon,R}^{\eta}$ and $\omega_{k,\eta,R}^{\eta}$ have a
unique loop crossing the 2-circuit $\mathcal {C}$ which is composed
of two curves, one outside $\mathcal {C}$ and the other inside, and
the outside one is identical for $\omega_{k,\epsilon,R}^{\eta}$ and
$\omega_{k,\eta,R}^{\eta}$.  From this one gets (\ref{e6}) easily.
Suppose $k=4$, the loops entirely outside $\mathcal {C}$ are
identical for $\omega_{k,\epsilon,R}^{\eta}$ and
$\omega_{k,\eta,R}^{\eta}$.  Furthermore, if $U_{\mathcal {C}}=0$,
both $\omega_{k,\epsilon,R}^{\eta}$ and $\omega_{k,\eta,R}^{\eta}$
have a unique loop crossing the 4-circuit $\mathcal {C}$ which is
composed of four curves, two outside $\mathcal {C}$ and the others
inside, and the outside ones are identical for
$\omega_{k,\epsilon,R}^{\eta}$ and $\omega_{k,\eta,R}^{\eta}$; if
$U_{\mathcal {C}}=1$, both $\omega_{k,\epsilon,R}^{\eta}$ and
$\omega_{k,\eta,R}^{\eta}$ have exactly two loops crossing $\mathcal
{C}$, with each loop composed of two curves, one outside $\mathcal
{C}$ and the other inside, and the outside ones are identical for
$\omega_{k,\epsilon,R}^{\eta}$ and $\omega_{k,\eta,R}^{\eta}$.  Then
one obtains (\ref{e6}).

Combining (\ref{e5}) and (\ref{e6}), for each $\delta>0,\epsilon>0$,
there exists $\eta_0(\delta,\epsilon)>0$ such that for each
$\eta<\eta_0$, we can couple $\mu_R[\cdot|\mathcal
{A}_k(\epsilon,R)]$ and $\mu_R^\eta[\cdot|\mathcal
{A}_k^{\eta}(\eta,R)]$ such that with probability
$1-\delta-\epsilon^{\alpha/2}$,
\begin{equation}\label{e8}
\dist(\omega_{k,\epsilon,R},\omega_{k,\eta,R}^{\eta})\leq
\delta+2\sqrt{\epsilon}.
\end{equation}
Lemma \ref{l6} says that there exist subsequential limits of
$\mu_R^\eta[\cdot|\mathcal {A}_k^{\eta}(\eta,R)]$ as
$\eta\rightarrow 0$, (\ref{e8}) implies the uniqueness of the limit,
and we denote it by $\mu_{k,R}$.  (\ref{e8}) also implies that
$\mu_R[\cdot|\mathcal {A}_k(\epsilon,R)]$ converges in law to
$\mu_{k,R}$ as $\epsilon\rightarrow 0$.
\end{proof}

The conformal invariance of CLE$_6$ is expressed in the following
theorem, which will be used in the proof of Theorem \ref{t6}.

\begin{theorem}[\cite{32}]\label{t3}
Let $D,D'$ be two Jordan domains and let $f:\overline{D}\rightarrow
\overline{D'}$ be a continuous function that maps $D$ conformally
onto $D'$.  Then the CLE$_6$ in $\overline{D'}$ is distributed like
the image under $f$ of the CLE$_6$ in $\overline{D}$.
\end{theorem}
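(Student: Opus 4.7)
The plan is to bootstrap from the conformal invariance of a single chordal SLE$_6$ curve to the conformal invariance of the entire CLE$_6$ loop ensemble, using the iterative exploration-tree construction of the full scaling limit due to Camia and Newman.

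First I would invoke Smirnov's theorem on single interfaces: for any Jordan domain $D$ and any two distinct boundary prime ends $a,b$, the percolation exploration path $\gamma^\eta_{D,a,b}$ converges in law, in the uniform metric modulo monotone reparametrization, to chordal SLE$_6$ in $\overline{D}$ from $a$ to $b$. Chordal SLE$_6$ is defined in the upper half-plane by the Loewner equation driven by $\sqrt{6}\,B_t$, and in an arbitrary simply connected domain it is \emph{defined} as the conformal image of this half-plane SLE. Hence if $f:\overline{D}\to\overline{D'}$ is continuous and conformal on $D$, then $f\circ\gamma$ has the law of chordal SLE$_6$ in $\overline{D'}$ from $f(a)$ to $f(b)$. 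This gives the base case of conformal invariance.

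Next I would recall from \cite{32,3} that CLE$_6$ in $\overline{D}$ can be produced by an exploration tree. One starts by sampling a single chordal SLE$_6$ from a boundary point $a_0$ to another boundary point $b_0$, which traces out portions of the outer boundaries of all clusters it meets; the complement of this path is a countable collection of sub-domains with canonically marked boundary points, and in each of them one independently runs another chordal SLE$_6$, iterating to all depths. Every CLE$_6$ loop is almost surely produced as the concatenation of finitely many SLE$_6$ arcs from this tree. Applying $f$ to the whole construction in $D$ transports sub-domains to sub-domains, marked points to marked points, and --- by the first step --- each individual SLE$_6$ arc to a chordal SLE$_6$ in its image sub-domain with the correct endpoints. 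Because the exploration tree determines the loop ensemble measurably from its SLE$_6$ branches, the image under $f$ of CLE$_6$ in $\overline{D}$ has the law of CLE$_6$ in $\overline{D'}$.

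The main obstacle is to make the exploration-tree step fully rigorous: one must establish uniform tightness for the joint law of all tree branches on the discrete level (via Aizenman--Burchard bounds on multi-arm crossings of annuli, as already invoked in Lemma \ref{l6}), show precompactness of the resulting laws on collections of curves in the metric (\ref{e2}), and identify any subsequential limit branch by branch with a chordal SLE$_6$ in the appropriate sub-domain using Smirnov's theorem together with the domain Markov property. Finally one must verify that the loops reconstructed from the tree exhaust all CLE$_6$ loops of Theorem \ref{t2}, so that no loops are ``lost'' in the limit; this identification is what ensures that the conformal transport performed in the previous paragraph commutes with the scaling limit and delivers the stated equality of distributions.
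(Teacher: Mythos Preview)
This theorem is not proved in the present paper at all: it is quoted verbatim from Camia--Newman \cite{32} (note the attribution in the theorem header), and the paper simply invokes it as a black box in the proof of Theorem~\ref{t6}. So there is no ``paper's own proof'' to compare against.

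That said, your sketch is broadly faithful to how Camia and Newman themselves establish conformal invariance in \cite{3,32}: the full scaling limit is built as an iterated exploration by chordal SLE$_6$ curves, and since each branch is conformally invariant by definition and the branching rule (which sub-domains arise, which marked boundary points are used) is purely topological, the whole ensemble inherits conformal invariance. Your final paragraph correctly flags the genuine work --- tightness of the joint law of all branches, identification of subsequential limits via Smirnov, and the verification that the exploration recovers \emph{all} loops of the limiting ensemble --- and these are exactly the technical ingredients supplied in \cite{3,32}. One point worth sharpening: you should be careful that the argument does not become circular. CLE$_6$ in $\overline{D}$ is \emph{defined} here as the scaling limit $\mu_D$ of Theorem~\ref{t2}, not as the output of the continuum exploration tree; so one must first prove that the exploration-tree construction produces a measure equal to $\mu_D$ before the conformal-transport argument can be applied to $\mu_D$ itself.
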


To prove Theorem \ref{t6}, we also need the following lemma about
conformal transformations, which is Corollary 3.25 in \cite{17}.

\begin{lemma}[\cite{17}]\label{l1}
Let $D,D'$ be two Jordan domains.  If $f: D\rightarrow D'$ is a
conformal transformation with $z\in D$, then for all $0<r<1$ and all
$|w-z|\leq r \dist(z,\partial D)$,
\begin{equation*}
|f(w)-f(z)|\leq \frac{4|w-z|}{(1-r)^2}\frac{\dist(f(z),\partial
D')}{\dist(z,\partial D)}.
\end{equation*}
\end{lemma}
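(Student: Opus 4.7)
The plan is to reduce to a normalized situation on the unit disk and apply two classical facts about univalent holomorphic functions: the Koebe distortion theorem and the Koebe one-quarter theorem. Both are stated for univalent maps $g$ on $\mathbb{D}$ with $g(0)=0$ and $g'(0)=1$, which suggests the correct rescaling.

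First I would set $d := \dist(z,\partial D)$, so that $\{\zeta : |\zeta - z| < d\} \subset D$, and define
\[
g(\zeta) := \frac{f(z + d\zeta) - f(z)}{d\, f'(z)}, \qquad \zeta \in \mathbb{D}.
\]
Then $g$ is univalent on $\mathbb{D}$ with $g(0)=0$ and $g'(0)=1$. The Koebe distortion theorem gives $|g(\zeta)| \leq |\zeta|/(1-|\zeta|)^2$. Substituting $\zeta = (w-z)/d$, which has modulus at most $r<1$ by the hypothesis $|w-z|\leq r\,\dist(z,\partial D)$, yields
\[
|f(w) - f(z)| \;\leq\; \frac{|f'(z)|\,|w-z|}{(1-r)^2}.
\]

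Next I would bound $|f'(z)|$ in terms of the two distances. The Koebe one-quarter theorem applied to $g$ says that $g(\mathbb{D})$ contains the disk of radius $1/4$ around $0$, so translating back, $f$ maps $\{|\zeta-z|<d\}$ onto a set containing the Euclidean disk of radius $d|f'(z)|/4$ around $f(z)$. Since that image lies in $D'$, the radius cannot exceed $\dist(f(z),\partial D')$, which gives
\[
|f'(z)| \;\leq\; \frac{4\,\dist(f(z),\partial D')}{\dist(z,\partial D)}.
\]
Inserting this into the previous display produces exactly the claimed inequality.

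The argument is essentially textbook once the correct normalization is in place; there is no substantive obstacle. The only conceptual step is recognizing that the factor $4/(1-r)^2$ decomposes naturally as a product of a Koebe one-quarter contribution (yielding the $4$ and the ratio of boundary distances) and a Koebe distortion contribution (yielding the $(1-r)^{-2}$ and the factor $|w-z|$). If one instead tries to bound $|f(w)-f(z)|$ by integrating $|f'|$ along the segment from $z$ to $w$ without first localizing, one loses the right dependence on $r$, so the choice to first rescale to $\mathbb{D}$ and apply the distortion theorem globally on that disk is the key move.
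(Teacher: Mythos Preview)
Your argument is correct and is precisely the standard derivation of this estimate from the Koebe distortion and one-quarter theorems. The paper itself does not prove the lemma at all; it merely quotes it as Corollary~3.25 of Lawler's book \cite{17}, so there is nothing to compare against beyond noting that your proof is the textbook one.
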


\begin{proof}[Proof of Theorem \ref{t6}]
The first part of Theorem \ref{t6} is a generalization of Lemma
\ref{l4}. Its proof is basically the same as for Lemma \ref{l4}, and
we omit it.  Now we show the second part.  For any small
$\epsilon>0$, by the definitions of $f$ and $z'$, it is easy to see
that $f(\mathbb{D}(z;\epsilon))$ is a Jordan domain, $z'\in
f(\mathbb{D}(z;\epsilon))$ and
$\overline{f(\mathbb{D}(z;\epsilon))}\subset D'$. Further, Lemma
\ref{l1} implies that the diameter of $f(\mathbb{D}(z;\epsilon))$
converges to zero as $\epsilon\rightarrow 0$.  By the definitions of
arm events and Theorem \ref{t3}, the image of $\mu_D[\cdot|\mathcal
{A}_k(\overline{D}\backslash \mathbb{D}(z;\epsilon))]$ under $f$ has
the same law as $\mu_{D'}[\cdot|\mathcal
{A}_k(\overline{D'}\backslash f(\mathbb{D}(z;\epsilon)))]$.  Then
the first part of Theorem \ref{t6} implies the second part of
Theorem \ref{t6} immediately.
\end{proof}

\begin{proof}[Proof of Theorem \ref{t4}]
By Proposition \ref{l3}, given any bounded domain $D$, for each
$\epsilon>0$, there exists a $R_0(D,\epsilon)>0$, such that for any
$R_2>R_1>R_0$ and any small enough $\eta$,  we can couple
$\mu_{R_1}^{\eta}[\cdot|\mathcal {A}_k^{\eta}(\eta,R_1)]$ and
$\mu_{R_2}^{\eta}[\cdot|\mathcal {A}_k^{\eta}(\eta,R_2)]$ such that
with probability at least $1-\epsilon$, the cluster boundaries or
portions of boundaries contained in $\overline{D}$ are identical.
Therefore, letting $\eta\rightarrow 0$ and using Lemma \ref{l4},
there is a coupling between $\mu_{k,R_1}$ and $\mu_{k,R_2}$, such
that with probability at least $1-\epsilon$ the loops or portions of
loops contained in $\overline{D}$ are identical.  Taking
$\epsilon\rightarrow 0$ and $R=R(\epsilon)\rightarrow \infty$, we
get that $\hat{I}_D\mu_{k,R}$ converges in law to a probability
measure.  For $D=\mathbb{D}_r$, we denote the above limiting measure
by $\mu_{k,r}'$. The above argument also implies that $\mu_{k,r}'$
on $(\Omega_r, \mathcal {B}_r)$, for $r>0$, satisfy the consistency
$\mu_{k,r_1}'=\hat{I}_{\mathbb{D}_{r_1}}\mu_{k,r_2}'$ conditions for
all $0<r_1<r_2$.  Then using Kolmogorov's extension theorem (see,
e.g., \cite{18}) we conclude that there exists a unique probability
measure $\mu_k$ on $(\Omega,\mathcal {B})$ with
$\mu_{k,r}'=\hat{I}_{\mathbb{D}_{r}}\mu_k$ for all $r>0$.  For any
domain $D\subset \mathbb{D}_r$, the above discussion implies that as
$R\rightarrow\infty$, $\hat{I}_D\mu_{k,R}\rightarrow
\hat{I}_D\mu_{k,r}'=\hat{I}_D\mu_k$.

By Lemma \ref{l6}, we let $\{\eta_j\}$ be a convergent subsequence
for $\mu_k^{\eta}$ and let $\mu_k'$ be the limit in distribution of
$\mu_k^{\eta_j}$ as $\eta_j\rightarrow 0$.  Now we show
$\mu_k'=\mu_k$.  To achieve this, it is enough to prove that
$\hat{I}_{\mathbb{D}_r}\mu_k'=\hat{I}_{\mathbb{D}_r}\mu_k$ for all
$r>0$, which is achieved as follows.

By the definition of $\mu_k^{\eta}$, for each $\epsilon>0$, there
exist $\eta_0>0,R_0>0$, such that for all $\eta<\eta_0$ and all
$R>R_0$,  we can couple $\hat{I}_{\mathbb{D}_r}\mu_k^{\eta}$ and
$\hat{I}_{\mathbb{D}_r}\mu_R^{\eta}[\cdot|\mathcal
{A}_k^{\eta}(\eta,R)]$ such that with probability at least
$1-\epsilon$,
\begin{equation*}
\textrm{Dist}(\omega_{k,r}^{\eta},\omega_{k,r,R}^{\eta})\leq
\epsilon,
\end{equation*}
where $\omega_{k,r}^{\eta},\omega_{k,r,R}^{\eta}$ are the
configurations under these two laws.  Using Lemma \ref{l4} and the
definition of $\mu_k'$, by taking $\eta_j\rightarrow 0$, we can
couple $\hat{I}_{\mathbb{D}_r}\mu_k'$ and
$\hat{I}_{\mathbb{D}_r}\mu_{k,R}$ such that with probability at
least $1-\epsilon$,
\begin{equation*}
\textrm{Dist}(\omega_{k,r}',\omega_{k,r,R})\leq \epsilon,
\end{equation*}
where $\omega_{k,r}',\omega_{k,r,R}$ are the configurations under
these two laws.  Taking $R\rightarrow \infty$ and then
$\epsilon\rightarrow 0$, by the first part of the proof of Theorem
\ref{t4}, we have
$\hat{I}_{\mathbb{D}_r}\mu_k'=\hat{I}_{\mathbb{D}_r}\mu_k$.
\end{proof}

\section{Winding numbers}\label{s4}

In this section we will prove our main results concerning the
variance estimate and CLT for winding numbers of the arms in the
2-arm case.  We will use two-sided radial SLE$_6$, which is
introduced below.  We assume that the reader is familiar with the
basic theory of SLE. (See, for instance, Lawler's book \cite{17}.)
For the basic results regarding two-sided radial SLE, we refer to
\cite{48,49,51,53}.

\subsection{Winding for two-sided SLE}\label{s41}

To introduce two-sided radial SLE,  we need the notion of Green's
function for chordal SLE.  Roughly speaking, the Green's function
gives the normalized probability that the chordal SLE path goes
through an interior point. Before stating the precise definition, we
set up some notation.  If $D$ is a simply connected domain with
$z\in D$, we let $\Upsilon_D(z)$ be twice the conformal radius of
$z$ in $D$; that is, if $f:\mathbb{D}\rightarrow D$ is a conformal
transformation with $f(0)=z$, then $\Upsilon_D(z)=2|f'(0)|$. Suppose
$0<\kappa<8$, $a,b\in \partial D$, let $\gamma=\gamma_{D,a,b}$
denote chordal SLE$_{\kappa}$ path from $a$ to $b$ in
$\overline{D}$.  Let $D_{\infty}$ denote the component of
$D\backslash \gamma$ containing $z$.  The \emph{Green's function}
$G_D(z;a,b)$ for $\gamma$ is defined by
\begin{equation*}
\lim_{\epsilon\rightarrow
0}\epsilon^{d-2}P[\Upsilon_{D_{\infty}}(z)<\epsilon]=C_*G_D(z;a,b),
\end{equation*}
where $d:=1+\kappa/8$ is the Hausdorff dimension of SLE$_{\kappa}$
path, $C_*:=2[\int_0^{\pi}\sin^{8/\kappa}x d x]^{-1}$.  See, e.g.,
\cite{53} and Proposition 2.2 in \cite{51}.   In fact, for the
Euclidean distance, there also exists a constant $\hat{C}>0$ (the
value of $\hat{C}$ is unknown) such that
\begin{equation*}
\lim_{\epsilon\rightarrow
0}\epsilon^{d-2}P[\dist(z,\gamma)<\epsilon]=\hat{C}G_D(z;a,b).
\end{equation*}
Furthermore, Lawler and Rezaei proved that the Green's function
satisfies the following proposition (Theorem 2.3 in \cite{51}, see
also Theorem 2.3 in \cite{52}):
\begin{proposition}[\cite{51}]\label{p1}
Suppose $0<\kappa<8$. There exist $0<\hat{C},C,u<\infty$ (depending
on $\kappa$) such that the following holds.  Suppose $D$ is a simply
connected domain, $z\in D,a,b,\in\partial D$ and $\gamma$ is a
chordal SLE$_{\kappa}$ path from $a$ to $b$ in $\overline{D}$. Then,
for all $0<\epsilon<\dist(z,\partial D)/10$,
\begin{align*}
\left|\frac{P[\dist(z,\gamma)\leq\epsilon]}{\epsilon^{2-d}G_D(z;a,b)}-\hat{C}\right|\leq
C\left(\frac{\epsilon}{\dist(z,\partial D)}\right)^u.
\end{align*}
\end{proposition}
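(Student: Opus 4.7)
The plan is to follow the Girsanov/radial martingale strategy that underlies the classical proof that the Green's function limit exists, but to keep track of all error terms carefully enough to upgrade that existence statement to a quantitative rate. First I would reduce, by conformal invariance of chordal SLE$_\kappa$ and of $G_D$, to the normalized setting where $D=\mathbb{D}$, $z=0$, $a=1$, and $b$ is an arbitrary prime end; Koebe's distortion and Lemma \ref{l1} guarantee that conformal pullback produces only multiplicative distortions that are $1+O(\epsilon/\dist(z,\partial D))$, which is absorbed in the target power $u$. After this reduction, $\Upsilon_{D_t}(0)=e^{-a(t)}$ for an explicit time change, where $D_t$ is the connected component of $0$ in $\mathbb{D}\setminus\gamma[0,t]$, so it suffices to estimate $P[\Upsilon_{D_\infty}(0)\leq \epsilon]$ up to a multiplicative error of the required size, since Koebe's one-quarter/growth theorems convert between $\dist(0,\gamma)$ and $\Upsilon_{D_\infty}(0)$ with only bounded multiplicative constants depending on $\kappa$.

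Next I would write chordal SLE$_\kappa$ in radial coordinates seen from $0$. Letting $\theta_t$ denote the argument of $g_t(0)$ relative to the driving point, a direct It\^o computation shows that
\begin{equation*}
M_t:=\Upsilon_{D_t}(0)^{2-d}\,\sin^{8/\kappa}(\theta_t)
\end{equation*}
is a positive local martingale (up to the disconnection time of $0$). The weighting by $M$ is exactly Girsanov's transform that turns chordal SLE$_\kappa$ from $a$ to $b$ targeted at $0$ into two-sided radial SLE$_\kappa$ through $0$ (this is the content of the standard identification in \cite{48,49,53}); under the new measure the path almost surely accumulates at $0$ and the angular process $\theta_t$ becomes a positive recurrent diffusion on $(0,\pi)$ with an explicit invariant density proportional to $\sin^{8/\kappa}$. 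I would then define the stopping time $\tau_\epsilon:=\inf\{t:\Upsilon_{D_t}(0)\leq\epsilon\}$ and apply optional stopping to $M$, yielding
\begin{equation*}
P[\Upsilon_{D_\infty}(0)\leq\epsilon]\;=\;\epsilon^{2-d}\,E\bigl[\sin^{-8/\kappa}(\theta_{\tau_\epsilon})\,M_{\tau_\epsilon}\bigr]\,/\,\text{(normalizing factor)},
\end{equation*}
where the right-hand expectation is, up to harmless re-parametrizations, the expected value of a bounded functional of $\theta_{\tau_\epsilon}$ under the two-sided radial measure.

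The third and main step is to show that the law of $\theta_{\tau_\epsilon}$ under the weighted (two-sided radial) measure is close in total variation to its stationary law $\sin^{8/\kappa}/\int\sin^{8/\kappa}$ with error $O((\epsilon/\dist(z,\partial D))^u)$; substituting this stationary average precisely produces $\hat C\, G_D(z;a,b)$ after undoing the conformal reduction, with $\hat C=2/\int_0^\pi\sin^{8/\kappa}$. Exponential mixing for $\theta_t$ towards its invariant measure follows from a Lyapunov/Doeblin argument for a one-dimensional diffusion on a bounded interval with non-degenerate diffusion coefficient and drift that pushes off the boundary; in radial time this gives exponential convergence at some rate $c>0$, and since the radial time elapsed from scale $\dist(z,\partial D)$ to scale $\epsilon$ is of order $\log(\dist(z,\partial D)/\epsilon)$, exponential mixing translates into a polynomial error $(\epsilon/\dist(z,\partial D))^u$ with $u=c/a$. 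The main obstacle I anticipate is making this last coupling quantitative uniformly in the initial angle $\theta_0$ (which can be arbitrarily close to $0$ or $\pi$ when $\dist(0,\partial\mathbb{D})$ is close to $1$ after the conformal reduction): one needs a separation/inner-radius estimate showing that after a bounded radial time the conditioned process has entered a compact sub-interval of $(0,\pi)$ with overwhelming probability, and this in turn requires the uniform one-sided derivative estimates for radial SLE$_\kappa$ from \cite{17,51}. Once that separation lemma is in hand, the Doeblin coupling and the optional-stopping identity combine into the stated bound.
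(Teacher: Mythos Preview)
The paper does not give its own proof of this proposition; it is quoted verbatim from Lawler--Rezaei \cite{51} (their Theorem 2.3) and used as a black box. So there is no paper argument to compare against, and I will assess your outline on its merits.

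The Girsanov/radial-martingale strategy you describe is the correct one for the \emph{conformal-radius} version of the result, i.e., for
\[
P[\Upsilon_{D_\infty}(z)\le\epsilon]\;=\;C_*\,\epsilon^{2-d}\,G_D(z;a,b)\bigl(1+O((\epsilon/\dist(z,\partial D))^u)\bigr),
\]
and your sketch of exponential mixing of the angular diffusion $\theta_t$ toward $\sin^{8/\kappa}$ is exactly how that quantitative rate is obtained. The gap is in your passage from conformal radius to Euclidean distance. Koebe only gives the two-sided bound $\Upsilon_{D_\infty}(0)/4\le\dist(0,\gamma)\le\Upsilon_{D_\infty}(0)$, so the events $\{\dist\le\epsilon\}$ and $\{\Upsilon\le\epsilon\}$ are comparable only up to a fixed multiplicative factor $4^{2-d}$ in probability. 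That factor is bounded but does \emph{not} tend to $1$ as $\epsilon\to0$, so it cannot yield an asymptotic with a single constant $\hat C$. The paper itself flags this distinction: $C_*$ is explicit while $\hat C$ is stated to be unknown, which already tells you the Euclidean statement is not a trivial corollary of the conformal-radius one.

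What is actually needed, beyond mixing of the scalar $\theta_t$, is convergence of the entire local geometry of the slit domain near $0$ (equivalently, of the rescaled map $z\mapsto \epsilon^{-1} g_{\tau_\epsilon}^{-1}(\epsilon z)$) to a stationary law under the two-sided radial measure. The Euclidean hitting probability is then an expectation, under that stationary law, of a geometric functional of this limiting shape, and that expectation is the unknown $\hat C$. Your outline handles the one-dimensional angular coordinate; to reach the Euclidean statement you must upgrade the coupling to the full conformal map near the target point, which is the additional work carried out in \cite{51}.
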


Assume $0<\kappa<8$ and $0<\alpha<2\pi$.  Roughly speaking, a
two-sided radial SLE$_{\kappa}$ path from 1 to $e^{i\alpha}$ through
0 in $\overline{\mathbb{D}}$ can be thought of as a chordal
SLE$_{\kappa}$ path $\gamma$ from $1$ to $e^{i\alpha}$ in
$\overline{\mathbb{D}}$, conditioned to pass through $0$ (see
Proposition \ref{p2} below).  The curve can be defined by weighting
$\gamma$ in the sense of the Girsanov theorem by Green's function in
the slit domain at 0.  More precisely, we parametrize $\gamma$ by
the radial parametrization (i.e., $g_t'(0)=e^t$), and let
$M_t:=G_{\mathbb{D}\backslash
\gamma[0,t]}(0;\gamma(t),e^{i\alpha})$, which is a local martingale.
Then using Girsanov's theorem, we can define a new probability
measure $P^*$ which corresponds to paths ``weighted locally by
$M_t$".  That is,
\begin{equation*}
P^*[V]=M_0^{-1}E[M_t1_V]~~\mbox{for $V\in\mathcal {F}_t$},
\end{equation*}
where $E$ denotes expectation with respect to $P$, $\mathcal {F}_t$
denotes the $\sigma$-algebra generated by $\{\hat{W}_s,0\leq s\leq
t\}$, and $\hat{W}$ is a standard Brownian motion and is the driving
function of $\gamma$.

Explicitly, if $\gamma$ denotes the two-sided radial SLE$_{\kappa}$
path from 1 to $e^{i\alpha}$ through 0 in $\overline{\mathbb{D}}$
stopped when it reaches $0$, $D_t$ denotes the connected component
of $\mathbb{D}\backslash\gamma(0,t]$ containing the origin, and
$g_t$ (two-sided radial SLE$_{\kappa}$): $D_t\rightarrow \mathbb{D}$
is the conformal transformation with $g_t(0)=0,g_t'(0)=e^t$, then
$g_t$ can be obtained from solving the initial value problem
\begin{align}
&\partial_tg_t(z)=g_t(z)\frac{e^{iU_t}+g_t(z)}{e^{iU_t}-g_t(z)},\label{e17}\\
&d\Theta_t=2\cot\left[\frac{\Theta_t}{2}\right]dt+\sqrt{\kappa}dW_t,~~\Theta_0=\alpha,~~-dU_t=\cot\left[\frac{\Theta_t}{2}\right]dt+\sqrt{\kappa}dW_t,\label{e27}
\end{align}
where $W$ is a standard Brownian motion with respect to $P^*$.
Further, if we write $g_t(e^{i\alpha})=e^{iV_t}$, then
\begin{equation}\label{e16}
\Theta_t=V_t-U_t.
\end{equation}
Note that we write the equation slightly differently than in
\cite{48,49}, where the authors added a parameter $a=2/\kappa$ that
gives a linear time change, and wrote $2U_t$ in the exponent in
(\ref{e17}).

Given $r>0$ and a curve $\gamma$, let $\tau_r=\tau_r(\gamma)$ be the
first hitting time with $\partial \mathbb{D}_r$ of $\gamma$.  The
following proposition is the analog of Proposition 2.13 in
\cite{53}, replacing conformal radius with Euclidean distance.  The
justification for calling two-sided radial SLE ``chordal SLE
conditioned to go through an interior point" comes from this
proposition.
\begin{proposition}\label{p2}
Let $0<\kappa<8$ and $0<\alpha<2\pi$.  There exist $0<u,C<\infty$
(depending on $\kappa$) such that the following is true.  Suppose
$\gamma$ is a chordal SLE$_{\kappa}$ path from $1$ to $e^{i\alpha}$.
Suppose $0<\epsilon<1/10,0<\epsilon'<\epsilon/10$.  Let $\mu',\mu^*$
be the two probability measures on $\{\gamma(t):0\leq
t\leq\tau_{\epsilon}\}$ corresponding to chordal SLE$_{\kappa}$
conditioned on the event $\{\tau_{\epsilon'}<\infty\}$ and two-sided
radial SLE$_{\kappa}$ through $0$, respectively.  Then $\mu',\mu^*$
are mutually absolutely continuous with respect to each other and
the Radon-Nikodym derivative satisfies
\begin{align*}
\left|\frac{d\mu^*}{d\mu'}-1\right|\leq
C\left(\frac{\epsilon'}{\epsilon}\right)^u.
\end{align*}
\end{proposition}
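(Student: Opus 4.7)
The plan is to express both $\mu^*$ and $\mu'$ as Radon--Nikodym densities with respect to the unconditioned chordal SLE$_\kappa$ measure $P$ restricted to $\mathcal{F}_{\tau_\epsilon}$, and to show that these two densities agree up to a factor $1+O((\epsilon'/\epsilon)^u)$ by applying Proposition \ref{p1} twice. By the Girsanov definition of two-sided radial SLE$_\kappa$ recalled above, for every $V\in\mathcal{F}_{\tau_\epsilon}$ one has $\mu^*[V]=M_0^{-1}E_P[1_V M_{\tau_\epsilon}]$, where $M_t=G_{\mathbb{D}\setminus\gamma[0,t]}(0;\gamma(t),e^{i\alpha})$. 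On the other hand, by the strong Markov property of chordal SLE,
$$\mu'[V]=\frac{P[V\cap\{\tau_{\epsilon'}<\infty\}]}{P[\tau_{\epsilon'}<\infty]}=\frac{E_P[1_V\,h(\tau_\epsilon)]}{\bar h},$$
where $\bar h:=P[\tau_{\epsilon'}<\infty]$ and $h(\tau_\epsilon):=P[\tau_{\epsilon'}<\infty\mid\mathcal{F}_{\tau_\epsilon}]$ equals, by the conformal Markov property, the probability that the continuation of $\gamma$ in the slit domain $D_{\tau_\epsilon}:=\mathbb{D}\setminus\gamma[0,\tau_\epsilon]$ comes within Euclidean distance $\epsilon'$ of the origin. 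Taking the ratio of the two densities gives the pointwise identity
$$\frac{d\mu^*}{d\mu'}=\frac{\bar h}{M_0}\cdot\frac{M_{\tau_\epsilon}}{h(\tau_\epsilon)}.$$

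Next, Proposition \ref{p1} is applied twice. In the full disc $\mathbb{D}$, where $\dist(0,\partial\mathbb{D})=1$, it yields $\bar h=\hat C(\epsilon')^{2-d}M_0(1+\delta_0)$ with $|\delta_0|\leq C(\epsilon')^u$. For the slit domain $D_{\tau_\epsilon}$ the key geometric observation is that $\dist(0,\partial D_{\tau_\epsilon})$ equals $\epsilon$ exactly: since $\tau_\epsilon$ is the first hitting time of $\partial\mathbb{D}_\epsilon$, the trace $\gamma[0,\tau_\epsilon]$ lies in $\{|z|\geq\epsilon\}$ while $\gamma(\tau_\epsilon)\in\partial\mathbb{D}_\epsilon$. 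Applying Proposition \ref{p1} to $D_{\tau_\epsilon}$ with boundary points $\gamma(\tau_\epsilon)$ and $e^{i\alpha}$ then gives $h(\tau_\epsilon)=\hat C(\epsilon')^{2-d}M_{\tau_\epsilon}(1+\delta_1)$ with $|\delta_1|\leq C(\epsilon'/\epsilon)^u$, uniformly in the realization of $\gamma[0,\tau_\epsilon]$. Substituting into the ratio and using $(\epsilon')^u\leq(\epsilon'/\epsilon)^u$ (which holds because $\epsilon<1$) yields $|d\mu^*/d\mu'-1|\leq C'(\epsilon'/\epsilon)^u$ after a routine bound on $(1+\delta_0)/(1+\delta_1)-1$, establishing both mutual absolute continuity and the claimed inequality at once.

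The main obstacle is ensuring the uniformity of the Lawler--Rezaei estimate in the random slit domain $D_{\tau_\epsilon}$, which is precisely why the identity $\dist(0,\partial D_{\tau_\epsilon})=\epsilon$ (not merely $\leq\epsilon$) is essential: it supplies a deterministic lower bound on the distance to the boundary, independent of the realization of $\gamma[0,\tau_\epsilon]$, and produces the sharp exponent $u$ rather than a path-dependent error. A secondary technical point is invoking the conformal Markov property of chordal SLE$_\kappa$ to recognise $P[\tau_{\epsilon'}<\infty\mid\mathcal{F}_{\tau_\epsilon}]$ as a Green's-function-weighted hitting probability in $D_{\tau_\epsilon}$; this is standard. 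Once these ingredients are in place, the remainder of the argument is essentially algebraic manipulation of the explicit formulas for $\mu^*$ and $\mu'$.
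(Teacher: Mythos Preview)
Your proposal is correct and follows essentially the same route as the paper's proof: both express $\mu^*$ and $\mu'$ as densities with respect to the unconditioned chordal law via the martingale $M_t$ and the conditional hitting probability, respectively, and then invoke Proposition~\ref{p1} once in $\mathbb{D}$ and once in the slit domain $D_{\tau_\epsilon}$ to compare the two. Your write-up is in fact slightly more explicit than the paper's in isolating the geometric fact $\dist(0,\partial D_{\tau_\epsilon})=\epsilon$, which is exactly what makes the error term $C(\epsilon'/\epsilon)^u$ uniform over realizations of $\gamma[0,\tau_\epsilon]$.
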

\begin{proof}
Let $P[\cdot]$ denote the law of the entire $\gamma$ and let
$P_{\epsilon}[\cdot]$ denote the law of $\{\gamma(t):0\leq
t\leq\tau_{\epsilon}\}$ restricted to the event
$\{\tau_{\epsilon}<\infty\}$.  From the definitions of $\mu',\mu^*$,
we know that
\begin{equation*}
d\mu^*=\frac{M_{\tau_{\epsilon}}}{M_0}dP_{\epsilon},~~d\mu'=\frac{P[\dist(0,\gamma)\leq\epsilon'|\gamma[0,\tau_{\epsilon}]]}{P[\dist(0,\gamma)\leq\epsilon']}dP_{\epsilon}.
\end{equation*}
So $\mu'$ and $\mu^*$ are mutually absolutely continuous.  Denote by
$E$ the expectation with respect to $P$, by $\mathcal
{F}_{\epsilon}$ the $\sigma$-algebra generated by
$\gamma[0,\tau_{\epsilon}]$, by $T$ the time that $\gamma$ reaches
$e^{i\alpha}$, by $P_{\mathbb{D}\backslash
\gamma[0,\tau_{\epsilon}]}$ the law of $\gamma[\tau_{\epsilon},T]$.
Using Proposition \ref{p1}, we have that for each $V\in\mathcal
{F}_{\epsilon}$,
\begin{align*}
&\mu^*(V)=M_0^{-1}E[M_{\tau_{\epsilon}}1_V]\\
&=G_{\mathbb{D}}(0;1,e^{i\alpha})^{-1}E[G_{\mathbb{D}\backslash
\gamma[0,\tau_{\epsilon}]}(0;\gamma(\tau_{\epsilon}),e^{i\alpha})1_V]\\
&=G_{\mathbb{D}}(0;1,e^{i\alpha})^{-1}E[E[G_{\mathbb{D}\backslash
\gamma[0,\tau_{\epsilon}]}(0;\gamma(\tau_{\epsilon}),e^{i\alpha})|\mathcal
{F}_{\epsilon}]1_V]\\
&=(\epsilon')^{2-d}\hat{C}(1+O((\epsilon')^u))P[\dist(0,\gamma)\leq\epsilon']^{-1}\times\\
&~~~~~E[E[(\epsilon')^{d-2}\hat{C}^{-1}(1+O((\epsilon'/\epsilon)^u))P_{\mathbb{D}\backslash
\gamma[0,\tau_{\epsilon}]}[\dist(0,\gamma[\tau_{\epsilon},T])\leq\epsilon']|\mathcal
{F}_{\epsilon}]1_V]\\
&=(1+O((\epsilon'/\epsilon)^u))P[\dist(0,\gamma)\leq\epsilon']^{-1}E[E[P_{\mathbb{D}\backslash
\gamma[0,\tau_{\epsilon}]}[\dist(0,\gamma[\tau_{\epsilon},T])\leq\epsilon']|\mathcal
{F}_{\epsilon}]1_V]\\
&=(1+O((\epsilon'/\epsilon)^u))\mu'(V).
\end{align*}
Then the result follows from the above inequality.
\end{proof}

The following lemma for two-sided radial SLE is an analog of Theorem
7.2 for radial SLE in \cite{47}.

\begin{lemma}\label{l12}
Let $0<\kappa<8$.  Suppose $\gamma$ is a two-sided radial
SLE$_\kappa$ path from $1$ to $-1$ through $0$ in
$\overline{\mathbb{D}}$ stopped when it reaches $0$.  Let $T\geq 0$,
and $\theta_{\kappa}(T)$ be the winding number of the path
$\gamma[0,T]$ around $0$.  Then there exist constants $C_0,C_1>0$
depending only on $\kappa$, such that for all $s>0$,
\begin{equation}\label{e21}
P^*\left[\big|T+\log|\gamma(T)|\big|>s\right]\leq C_0\exp(-C_1s),
\end{equation}
and
\begin{equation}\label{e22}
P^*\left[\left|\theta_{\kappa}(T)+\frac{\sqrt{\kappa}}{2}W_T\right|>s\right]\leq
C_0\exp(-C_1s).
\end{equation}
\end{lemma}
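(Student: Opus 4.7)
The plan is to follow Schramm's argument for ordinary radial SLE$_\kappa$ (Theorem 7.2 of \cite{47}), adapting it to the two-sided radial setting. The only structural change in the two-sided version is the additional Girsanov drift $-\cot(\Theta_t/2)\,dt$ in the driving function $U_t$ prescribed by (\ref{e27}). With $\alpha = \pi$, the process $\Theta_t$ is a Bessel-type diffusion on $(0,2\pi)$, repelled from both endpoints (since $\cot(\Theta_t/2) \sim 2/\Theta_t$ near $0$ and $\sim -2/(2\pi-\Theta_t)$ near $2\pi$) and attracted to $\pi$. In particular, $\Theta_t$ does not approach $\{0,2\pi\}$ in an uncontrolled way, and $\int_0^T \cot^2(\Theta_t/2)\,dt$ possesses exponential moments uniform in $T$ on compact intervals. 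This integrability will drive all the error bounds.

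For (\ref{e21}), one direction is immediate: by the Koebe $1/4$ theorem applied to $f_T := g_T^{-1}:\mathbb{D}\to D_T$ (which satisfies $f_T(0)=0$ and $|f_T'(0)|=e^{-T}$), we have $\mathrm{dist}(0,\partial D_T)\geq e^{-T}/4$, hence $T+\log|\gamma(T)|\geq -\log 4$. The other direction, controlling how large $|\gamma(T)|$ can be relative to $e^{-T}$, is the substantive part. I would introduce $X_t := \log|\gamma(t)| + t$ and derive an SDE for it using the inverse radial Loewner equation $\partial_t f_t(w) = -w f_t'(w)(e^{iU_t}+w)/(e^{iU_t}-w)$, evaluated at the prime end $w = e^{iU_t}$. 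The drift and martingale parts of $X_t$ are expressible in terms of $\Theta_t$, and exponential-moment bounds on $X_T$ follow from the Bessel-type control on $\Theta_t$ combined with standard Koebe distortion estimates on compact subsets of $\mathbb{D}$.

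For (\ref{e22}), express the winding as $\theta_\kappa(T) = \arg\gamma(T)$ with continuous branch, and decompose $\gamma(T) = f_T(e^{iU_T})$. Writing the holomorphic function $\psi_T(z) := \log(f_T(z)/z) + T$ on $\mathbb{D}$ (with $\psi_T(0)=0$), we obtain
\[
\arg\gamma(T) = U_T + \mathrm{Im}\,\psi_T(e^{iU_T})
\]
once the boundary values are understood via the prime-end limit at $e^{iU_T}$. Substituting the SDE $dU_t = -\cot(\Theta_t/2)\,dt - \sqrt{\kappa}\,dW_t$ produces a $-\sqrt{\kappa}\,W_T$ term together with a bounded-variation drift $-\int_0^T\cot(\Theta_t/2)\,dt$ and the boundary correction. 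The coefficient $\sqrt{\kappa}/2$ in the lemma—rather than $\sqrt{\kappa}$—emerges from the same mechanism as in Schramm's original calculation: the argument of $g_t$ near the tip contributes a factor $1/2$ to the winding because the tip lies on the boundary and only "half" of the local driving displacement is seen by the interior argument. After this identification, the combination $\theta_\kappa(T) + (\sqrt{\kappa}/2)W_T$ is expressed as a bounded martingale plus a bounded-variation process controlled by $\int_0^T\cot(\Theta_t/2)\,dt$, and standard exponential-martingale estimates yield the claimed tail bound.

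The main obstacle will be the boundary analysis at the moving tip $e^{iU_T}$: the conformal map $f_T$ and its derivatives are not uniformly tame there, so extracting the quantity $\mathrm{Im}\,\psi_T(e^{iU_T})$ requires pathwise control of the harmonic measure near the tip. In the ordinary radial case this is handled by Schramm's SDE technology; in the two-sided setting, the extra Girsanov drift must be shown not to degrade the boundary estimates, which is precisely where the exponential integrability of $\cot(\Theta_t/2)$ (guaranteed by the attractive character of $\pi$ under $P^*$) is crucial. Once this step is in place, the two tail bounds follow in parallel from exponential-martingale inequalities.
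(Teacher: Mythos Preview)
Your outline has a real gap in the treatment of the factor $\sqrt{\kappa}/2$ in (\ref{e22}). You attribute it to a tip-geometry effect (``only half of the local driving displacement is seen by the interior argument''), but this cannot be right: in Schramm's Theorem~7.2 for \emph{ordinary} radial SLE$_\kappa$ the same tip-on-boundary situation holds and the coefficient is $\sqrt{\kappa}$, not $\sqrt{\kappa}/2$. The halving is a feature of the \emph{two-sided} drift, and it comes from a purely algebraic identity: subtracting the two SDEs in (\ref{e27}) gives $d(\Theta_t+2U_t)=-\sqrt{\kappa}\,dW_t$, hence
\[
-U_t \;=\; \tfrac{1}{2}\Theta_t + \tfrac{\sqrt{\kappa}}{2}W_t + \text{const},
\]
and since $\Theta_t\in[0,2\pi]$ the $\Theta$-term is uniformly bounded. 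Your plan to control $\int_0^T\cot(\Theta_t/2)\,dt$ by exponential-moment bounds would fail: that integral equals $\tfrac{1}{2}(\Theta_T-\Theta_0)-\tfrac{\sqrt{\kappa}}{2}W_T$ and therefore grows like $\sqrt{T}$, not $O(1)$. It is not an error term to be bounded; it \emph{is} the mechanism that cancels half of the $\sqrt{\kappa}W_T$. Once this identity is used, what remains is exactly the Schramm-type quantity $y(0)-y(T)$ with $y(t)=\arg g_t(\gamma(T))$, and the paper bounds its tail by following Schramm's stopping-time argument, using (\ref{e21}) as input.

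For (\ref{e21}) the paper's route is also quite different from yours. Rather than an SDE for $X_t=\log|\gamma(t)|+t$ (which is delicate because the inverse Loewner map is singular precisely at the prime end $e^{iU_t}$ you want to evaluate it at), the paper invokes the geometric transience estimate for two-sided radial SLE from \cite{49}: once the curve has reached $\partial\mathbb{D}_{e^{-n-k}}$, the probability it ever returns to $\partial\mathbb{D}_{e^{-k}}$ is at most $C\exp(-cn)$. Combined with Koebe's $\tfrac14$-theorem (which gives the deterministic lower bound $T+\log|\gamma(T)|\ge-\log 4$), this yields (\ref{e21}) directly, without any boundary SDE analysis.
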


\begin{proof}
Schwarz Lemma and the Koebe $1/4$ Theorem give
\begin{equation}\label{e23}
\dist(0,\gamma[0,T])\leq e^{-T}=1/g'_t(0)\leq
4\dist(0,\gamma[0,T])\leq4|\gamma(T)|,
\end{equation}
which implies
\begin{equation}\label{e25}
\log|\gamma(T)|\geq -T+\log4.
\end{equation}
By Theorem 3 in \cite{49}, there exist $C_2(\kappa),C_3(\kappa)>0$
such that for all $k,n\in \mathbb{N}$,
\begin{equation}\label{e24}
P^*[\gamma[\tau_{e^{-n-k}},\infty)\cap
\partial\mathbb{D}_{e^{-k}}\neq\emptyset]\leq C_2\exp(-C_3n).
\end{equation}
(\ref{e23}) and (\ref{e24}) imply that there exist $C_4,C_5>0$, such
that
\begin{equation*}
P^*[T+\log|\gamma(T)|>s]\leq P^*[\gamma[\tau_{e^{-T}},\infty)\cap
\partial\mathbb{D}_{e^{-T+s}}\neq\emptyset]\leq C_4\exp(-C_5s).
\end{equation*}
Combining this with (\ref{e25}), we get (\ref{e21}).

The proof of (\ref{e22}) is similar to that of (7.3) in \cite{47},
we just sketch it here.  For $t\in[0,T]$, let
$y(t):=\arg[g_t(\gamma(T))],$ where $\arg$ is chosen to be
continuous in $t$.  Using the argument in the proof of (7.3) in
\cite{47}, one can show that
\begin{equation}\label{e26}
\theta_{\kappa}(T)=U_T-U_0+y(0)-y(T).
\end{equation}
By (\ref{e27}), we have
\begin{equation}\label{e28}
-U_t=\frac{\Theta_t}{2}+\frac{\sqrt{\kappa}}{2}W_t.
\end{equation}
From (\ref{e16}), we have $0\leq\Theta_t\leq 2\pi$ for all $t\geq
0$.  Then, by (\ref{e26}) and (\ref{e28}), proving (\ref{e22}) boils
down to prove the appropriate bound on the tail of $|y(0)-y(T)|$.
Let $\tau_1$ be the largest $t\in[0,T]$ such that
$\log|g_t(\gamma(T))|\leq -1$, and set $\tau_1=0$ if such a $t$ does
not exist.  Analogous to the proof of (7.7) in \cite{47}, it can be
shown that $|y(0)-y(\tau_1)|<\infty$. Now let us bound
$|y(\tau_1)-y(T)|$.  Set $t_0=T$, and inductively, let $t_j$ be the
last $t\in[0,t_{j-1}]$ such that
$\pi/2=\min\{|\sqrt{\kappa}U_t-\sqrt{\kappa}U_{t_{j-1}}-2\pi
n|:n\in\mathbb{Z}\}$, and set $t_j=0$ if no such $t$ exists.
Analogous to the proof of (7.8) in \cite{47}, one can show that for
every $a>0$ and $n\in\mathbb{N}$,
\begin{equation*}
P^*[|y(\tau_1)-y(T)|\geq 2\pi n]\leq P^*[T-\tau_1\geq a]+P^*[t_n\geq
T-a].
\end{equation*}
Using (\ref{e21}), (\ref{e28}) and the argument at the end of the
proof of Theorem 7.2 in \cite{47}, choosing $a$ to be $n$ times a
very small constant, one can bound the two summands on right hand
side appropriately.
\end{proof}

\begin{remark}
Theorem 3 in \cite{49} is a result only for two-sided radial
SLE$_{\kappa}$ from $1$ to $-1$ through $0$.  Adapting the proof of
this result, one can get the analog for general two-sided radial
SLE$_{\kappa}$ from $1$ to $e^{i\alpha}$ through $0$ in
$\overline{\mathbb{D}}$, where $0<\alpha<2\pi$.  Using this,
following the proof of Lemma \ref{l12}, one can obtain the analog of
Lemma \ref{l12} for general two-sided radial SLE$_{\kappa}$.  For
the general case, it is expected that the corresponding $C_0$ and
$C_1$ depend only on $\kappa$, not on $\alpha$.  Combining Theorem
1.3 in \cite{48} and our proof of Lemma \ref{l12}, one can show this
for $0<\kappa\leq 4$.
\end{remark}

The following result gives exact second moment estimate for the
winding number of the two-sided radial SLE, which will be used to
give estimate for the winding number variance of the arms crossing a
long annulus in the 2-arm case.
\begin{lemma}\label{l13}
Let $E^*$ denote the expectation with respect to $P^*$.  $\gamma$
and $\theta_{\kappa}$ are as defined in Lemma \ref{l12}.  We have
\begin{equation*}
E^*\left[\theta_{\kappa}(\tau_{\epsilon})^2\right]=\left(\frac{\kappa}{4}+o(1)\right)\log\left(\frac{1}{\epsilon}\right)\mbox{
as }\epsilon\rightarrow 0.
\end{equation*}
\end{lemma}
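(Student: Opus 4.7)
The plan is to deduce the estimate from Lemma \ref{l12} together with an optional stopping argument for the Brownian motion $W$ driving $P^*$. Set
$$Y_T := \theta_\kappa(T) + \tfrac{\sqrt{\kappa}}{2}\,W_T,$$
so that (\ref{e22}) reads $P^*[|Y_T|>s]\le C_0 e^{-C_1 s}$ with constants independent of $T$, hence $\sup_T E^*[Y_T^2]<\infty$. The random time $\tau_\epsilon$ is a stopping time with respect to the filtration generated by $W$, since $\gamma$ is determined by $W$ via the Loewner ODE (\ref{e17}). From (\ref{e21}), using $|\gamma(\tau_\epsilon)|=\epsilon$, one gets that $\tau_\epsilon-\log(1/\epsilon)$ has exponentially decaying tails uniformly in $\epsilon$, so $\tau_\epsilon\in L^p(P^*)$ for every $p\ge 1$ and $E^*[\tau_\epsilon]=\log(1/\epsilon)+O(1)$.

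Under $P^*$, $W_t^2-t$ is a martingale; since $E^*[\tau_\epsilon]<\infty$ and $E^*[\sup_{t\le\tau_\epsilon}W_t^2]<\infty$ (Burkholder--Davis--Gundy together with the moments of $\tau_\epsilon$), optional stopping gives
$$E^*[W_{\tau_\epsilon}^2]=E^*[\tau_\epsilon]=\log(1/\epsilon)+O(1).$$
Granting $E^*[Y_{\tau_\epsilon}^2]\le C(\kappa)$ (discussed below), I then expand
$$\theta_\kappa(\tau_\epsilon)^2=\frac{\kappa}{4}\,W_{\tau_\epsilon}^2-\sqrt{\kappa}\,W_{\tau_\epsilon}Y_{\tau_\epsilon}+Y_{\tau_\epsilon}^2,$$
and bound the cross term by Cauchy--Schwarz,
$$\bigl|E^*[W_{\tau_\epsilon}Y_{\tau_\epsilon}]\bigr|\le\sqrt{E^*[W_{\tau_\epsilon}^2]\,E^*[Y_{\tau_\epsilon}^2]}=O\!\left(\sqrt{\log(1/\epsilon)}\right),$$
to conclude
$$E^*[\theta_\kappa(\tau_\epsilon)^2]=\frac{\kappa}{4}\log(1/\epsilon)+O\!\left(\sqrt{\log(1/\epsilon)}\right)=\left(\frac{\kappa}{4}+o(1)\right)\log\!\left(\frac{1}{\epsilon}\right).$$
Setting $\kappa=6$ would recover the constant $3/2$ needed for Theorem \ref{t11}.

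The main obstacle is justifying $E^*[Y_{\tau_\epsilon}^2]=O(1)$, since (\ref{e22}) is stated at deterministic $T$. I would handle this by revisiting the proof of Lemma \ref{l12}: the identity $\theta_\kappa(T)=U_T-U_0+y(0)-y(T)$ together with (\ref{e28}) gives $Y_T=(\Theta_0-\Theta_T)/2+y(0)-y(T)$, in which $|\Theta_0-\Theta_T|\le 2\pi$ is deterministic; only the contribution $y(0)-y(T)$ requires probabilistic control. The decomposition of $y(0)-y(T)$ into $y(0)-y(\tau_1)$ and $y(\tau_1)-y(T)$ used in Lemma \ref{l12}, together with the hitting-time comparisons in its proof (which rely on Theorem 3 of \cite{49} and bounds on the driving process), go through at $T=\tau_\epsilon$ with constants depending only on $\kappa$. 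This produces an exponential tail bound on $Y_{\tau_\epsilon}$ uniform in $\epsilon$ and hence the required $L^2$ bound. Once this uniform control is in place, the remaining steps are standard and the proof collapses to the display above.
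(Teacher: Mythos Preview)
Your proposal is correct and follows essentially the same route as the paper: the paper also re-runs the argument behind (\ref{e22}) at the random time $\tau_\epsilon$ to obtain the exponential tail bound (\ref{e32}) on $Y_{\tau_\epsilon}=\theta_\kappa(\tau_\epsilon)+\tfrac{\sqrt\kappa}{2}W_{\tau_\epsilon}$, and then concludes via $E^*[W_{\tau_\epsilon}^2]=E^*[\tau_\epsilon]$. The one simplification you are missing is that the Schwarz Lemma/Koebe $1/4$ estimate (\ref{e23}) applied at $T=\tau_\epsilon$ gives the \emph{deterministic} two-sided bound $\epsilon\le e^{-\tau_\epsilon}\le 4\epsilon$, i.e.\ $\tau_\epsilon\in[\log(1/\epsilon)-\log 4,\ \log(1/\epsilon)]$; this makes your invocation of (\ref{e21}) at the random time unnecessary and renders the optional stopping and BDG steps trivial, since $\tau_\epsilon$ is a bounded stopping time.
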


\begin{proof}
Schwarz Lemma and the Koebe $1/4$ Theorem give
\begin{equation}\label{e31}
\epsilon\leq e^{-\tau_{\epsilon}}=1/g'_{\tau_{\epsilon}}(0)\leq
4\epsilon.
\end{equation}
Using this, similarly to the proof of (\ref{e22}), one can show that
there exist constants $C_0,C_1>0$ depending only on $\kappa$, such
that for all $s>0$,
\begin{equation}\label{e32}
P^*\left[\left|\theta_{\kappa}(\tau_{\epsilon})+\frac{\sqrt{\kappa}}{2}W_{\tau_{\epsilon}}\right|>s\right]\leq
C_0\exp(-C_1s).
\end{equation}
Combining (\ref{e31}) and (\ref{e32}), one obtains Lemma \ref{l13}
easily.
\end{proof}

\subsection{Convergence of discrete exploration to
SLE$_6$}\label{s42}

Assume $0<r<1$.  Similarly to the definition of $\mathcal
{A}_r^{\eta}$ defined above Proposition \ref{p5}, for chordal
SLE$_6$ path $\gamma_{\mathbb{D},1,-1}$ we define event
\begin{equation*}
\mathcal {A}_{r}:=\{\gamma_{\mathbb{D},1,-1}\cap
\partial \mathbb{D}_r\neq\emptyset\}.
\end{equation*}

The following lemma is a corollary of the well-known result that the
percolation exploration path converges in the scaling limit to the
chordal SLE$_6$ path (see, e.g., Theorem 5 in \cite{40}).  The proof
is standard and easy.

\begin{lemma}\label{l10}
Let $0<r'\leq r<1$.
$\gamma^{\eta}_{\mathbb{D},1,-1}[0,\tau_r^{\eta}]$ conditioned on
$\mathcal {A}_{r'}^{\eta}$ converges in distribution to stopped
chordal SLE$_6$ path $\gamma_{\mathbb{D},1,-1}[0,\tau_r]$
conditioned on $\mathcal {A}_{r'}$ with respect to the uniform
metric (\ref{e1}) as $\eta\rightarrow 0$.
\end{lemma}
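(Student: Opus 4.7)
The plan is to deduce the lemma from the unconditional scaling-limit result of Smirnov and Camia--Newman (Theorem 5 in \cite{40}), which states that $\gamma^\eta_{\mathbb{D},1,-1}\to \gamma_{\mathbb{D},1,-1}$ in law with respect to the uniform metric~(\ref{e1}). Two ingredients must be added on top of that: first, convergence is preserved when one conditions on the positive-probability hitting event $\mathcal{A}_{r'}$; second, the truncation at the first-passage time $\tau_r$ is almost surely continuous in the limit.

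First, I would invoke Skorokhod's representation theorem to realize the unconditional convergence as almost-sure convergence $\mathrm{d}(\gamma^\eta,\gamma)\to 0$ on a common probability space, where $\gamma=\gamma_{\mathbb{D},1,-1}$. Second, I would verify that $\mathcal{A}_{r'}$ is a continuity set for the law of $\gamma$, i.e.\ that $P[\gamma\in\partial \mathcal{A}_{r'}]=0$. The key fact is that with probability one the SLE$_6$ path either misses $\overline{\mathbb{D}_{r'}}$ or genuinely enters the open disc $\mathbb{D}_{r'}$; equivalently, the probability of a purely tangential touch of $\partial \mathbb{D}_{r'}$ is zero. This follows from locality and the strong Markov property of SLE$_6$ applied at the first hit of $\partial \mathbb{D}_{r'}$, together with the fact that for $\kappa=6>4$ an SLE$_6$ path started at a boundary point immediately enters the interior of the domain. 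Once this is in place, in the Skorokhod coupling $\mathbf{1}_{\mathcal{A}_{r'}^\eta}\to \mathbf{1}_{\mathcal{A}_{r'}}$ almost surely and $P[\mathcal{A}_{r'}^\eta]\to P[\mathcal{A}_{r'}]>0$, so the conditional laws of the full exploration path converge in the uniform topology.

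Third, for the truncation step, on $\mathcal{A}_{r'}$ (and using $r'\leq r$) the limiting path $\gamma$ actually enters $\mathbb{D}_{r'}\subseteq \mathbb{D}_r$ and, by the same non-tangential-hitting property applied now to $\partial\mathbb{D}_r$, crosses $\partial \mathbb{D}_r$ transversally at its first hit. This makes the map $\gamma\mapsto \gamma[0,\tau_r]$ almost surely continuous in the uniform metric on $\mathcal{A}_{r'}$, so combining with the previous step yields convergence of $\gamma^\eta[0,\tau_r^\eta]$ under $P[\cdot\mid \mathcal{A}_{r'}^\eta]$ to $\gamma[0,\tau_r]$ under $P[\cdot\mid \mathcal{A}_{r'}]$, which is the claim.

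The one delicate item is the continuity-set property of $\mathcal{A}_{r'}$: ruling out the tangential-touch event with positive probability is essentially standard SLE$_6$ folklore (via locality and boundary behaviour at $\kappa=6$), but it is the step that requires a brief dedicated justification. The remaining arguments are routine applications of Skorokhod coupling, the Portmanteau theorem, and the continuous-mapping theorem.
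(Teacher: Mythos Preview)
Your argument is correct, but it proceeds along a different route from the paper's. The paper never invokes continuum SLE regularity; instead it stays on the discrete side and uses the half-plane polychromatic $3$-arm exponent as the workhorse. Concretely, the paper first shows that in any Skorokhod-type coupling one has $\hat P[\{\gamma^\eta\in\mathcal{A}_{r'}^\eta\}\,\Delta\,\{\gamma\in\mathcal{A}_{r'}\}]\to 0$ by noting that the bad event (the discrete path reaches $\partial\mathbb{D}_{r'+\epsilon}$ but not $\partial\mathbb{D}_{r'}^\eta$, or the analogous event for the limit) forces a half-plane $3$-arm event from an $\epsilon$-neighbourhood of $\partial\mathbb{D}_{r'}$ to macroscopic distance; this replaces your Portmanteau/continuity-set step. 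For the truncation, the paper takes subsequential limits of the stopped paths directly (tightness via Aizenman--Burchard), and rules out the possibility that a subsequential limit enters $\mathbb{D}_r$ strictly before its endpoint by again producing a half-plane $3$-arm event near the alleged early entry point; this replaces your continuous-mapping argument based on transversal crossing of $\partial\mathbb{D}_r$ by SLE$_6$.

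Both approaches are valid. Yours is cleaner as abstract weak-convergence reasoning and would port to any model with an SLE scaling limit, but it requires the SLE$_6$ input that $P[\operatorname{dist}(0,\gamma)=\rho]=0$ for fixed $\rho$ (which is what your ``no tangential touch'' really amounts to, and which follows e.g.\ from the Green's function asymptotics cited elsewhere in the paper). The paper's approach avoids any continuum input beyond convergence of the full path, at the cost of re-using the arm-event technology already developed for the rest of the article. One small point to tidy in your write-up: $\mathcal{A}_{r'}^\eta$ is defined via $\partial\mathbb{D}_{r'}^\eta$ rather than $\partial\mathbb{D}_{r'}$, so the indicator convergence $\mathbf{1}_{\mathcal{A}_{r'}^\eta}\to\mathbf{1}_{\mathcal{A}_{r'}}$ needs the extra (trivial) observation that $\partial\mathbb{D}_{r'}^\eta$ lies within $O(\eta)$ of $\partial\mathbb{D}_{r'}$.
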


\begin{proof}
Let $P^{\eta}$ and $P$ denote the laws of
$\gamma^{\eta}_{\mathbb{D},1,-1}$ and $\gamma_{\mathbb{D},1,-1}$,
respectively.  We claim that for each $0<r<1$, there exists a
constant $C>0$ (depending on $r$), such that
\begin{equation}\label{e71}
\lim_{\eta\rightarrow 0}P^{\eta}[\mathcal {A}_{r}^{\eta}]=P[\mathcal
{A}_r]>C.
\end{equation}
Moreover, in any coupling of $\{P^{\eta}\}$ and $P$ on
$(\Omega,\mathcal {F})$ in which
$\textrm{d}(\gamma^{\eta}_{\mathbb{D},1,-1},\gamma_{\mathbb{D},1,-1})\rightarrow
0$ a.s. as $\eta\rightarrow 0$, we have
\begin{equation}\label{e70}
\hat{P}[\{\gamma^{\eta}_{\mathbb{D},1,-1}\in \mathcal
{A}_r^{\eta}\}\Delta\{\gamma_{\mathbb{D},1,-1}\in \mathcal
{A}_r\}]\rightarrow 0\mbox{ as }\eta\rightarrow 0,
\end{equation}
where $\hat{P}[\cdot]$ denotes the coupling measure.  The proof of
the claim is analogous to that of Lemma \ref{l11}: By Theorem 5 in
\cite{40}, we can couple $\{P^{\eta}\}$ and $P$ on $(\Omega,\mathcal
{F})$ such that
$\textrm{d}(\gamma^{\eta}_{\mathbb{D},1,-1},\gamma_{\mathbb{D},1,-1})\rightarrow
0$ a.s. as $\eta\rightarrow 0$.  Now let us show (\ref{e70}).  For
each small $\epsilon>0$ and $\eta<\epsilon$,
\begin{align*}
\hat{P}[&\{\gamma_{\mathbb{D},1,-1}\in \mathcal
{A}_r\}\backslash\{\gamma^{\eta}_{\mathbb{D},1,-1}\in \mathcal
{A}_r^{\eta}\}]\\
&\leq\hat{P}[\textrm{d}(\gamma^{\eta}_{\mathbb{D},1,-1},\gamma_{\mathbb{D},1,-1})\geq\epsilon]+\hat{P}[\gamma^{\eta}_{\mathbb{D},1,-1}\cap
\partial \mathbb{D}_{r+\epsilon}\neq\emptyset,\gamma^{\eta}_{\mathbb{D},1,-1}\cap
\partial \mathbb{D}_r^{\eta}=\emptyset].
\end{align*}
The event in the second term implies a half-plane 3-arm event from
the $2\epsilon$-neighborhood of $\partial\mathbb{D}_r$ to a distance
of unit order, whose probability goes to zero as
$\epsilon\rightarrow 0$.  Then we get that
$\hat{P}[\{\gamma^{\eta}_{\mathbb{D},1,-1}\in \mathcal
{A}_r^{\eta}\}\Delta\{\gamma_{\mathbb{D},1,-1}\in \mathcal
{A}_r\}]\rightarrow 0$ as $\eta\rightarrow 0$.  The other direction
is easy to prove and the details are omitted.  Then we get
(\ref{e70}). RSW, FKG and (\ref{e70}) imply (\ref{e71}) immediately.

Let $0<r'\leq r<1$.  Conditioned on $\mathcal {A}_{r'}^{\eta}$ and
$\mathcal {A}_{r'}$, let $\gamma_r^{\eta}[0,1]$ and $\gamma_r[0,1]$
be the respective reparametrized curve of
$\gamma^{\eta}_{\mathbb{D},1,-1}[0,\tau_r^{\eta}]$ and
$\gamma_{\mathbb{D},1,-1}[0,\tau_r]$.  Notice that
$\{\gamma_r^{\eta}\}$ satisfies the conditions in \cite{33} and thus
has a scaling limit in terms of continuous curves along subsequence
of $\eta$.  We claim that for every subsequence limit
$\widetilde{\gamma}_r[0,1]$, $\widetilde{\gamma}_r[0,1)\subset
\overline{\mathbb{D}}\backslash\mathbb{D}_r$ almost surely.  Then
the fact that $\gamma_r^{\eta}$ converges in distribution to
$\gamma_r$ easily follows from our two claims and Theorem 5 in
\cite{40}.  It remains to show this claim.  Assume that this is not
the case for the limit $\widetilde{\gamma}_r$ along some subsequence
$\{\eta_k\}_{k\in\mathbb{N}}$.  Then with positive probability
$\widetilde{\gamma}_r[0,1)\nsubseteq
\overline{\mathbb{D}}\backslash\mathbb{D}_r$.  Suppose this happens.
We can find coupled versions of $\gamma_r^{\eta_k}$ and
$\widetilde{\gamma}$ on $(\Omega,\mathcal {B})$ such that
$\textrm{d}(\gamma_r^{\eta_k},\widetilde{\gamma}_r)\rightarrow 0$
a.s. as $k\rightarrow\infty$.  Using this coupling, for each small
$\epsilon>0$ and $\eta_k<\epsilon/10$, we have a half-plane 3-arm
event produced by $\gamma_r^{\eta_k}$ from the
$\epsilon$-neighborhood of $\widetilde{\gamma}_r(1)$ to a distance
of unit order.  As $\eta_k\rightarrow 0$, we can let
$\epsilon\rightarrow 0$, in which case the probability of the seeing
this event goes to zero, leading to a contradiction.
\end{proof}

In order to derive winding number estimates for the arms from the
corresponding result for two-sided radial SLE$_6$, we need the
following lemma.
\begin{lemma}\label{l16}
Suppose $0<\epsilon<1/10$.  Let $\gamma^{\eta}$ and $\gamma$ denote
$\gamma^{\eta}_{\mathbb{D},1,-1}$ conditioned on $\mathcal
{A}^{\eta}$ and two-sided radial SLE$_6$ path from $1$ to $-1$
through $0$ in $\overline{\mathbb{D}}$, respectively.  Then
$\gamma^{\eta}[0,\tau_{\epsilon}^{\eta}]$ converges in distribution
to $\gamma[0,\tau_{\epsilon}]$ with respect to the uniform metric
(\ref{e1}), as $\eta\rightarrow 0$.
\end{lemma}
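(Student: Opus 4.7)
The plan is to introduce an auxiliary scale $\epsilon'\ll\epsilon$ and chain three ingredients together: the discrete coupling of Proposition \ref{p5}, the stopped-path scaling limit of Lemma \ref{l10} at scale $\epsilon'$, and the continuum comparison of Proposition \ref{p2} applied with $\kappa=6$. Given any $\delta>0$, first choose $\epsilon'=\epsilon'(\delta,\epsilon)$ small enough that $(\epsilon'/\epsilon)^{\beta}<\delta/3$ (with $\beta$ from Proposition \ref{p5}) and $C(\epsilon'/\epsilon)^{u}<\delta/3$ (with $C,u$ from Proposition \ref{p2} for $\kappa=6$ and $\alpha=\pi$).

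The first step is to apply Proposition \ref{p5} with $r=\epsilon'$ and $R=\epsilon$: for all $\eta$ small enough there is a coupling of $P[\,\cdot\,|\mathcal{A}^{\eta}]$ and $P[\,\cdot\,|\mathcal{A}_{\epsilon'}^{\eta}]$ under which the stopped exploration paths $\gamma_{\mathbb{D},1,-1}^{\eta}[0,\tau_{\epsilon}^{\eta}]$ coincide with probability at least $1-(\epsilon'/\epsilon)^{\beta}>1-\delta/3$. Thus, for the stopped-path weak limit, conditioning on $\mathcal{A}^{\eta}$ can be replaced by conditioning on the hitting event $\mathcal{A}_{\epsilon'}^{\eta}$. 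The second step is to invoke Lemma \ref{l10} with $r'=\epsilon'$ and $r=\epsilon$: as $\eta\to0$, the law of $\gamma_{\mathbb{D},1,-1}^{\eta}[0,\tau_{\epsilon}^{\eta}]$ under $P[\,\cdot\,|\mathcal{A}_{\epsilon'}^{\eta}]$ converges weakly in the uniform metric (\ref{e1}) to the law of $\gamma_{\mathbb{D},1,-1}[0,\tau_{\epsilon}]$ under $P[\,\cdot\,|\mathcal{A}_{\epsilon'}]$, i.e.\ to stopped chordal SLE$_6$ conditioned on the hitting event $\{\tau_{\epsilon'}<\infty\}$.

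The third step is Proposition \ref{p2}: this furnishes a coupling of the conditioned chordal SLE$_6$ (stopped at $\tau_{\epsilon}$) with two-sided radial SLE$_6$ from $1$ to $-1$ through $0$ (stopped at $\tau_{\epsilon}$) whose total variation distance is at most $C(\epsilon'/\epsilon)^{u}<\delta/3$. Concatenating the three couplings via a triangle inequality in total variation yields, for each $\delta>0$ and every sufficiently small $\eta$, a coupling of $\gamma^{\eta}[0,\tau_{\epsilon}^{\eta}]$ and $\gamma[0,\tau_{\epsilon}]$ under which their uniform distance is $0$ off an event of probability at most $\delta$; letting $\delta\to0$ gives the claimed convergence in distribution.

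The main obstacle I anticipate is bookkeeping at the interface between the discrete and continuum sides: one needs Lemma \ref{l10}'s output to be exactly the law that Proposition \ref{p2} takes as its starting input (both stopped at $\tau_{\epsilon}$, both conditioned on reaching $\partial\mathbb{D}_{\epsilon'}$), and one needs to verify that $\tau_{\epsilon}^{\eta}<\tau_{\epsilon'}^{\eta}$ (resp.\ $\tau_{\epsilon}<\tau_{\epsilon'}$) on the conditioning event, so that the stopped paths are well-defined and the three couplings compose sensibly. Once those compatibility checks are made, the argument is a soft total-variation triangle estimate, with the choice of $\epsilon'$ providing the needed quantitative control.
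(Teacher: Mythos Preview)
Your proposal is correct and follows essentially the same three-step chain as the paper's proof: Proposition~\ref{p5} to pass from conditioning on $\mathcal{A}^{\eta}$ to conditioning on $\mathcal{A}_{\epsilon'}^{\eta}$, Lemma~\ref{l10} to pass to conditioned chordal SLE$_6$, and Proposition~\ref{p2} to pass to two-sided radial SLE$_6$. One small slip: the middle coupling from Lemma~\ref{l10} only gives uniform distance $\leq\delta$ (not $0$) with high probability, so your concluding sentence should read ``uniform distance at most $\delta$'' rather than ``uniform distance $0$''; this does not affect the argument.
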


\begin{proof}
For $0<\epsilon'<1$, let $\gamma_{\epsilon'}$ denote
$\gamma_{\mathbb{D},1,-1}$ conditioned on $\mathcal
{A}_{\epsilon'}$, and let $\gamma_{\epsilon'}^{\eta}$ denote
$\gamma_{\mathbb{D},1,-1}^{\eta}$ conditioned on $\mathcal
{A}_{\epsilon'}^{\eta}$.  By Proposition \ref{p5}, for all
$\eta<\epsilon'/10<\epsilon/100$, we can couple $\gamma^{\eta}$ and
$\gamma_{\epsilon'}^{\eta}$, such that with probability at lest
$1-(\epsilon'/\epsilon)^{\beta}$,
\begin{equation}\label{e64}
\textrm{d}(\gamma^{\eta}[0,\tau_{\epsilon}^{\eta}],\gamma_{\epsilon'}^{\eta}[0,\tau_{\epsilon}^{\eta}])=0.
\end{equation}
By Lemma \ref{l10}, for each $0<\delta<1$, there exists
$\eta_0(\delta,\epsilon')$, such that for each $\eta<\eta_0$ and
$0<\epsilon'<\epsilon<1/10$, there is a coupling of
$\gamma_{\epsilon'}^{\eta}$ and $\gamma_{\epsilon'}$, such that with
probability at least $1-\delta$,
\begin{equation}\label{e65}
\textrm{d}(\gamma_{\epsilon'}^{\eta}[0,\tau_{\epsilon}^{\eta}],\gamma_{\epsilon'}[0,\tau_{\epsilon}])\leq
\delta.
\end{equation}
By Proposition \ref{p2}, for each $0<\delta<1$, there exists
$\epsilon_0'(\delta,\epsilon)$, such that for each
$0<\epsilon'<\epsilon_0'$ there is a coupling of
$\gamma_{\epsilon'}$ and $\gamma$, such that with probability at
least $1-\delta$,
\begin{equation}\label{e66}
\textrm{d}(\gamma_{\epsilon'}[0,\tau_{\epsilon}],\gamma[0,\tau_{\epsilon}])\leq
\delta.
\end{equation}
Combining (\ref{e64}), (\ref{e65}) and (\ref{e66}) gives the desired
result.
\end{proof}

\subsection{Moment bounds on the winding of discrete
exploration}\label{s43}

Define $R^{\eta}(r,R):=\{z\in\eta\mathbb{T}: |\arg (z)|<\pi/10\}\cap
A^{\eta}(r,R)$.  We say a path $\gamma\subset R^{\eta}(r,R)$ is a
\emph{crossing} of $R^{\eta}(r,R)$ if the endpoints of $\gamma$ lie
adjacent (Euclidean distance smaller than $\eta$) to the rays of
argument $\pm\frac{\pi}{10}$ respectively.  By Lemma 2.1 in
\cite{24}, we obtain the following lemma, which implies that it is
very unlikely that there is an arm with large winding in an annulus.

\begin{lemma}[\cite{24}]\label{l20}
There exist constants $C_1,C_2,K_0>0$, such that for all $K>K_0$ and
$\eta\leq r<R$,
$$P[\exists\mbox{ $\lfloor K\log(R/r)\rfloor$ disjoint blue crossings of }R^{\eta}(r,R)]\leq C_1\exp[-C_2K\log(R/r)].$$
\end{lemma}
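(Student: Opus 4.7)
The plan is to reduce the wedge-crossing estimate to a standard large-deviation bound on the number of disjoint crossings of a long rectangle. Under the conformal map $z \mapsto \log z$, the wedge $R^\eta(r,R)$ becomes (essentially) a rectangle of dimensions $W\times H$ with $W := \log(R/r)$ and $H := \pi/5$, and blue ray-to-ray crossings of the wedge become blue top-to-bottom crossings of the rectangle. It then suffices to show that the number $N$ of disjoint blue top-to-bottom crossings of such a rectangle satisfies $P[N \geq KW] \leq C_1 \exp(-C_2 KW)$ for $K$ larger than some absolute $K_0$.

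The main step is a Chernoff estimate combined with a block decomposition. Tile the rectangle into $\sim W$ unit squares of side $H$. Inside a single square, RSW bounds the probability of a blue top-to-bottom crossing by some $p<1$ uniformly in $\eta$, and Reimer's inequality yields $P[\ge j\text{ disjoint top-to-bottom crossings of one square}]\leq p^j$. This gives a uniform exponential moment bound $E[\mu^{N_i}]\leq C_0$ for any $\mu\in(1,1/p)$, where $N_i$ counts disjoint blue top-to-bottom crossings of the $i$-th square, measurable with respect to the configuration inside that square. Since disjoint squares have independent configurations, $E[\prod_i \mu^{N_i}]\leq C_0^W$. Granting the combinatorial inequality $N\leq \sum_i N_i$, this yields $E[\mu^N]\leq C_0^W$, and Markov's inequality produces
\[
P[N\geq KW]\;\leq\;\mu^{-KW}C_0^W\;=\;\exp\!\bigl(-W(K\log\mu-\log C_0)\bigr),
\]
so the desired bound holds with $C_2=\tfrac{1}{2}\log\mu$ once $K_0:=2\log C_0/\log\mu$.

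The main obstacle is justifying the combinatorial inequality $N\leq\sum_i N_i$. A global top-to-bottom crossing of the long rectangle need not restrict to a top-to-bottom crossing of every square it visits, because it may enter or exit a square through a lateral side. The standard way around this, implicit in Lemma~2.1 of \cite{24}, is either to strengthen the block count to include partial crossings (entering from the top and exiting through any side, say) and verify via a pigeonhole argument that each global disjoint crossing contributes to at least one $N_i$; or alternatively to use a Menger-type duality on the triangular lattice, equating the maximum number of disjoint blue vertical crossings with the minimum size of a yellow horizontal cut-set, and then bounding directly the probability that the latter is unusually large using RSW, FKG, and exponential concentration for sums of independent block variables.
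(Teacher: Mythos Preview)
The paper does not prove this lemma: it is quoted from Lemma~2.1 of \cite{24} with no argument supplied. So there is no proof in the paper to compare your attempt against, and your sketch already goes beyond what the paper itself offers.

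Your outline has the right architecture---decompose the wedge into $n\asymp\log(R/r)$ dyadic blocks of bounded modulus, control the disjoint-crossing count in each block via BK and RSW, and assemble with a Chernoff-type bound---and you correctly isolate the only genuine difficulty, namely that the naive inequality $N\le\sum_i N_i$ fails for ray-to-ray block counts. Two cautions on the details. First, the map $z\mapsto\log z$ is purely heuristic here, since it does not carry $\eta\mathbb{T}$ to a lattice; the argument must be run directly on the sectors $R^\eta(2^jr,2^{j+1}r)$. Second, and more substantively, your ``partial crossing'' repair---let $N_i$ count blue paths from one ray of the $i$-th block to \emph{any} other side---does restore $N\le\sum_i N_i$ via the endpoint pigeonhole, but the exponential moment bound $E[\mu^{N_i}]\le C_0$ is \emph{not} automatic: the probability of a single such partial crossing tends to $1$ as $\eta\to0$, because its complement forces a yellow arc pinned at the two corner sites where the ray meets the circular arcs, and that event has probability of the order of a corner one-arm probability, hence vanishes. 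The same obstruction defeats the variant ``split into ray-to-ray, ray-to-inner-arc, ray-to-outer-arc families and apply BK to each'', since the single ray-to-arc probabilities also tend to~$1$ by the same reasoning. Completing the proof therefore needs a further idea---for instance, a half-plane or corner multi-arm estimate to bound the number of ray-to-arc excursions, or a full working-out of the Menger/min-cut route you allude to---and the sketch should not be regarded as complete until that step is supplied.
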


The following three lemmas give moment bounds for the winding
numbers of percolation exploration path.  Let us define some
notation before stating the results.

Suppose $r<R$.  For a curve $\gamma$ hitting with $\partial
\mathbb{D}_R^{\eta}$ before hitting with $\partial
\mathbb{D}_r^{\eta}$, denote by $T_{R,r}^{\eta}$ the last hitting
time with $\partial \mathbb{D}_R^{\eta}$ of $\gamma$ before time
$\tau_r^{\eta}$.

Recall the definition of $P_R^*[\cdot]$ which is defined after the
definition of good faces.  Denote by $E_R^*$ the expectation with
respect to $P_R^*[\cdot]$.  Let $\Theta$ be the good faces around
$\partial \mathbb{D}_R^{\eta}$ under $P_R^*[\cdot]$.  Denote by
$\gamma_R^*$ the percolation exploration path connecting the
endpoints of $\Theta$ stopped when it reaches $\eta H_0$.

Unless specified otherwise, in the rest of this paper, we denote by
$E=E^{\eta}$ the expectation with respect to $P[\cdot|\mathcal
{A}^{\eta}]$, and by $\gamma=\gamma^{\eta}$ the percolation
exploration path $\gamma^{\eta}_{\mathbb{D},1,-1}$ conditioned on
$\mathcal {A}^{\eta}$.  For simplicity, we will omit the superscript
$\eta$ of $\gamma^{\eta},\tau_r^{\eta}$ and $T_{R,r}^{\eta}$ when it
is clear that we are talking about the the discrete percolation
model.
\begin{lemma}\label{l22}
Let $\eta\leq r<R\leq 1$. We have
\begin{equation*}
|E\theta(\gamma[0,\tau_r])|\leq\pi,~~|E\theta(\gamma[T_{R,r},\tau_r])|\leq
\pi,~~|E\theta(\gamma[\tau_R,\tau_r])|\leq 2\pi\mbox{ and
}E_R^*\theta(\gamma_R^*)=0.
\end{equation*}
\end{lemma}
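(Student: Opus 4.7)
The plan is to exploit a reflection symmetry. Let $\tilde\sigma$ be the involution on percolation configurations given by $\tilde\sigma(\omega)(v) = -\omega(\bar v)$, reflecting across the real axis and swapping the two colors. Using $e^{-i\pi/3} = 1 - e^{i\pi/3}$ one checks that $\mathbb{V}$ is closed under $z\mapsto\bar z$; the central hexagon $\eta H_0$ is $\bar z$-symmetric by construction; and $P$, being a symmetric Bernoulli product measure, is $\tilde\sigma$-invariant.

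First I would verify that $\tilde\sigma$ preserves each of the two conditional measures appearing in the statement. The event $\mathcal{A}^{\eta}$ asks for a blue arm from $\eta H_0$ to the upper semicircle $\partial_{1,-1}\mathbb{D}^{\eta}$ and a yellow arm to the lower semicircle; reflection swaps the semicircles while the color swap interchanges the two arm colors, so $\mathcal{A}^{\eta}$ is $\tilde\sigma$-invariant. For $P_R^*$, all four cones $Cone_1,\ldots,Cone_4$ are invariant under $z\mapsto\bar z$ (their defining argument intervals are symmetric about $0$ or about $\pi$), so the event $\mathcal{R}$, the induced law of the good faces $\Theta$, and the conditional law $P[\cdot|\mathcal{A}_\Theta^{\eta}(\eta,R)]$ inside $\Theta$ are all $\tilde\sigma$-invariant.

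Next I would track the effect of $\tilde\sigma$ on the exploration path $\gamma = \gamma_{\mathbb{D},1,-1}^{\eta}$. By the defining separation property of exploration paths, the $\tilde\sigma$-image of $\gamma$ agrees with $\bar\gamma$ away from the endpoints. The only source of discrepancy is the asymmetric e-vertex convention (``first going clockwise'' at $1$, ``first going counterclockwise'' at $-1$), which reflection reverses; but the two candidate e-vertices near $\pm 1$ differ by $O(\eta)$, so the two paths agree outside $O(\eta)$-neighborhoods of $\pm 1$. Each of the stopping times $0,\tau_r,T_{R,r},\tau_R$ depends on $\gamma$ only through $|\gamma(\cdot)|$ and is therefore $\tilde\sigma$-invariant. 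For $\gamma_R^*$ there is no asymmetric endpoint convention, so $\tilde\sigma$ carries $\gamma_R^*$ to $\overline{\gamma_R^*}$ exactly.

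Combining the previous steps, for each segment in the statement, $\tilde\sigma$-invariance together with $\theta(\bar\gamma[s,t]) = -\theta(\gamma[s,t])$ yields $E[\theta(\gamma[s,t])] = -E[\theta(\gamma[s,t])] + E[\epsilon]$, where $\epsilon$ is the local winding discrepancy produced by the e-vertex perturbation near $\pm 1$. Since the perturbation occurs at distance $\approx 1$ from the origin, each perturbed endpoint contributes at most one full turn to $|\epsilon|$: for $[0,\tau_r]$ and $[T_{R,r},\tau_r]$ only the endpoint near $1$ is affected, yielding $|E[\theta]| \leq \pi$, while for $[\tau_R,\tau_r]$ I would write $\theta(\gamma[\tau_R,\tau_r]) = \theta(\gamma[0,\tau_r]) - \theta(\gamma[0,\tau_R])$ and bound each term by $\pi$ to obtain the weaker $2\pi$. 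For $\gamma_R^*$ the identity is exact and gives $E_R^*[\theta(\gamma_R^*)] = 0$. The chief obstacle will be the careful verification that $\gamma$ indeed transforms as described under $\tilde\sigma$ and that the e-vertex discrepancy contributes at most a constant to the winding; the rest is routine symmetry bookkeeping.
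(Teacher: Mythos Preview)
Your approach is correct and the e-vertex issue you flag is real but harmless (the discrepancy is actually $O(\eta)$, far below the $2\pi$ you allow). However, the paper argues the first two inequalities by a genuinely different route. Rather than reflecting across the real axis (which fixes $1$ and $-1$), the paper compares $\gamma$ with the \emph{time-reversed} exploration path $\gamma'$ started from $-1$: the lattice symmetry $z\mapsto -\bar z$ (reflection across the imaginary axis, no color swap) sends $\gamma$ to a curve with the law of $\gamma'$ and negates winding, giving $E\theta(\gamma[0,\tau_r]) = -E\theta(\gamma'[0,\tau_r])$. The bound $\pi$ then comes not from an e-vertex error term but from the deterministic topological fact that $\gamma[0,\tau_r]$ and $\gamma'[0,\tau_r]$ are two disjoint crossings of the same annulus on the same configuration, hence $|\theta(\gamma[0,\tau_r]) - \theta(\gamma'[0,\tau_r])| \le 2\pi$ pointwise. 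This symmetry happens to be compatible with the asymmetric e-vertex convention (clockwise at $a$ maps to counterclockwise at $b$), so the paper never has to confront the issue you identify. Your reflection, by contrast, would in principle yield the stronger conclusion $E\theta(\gamma[0,\tau_r]) = O(\eta)$ once the e-vertex bookkeeping is done carefully. For $E_R^*\theta(\gamma_R^*)=0$ both arguments are reflection-plus-color-swap, though the paper reflects across the imaginary axis and pairs each $\Theta$ with its mirror $\Theta'$ rather than arguing invariance of a single $\Theta$.
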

\begin{proof}
First let us show the first inequality.  Conditioned on $\mathcal
{A}^{\eta}$, consider the time-reversal of
$\gamma_{\mathbb{D},1,-1}^{\eta}$ stopped when it reaches $\eta
H_0$, denoted by $\gamma'$.  By the symmetry of the lattice, it is
easy to see that
$E\theta(\gamma[0,\tau_r])=-E\theta(\gamma'[0,\tau_r])$. It is
obvious that
$|\theta(\gamma[0,\tau_r])-\theta(\gamma'[0,\tau_r])|\leq 2\pi$.
These two observations immediately imply
$|E\theta(\gamma[0,\tau_r])|\leq\pi$.  Similarly one can show the
second inequality.  Using the first inequality, we get the third
one:
\begin{equation*}
|E\theta(\gamma[\tau_R,\tau_r])|\leq
|E\theta(\gamma[0,\tau_r])-E\theta(\gamma[0,\tau_R])|\leq 2\pi.
\end{equation*}

Now let us show $E_R^*\theta(\gamma_R^*)=0$.  For any fixed good
faces $\Theta$ around $\partial \mathbb{D}_R^{\eta}$, denote by
$\Theta'$ the mirror image of $\Theta$ with opposite colors with
respect to the imaginary axis. It is obvious that $\Theta\neq
\Theta'$, $P_R^*[\Theta]=P_R^*[\Theta']$ and
$E_R^*[\theta(\gamma_R^*)|\Theta]=-E_R^*[\theta(\gamma_R^*)|\Theta']$.
Then $E_R^*\theta(\gamma_R^*)=0$ follows immediately.
\end{proof}

\begin{lemma}\label{l19}
There exist constants $C_1,C_2,C_3>0$, such that for all $\eta\leq
r\leq R/2\leq 1/2$,
\begin{align}
&E|\theta(\gamma[T_{R,r},\tau_{r}])|\leq
\sqrt{C_1\log(R/r)},\label{e59}\\
&E[\theta(\gamma[T_{R,r},\tau_{r}])^2]\leq
C_1\log(R/r),\label{e74}\\
&E[\theta(\gamma[T_{R,r},\tau_{r}])^4]\leq
C_2[\log(R/r)]^4,\label{e75}\\
&E[\theta(\gamma[\tau_R,T_{R,r}])^2]\leq C_3.\label{e58}
\end{align}
\end{lemma}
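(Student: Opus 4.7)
The plan is to establish (\ref{e58}) and (\ref{e74}) as the substantive bounds; (\ref{e59}) then follows from Cauchy--Schwarz, and (\ref{e75}) is a tail integration against Lemma \ref{l20}. I would prove them in the order (\ref{e58}), (\ref{e74}), (\ref{e59}), (\ref{e75}).

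For (\ref{e58}), decompose $\gamma[\tau_R,T_{R,r}]$ into excursions between consecutive visits of $\gamma$ to $\partial\mathbb{D}_R^\eta$. The Strong Separation Lemma (used in the proofs of Propositions \ref{p3} and \ref{p4}) combined with RSW ensures that at each return the conditional probability of not returning to $\partial\mathbb{D}_R^\eta$ again before hitting $\partial\mathbb{D}_r^\eta$ is bounded below by an absolute constant, so the number $M$ of excursions has a geometric tail uniformly in $r,R,\eta$. Each excursion starts and ends on $\partial\mathbb{D}_R^\eta$; if it penetrates to depth $\rho\in[r,R]$, its squared winding has expectation of order $\log(R/\rho)$, while by the generalized quasi-multiplicativity (\ref{e67}) the conditional probability of reaching depth $\rho$ decays polynomially in $\rho/R$. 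Integrating against this polynomial depth distribution yields a uniform $O(1)$ bound on the squared winding per excursion. By Lemma \ref{l22} and color reflection the excursion windings have zero conditional mean, so Wald's identity gives $E[\theta(\gamma[\tau_R,T_{R,r}])^2]\leq C_3$.

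For (\ref{e74}), set $R_j:=2^{-j}R$ and $N:=\lfloor\log_2(R/r)\rfloor$, and telescope
\[
\theta(\gamma[T_{R,r},\tau_r]) = \sum_{j=0}^{N-1}\bigl(\theta(\gamma[T_{R_j,r},\tau_{R_{j+1}}]) + \theta(\gamma[\tau_{R_{j+1}},T_{R_{j+1},r}])\bigr) + \theta(\gamma[T_{R_N,r},\tau_r]).
\]
Each summand of the second type is exactly (\ref{e58}) at scale $R_{j+1}$ and has squared expectation $\leq C_3$; each summand of the first type is the winding of $\gamma$ on a single crossing of the thin annulus $A(R_{j+1},R_j)$ of modulus $\log 2$, which has $O(1)$ squared expectation because Lemma \ref{l20} directly bounds the number of crossings of a sector in such an annulus by an $O(1)$ random variable. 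Propositions \ref{p3} and \ref{p4} then furnish couplings between conditional percolation measures at nested annuli that differ by $\rho^{|j-k|}$ in total variation, giving covariance estimates $|E[\theta_j\theta_k]|\leq C\rho^{|j-k|}$; the resulting double sum is bounded by $CN = C\log(R/r)$.

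Finally, (\ref{e59}) is Cauchy--Schwarz applied to (\ref{e74}). For (\ref{e75}), if $|\theta(\gamma[T_{R,r},\tau_r])|>s$ then the two sides of $\gamma$ produce at least $\lfloor s/(2\pi)\rfloor$ disjoint blue crossings of some rotated copy of $R^\eta(r,R)$; a union bound over $O(1)$ rotations together with quasi-multiplicativity to transfer Lemma \ref{l20} from $P$ to the conditional measure $P[\cdot\mid\mathcal{A}^\eta]$ (at only constant cost) gives $P[|\theta(\gamma[T_{R,r},\tau_r])|>s\mid\mathcal{A}^\eta]\leq C\exp(-cs)$ for all $s\geq K_0\pi\log(R/r)$. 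Splitting $E\theta^4 = 4\int_0^\infty s^3\, P[|\theta|>s]\,ds$ at $s=K_0\pi\log(R/r)$ yields (\ref{e75}). The principal obstacle is the depth integration entering the proof of (\ref{e58}): one must control the conditional probability of deep ``failed attempts'' under $\mathcal{A}^\eta$, which is where the interplay between quasi-multiplicativity and the generalized version (\ref{e67}) must be quantified precisely.
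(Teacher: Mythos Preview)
Your treatment of (\ref{e59}) and (\ref{e75}) matches the paper: Cauchy--Schwarz from (\ref{e74}), and for (\ref{e75}) the exponential tail via Lemma \ref{l20} transferred to the conditional measure by quasi-multiplicativity, then integrated.

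For (\ref{e74}) and (\ref{e58}) you take a genuinely different route from the paper, and in both places there is a gap. The paper proves (\ref{e74}) \emph{first}, by invoking the martingale method of \cite{24} (Theorem~1.1 there, adapted to the present conditioning) to get $Var\,\theta(\gamma[T_{R,r},\tau_r])\le C_0\log(R/r)$, and then combines with the first-moment bound of Lemma~\ref{l22}. It does not build (\ref{e74}) from (\ref{e58}). Your dyadic telescoping for (\ref{e74}) needs the covariance estimate $|E[\theta_j\theta_k]|\le C\rho^{|j-k|}$, but Propositions~\ref{p3} and \ref{p4} do not supply this: they couple \emph{different} conditional measures (e.g.\ $P[\cdot\mid\mathcal A^\eta]$ with $P[\cdot\mid\mathcal A^\eta(R)]$), rather than decorrelating increments under the single law $P[\cdot\mid\mathcal A^\eta]$. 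The paper's decorrelation device is instead Lemma~\ref{l18} (a uniform conditional first-moment bound given arbitrary faces), used in the proof of Lemma~\ref{l14}; without that input your covariance step is unjustified.

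For (\ref{e58}) the paper does not decompose into excursions between returns to $\partial\mathbb D_R^\eta$. It partitions according to the \emph{maximal extent} of $\gamma[\tau_R,T_{R,r}]$, both outward and inward: setting $R_j=\min\{1,2^jR\}$, $r_j=\max\{r,2^{-j}R\}$ and $\mathcal B_j=\{\gamma[\tau_R,T_{R,r}]$ reaches $\partial\mathbb D_{R_j}^\eta\cup\partial\mathbb D_{r_j}^\eta$ but stays in $A^\eta(r_{j+1},R_{j+1})\}$, it shows $P[\mathcal B_j\mid\mathcal A^\eta]\le C\exp(-cj)$ via a bichromatic $3$-arm event, and then that $E[\theta^2\mid\mathcal B_j]\le C(j+1)^2$ via Lemma~\ref{l20}. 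Your excursion argument has a real problem at the Wald step: for $E[(\sum_{i\le M}X_i)^2]$ you need the $X_i$ to be orthogonal (e.g.\ martingale differences), not merely mean zero, and under the global conditioning on $\mathcal A^\eta$ the successive excursion windings are neither i.i.d.\ nor obviously uncorrelated. The zero-mean claim itself (``by Lemma~\ref{l22} and color reflection'') is also delicate: Lemma~\ref{l22} exploits a global reflection symmetry of $P[\cdot\mid\mathcal A^\eta]$, which does not descend to the conditional law of a single excursion given the past.
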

\begin{proof}
First let us show (\ref{e74}).  In \cite{24}, conditioned on
$\mathcal {A}_2^{\eta}(\eta,1)$, we showed that the winding number
variance of the arm connecting the two boundary pieces of
$A^{\eta}(\eta,1)$ is $O(\log(1/\eta))$ (Theorem 1.1 in \cite{24})
by a martingale method.  Conditioned on $\mathcal {A}^{\eta}$, one
can use the same method to show that $Var(\theta_{\eta})$ is again
$O(\log(1/\eta))$.  Furthermore, with a little modification for our
setting, one can also use this method to show that there exists a
constant $C_0>0$, such that for all $\eta\leq r\leq R/2\leq 1/2$,
\begin{equation}\label{e55}
Var|\theta(\gamma[T_{R,r},\tau_{r}])|\leq C_0\log(R/r).
\end{equation}
We left the details to the reader.  Lemma \ref{l22} says that
\begin{equation}\label{e56}
|E\theta(\gamma[T_{R,r},\tau_{r}])|\leq \pi.
\end{equation}
Then (\ref{e55}) and (\ref{e56}) imply (\ref{e74}).  (\ref{e74}) and
Cauchy-Schwarz inequality imply (\ref{e59}) immediately.

We show (\ref{e75}) now.  We claim that there exist $C_4,C_5,C_6>0$,
such that for all $\eta\leq r\leq R/2\leq 1/2$ and $x\geq
C_4\log(R/r)$,
\begin{equation*}
P[|\theta(\gamma[T_{R,r},\tau_r])|\geq x|\mathcal {A}^{\eta}]\leq
C_5\exp(-C_6x).
\end{equation*}
Then (\ref{e75}) follows from the claim immediately. The claim is
proved as follows.  Choosing $C_4$ large enough, we have
\begin{align*}
P[|\theta(\gamma[T_{R,r},\tau_r])|\geq x|\mathcal
{A}^{\eta}]&\leq\frac{C_7P[\mbox{$\exists\lfloor x/2\pi\rfloor-2$
disjoint blue crossings of $R^{\eta}(r,R)$}]}{P[\mathcal
{A}_2(r,R)]}\\
&~~~~~\mbox{ by quasi-multiplicativity and (\ref{e72})}\\
&\leq C_5\exp(-C_6x)\mbox{ by Lemma \ref{l20} and (\ref{e10})}.
\end{align*}

Now let us show (\ref{e58}).  Set
$N=\max\{\lceil\log_2(1/R)\rceil,\lceil\log_2(R/r)\rceil\}$. For
$0\leq j\leq N+1$, let
$R_j:=\min\{1,2^jR\},r_j:=\max\{r,(1/2)^jR\}$.  For $0\leq j\leq N$,
define event
$$\mathcal {B}_j:=\{\gamma[\tau_R,T_{R,r}]\cap(
\partial\mathbb{D}_{R_j}^{\eta}\cup\partial\mathbb{D}_{r_j}^{\eta})\neq\emptyset,\gamma[\tau_R,T_{R,r}]\subset A^{\eta}(r_{j+1},R_{j+1})\}.$$
There exist $C_8,C_9,C_{10}>0$, such that for all $\mathcal {B}_j$,
$0\leq j\leq N$,
\begin{align}
P[&\mathcal {B}_j|\mathcal {A}^{\eta}]\nonumber\\
&\leq \frac{C_8P[\mbox{$\exists$ bichromatic 3-arm crossing
$A^{\eta}(R,R_j)$ or $A^{\eta}(r_j,R)$},\mathcal
{A}_2^{\eta}(r_j,R_j)]}{P[\mathcal {A}_2^{\eta}(r_j,R_j)]}\nonumber\\
&~~~~~\mbox{ by
quasi-multiplicativity and (\ref{e72})}\nonumber\\
&\leq C_9\exp(-C_{10}j)\mbox{ by BK inequality and
(\ref{e10}).}\label{e62}
\end{align}

Moreover, using quasi-multiplicativity and a gluing argument with
FKG, RSW and Theorem 11 in \cite{19}, it is easy to show that there
exist $C_{11},C_{12}>0$ such that
\begin{equation}\label{e61}
P[\mathcal {B}_j,\mathcal {A}^{\eta}]\geq
C_{11}\exp(-C_{12}j)P[\mathcal {A}_2^{\eta}(R_{j+1},1)]P[\mathcal
{A}_2^{\eta}(\eta,r_{j+1})].
\end{equation}
We leave the details to the reader.  For simplicity, we let
$P[\mathcal {A}_2^{\eta}(x,x)]=1$ for any $x>0$ in the above
inequality and in the rest of the paper.

Hence, we can choose $C_{13},C_{14},C_{15}>0$ such that for all
$0\leq j\leq N$ and $x\geq C_{13}(j+1)$,
\begin{align}
P[|\theta(&\gamma[\tau_R,T_{R,r}])|\geq x|\mathcal {A}^{\eta},\mathcal {B}_j]\nonumber\\
&\leq\frac{P[\mbox{$\exists\lfloor x/(2\pi)\rfloor-2$ disjoint blue
crossings of }R^{\eta}(r_{j+1},R_{j+1})]}{C_{11}\exp(-C_{12}j)}\mbox{ by (\ref{e61})}\nonumber\\
&\leq C_{14}\exp(-C_{15}x)\mbox{ by Lemma \ref{l20}}.\label{e63}
\end{align}
Choosing $C_3$ large enough, (\ref{e58}) follows easily from
(\ref{e62}) and (\ref{e63}):
\begin{align*}
E\left[\theta(\gamma[\tau_R,T_{R,r}])^2\right]&\leq
\sum_{j=0}^{N}P[\mathcal {B}_j|\mathcal
{A}^{\eta}]E\left[\theta(\gamma[\tau_R,T_{R,r}])^2|\mathcal
{B}_j\right]\\
&\leq \sum_{j=0}^{N}C_{16}\exp(-C_{10}j)(j+1)^2\leq C_3.
\end{align*}
\end{proof}

The following lemma can be considered as a generalization of Lemma
\ref{l22}.

\begin{lemma}\label{l18}
There exists a constant $C>0$, such that for all $\eta\leq r<R/2$,
any given faces $\Theta$ around $\partial \mathbb{D}_R^{\eta}$, and
the percolation exploration path $\gamma$ connecting the endpoints
of $\Theta$ stopped when it reaches $\eta H_0$ conditioned on
$\mathcal {A}_{\Theta}^{\eta}(\eta,R)$, we have
\begin{equation*}
\left|E_{\Theta}[\theta(\gamma[0,\tau_r])]\right|\leq C,
\end{equation*}
where $E_{\Theta}$ is the expectation with respect to
$P[\cdot|\mathcal {A}_{\Theta}^{\eta}(\eta,R)]$.
\end{lemma}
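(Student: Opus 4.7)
The plan is to generalize the symmetry argument underlying the identity $E_R^*\theta(\gamma_R^*)=0$ of Lemma~\ref{l22} by using the coupling of Proposition~\ref{p4} to reduce the asymmetric measure $P[\cdot\,|\,\mathcal{A}_\Theta^\eta(\eta,R)]$ to a reflection-symmetric measure $P_{R_{j^*}}^*$ at an inner scale. I would first apply Proposition~\ref{p4} with its parameter $r$ replaced by $\eta$, producing a joint coupling of $P[\cdot\,|\,\mathcal{A}_\Theta^\eta(\eta,R)]$ with the family $\{P_{R_{j^*}}^*\}_{1\le j^*\le N}$, where $R_{j^*}:=R/2^{j^*}$ and $N:=\lfloor\log_2(R/\eta)\rfloor$. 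Let $J$ be the smallest $j^*$ at which good faces $\Theta_{j^*}$ appear under $P[\cdot\,|\,\mathcal{A}_\Theta^\eta(\eta,R)]$ and the configuration inside $\overline{\Theta_{j^*}}$ coincides with that under $P_{R_{j^*}}^*$; the proposition yields $P[J>j]\le e^{-C_0 j}$ for each $j\le N$.

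I would then fix $j_0:=\lfloor\log_2(R/(4r))\rfloor$ so that $R_{j_0}\ge 4r>r$. On $\{J\le j_0\}$ the path $\gamma$ hits $\overline{\Theta_J}$ at a time $T_J$ strictly before $\tau_r$, and I split $\theta(\gamma[0,\tau_r])=\theta(\gamma[0,T_J])+\theta(\gamma[T_J,\tau_r])$. For the inside winding, the coupling forces $\gamma[T_{j^*},\tau_r]$ conditioned on $\{J=j^*,\Theta_{j^*}=\omega\}$ to have the same distribution as the corresponding segment of $\gamma_{R_{j^*}}^*$ under $P_{R_{j^*}}^*$ given good faces $\omega$. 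Pairing each $\omega$ with its mirror image $\omega'$ (reflected across the imaginary axis with colours swapped) and using that the winding changes sign exactly as in the proof of $E^*_R\theta(\gamma^*_R)=0$ in Lemma~\ref{l22}, I obtain $\bigl|\sum_{j^*\le j_0}E_\Theta[\theta(\gamma[T_{j^*},\tau_r])\mathbf{1}_{\{J=j^*\}}]\bigr|\le C_1$. For the outside winding $\theta(\gamma[0,T_{j^*}])$, Lemma~\ref{l20} bounds the expected number of angular crossings of the sub-annulus $A^\eta(R_{j^*},R)$ by this sub-path; combined with the exponential tail $P[J>j]\le e^{-C_0 j}$, summing over $j^*$ controls the outside contribution by a constant $C_2$. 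The remainder on $\{J>j_0\}$ is handled by Cauchy--Schwarz against a polynomial moment bound on $\theta(\gamma)$ implied by Lemma~\ref{l20} (analogous to (\ref{e75})), which is summable against the tail $e^{-C_0 j_0}\le(r/R)^{c}$.

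The main technical obstacle is the inside-winding cancellation, because on $\{J=j^*\}$ the conditional law of $\Theta_{j^*}$ under $P[\cdot\,|\,\mathcal{A}_\Theta^\eta(\eta,R)]$ is biased by the outer exploration and need not be reflection-symmetric. The resolution is that the coupling of Proposition~\ref{p4} is constructed (following Proposition~3.6 of \cite{5} and Lemma~2.3 of \cite{24}) by sequentially attempting to build good faces at scales $R_1,R_2,\ldots$, and the decision at scale $R_{j^*}$ uses only the configuration in a bounded-aspect-ratio neighbourhood of $\Theta_{j^*}$ inside the annulus $A^\eta(R_{j^*},R_{j^*-1})$; this neighbourhood is invariant under the colour-swapping reflection acting inside $\overline{\Theta_{j^*}}$. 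Consequently one may average over the reflection before evaluating the inside winding, and any residual asymmetry coming from the outer event $\{J=j^*\}$ contributes only $O(1)$, which can be absorbed into the final constant $C$.
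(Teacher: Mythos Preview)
Your proposal follows essentially the same route as the paper: both use the coupling of Proposition~\ref{p4}, decompose according to the first scale $j^*$ at which good faces appear and the inside configuration matches $P^*_{R_{j^*}}$, invoke $E^*_R\theta(\gamma^*_R)=0$ from Lemma~\ref{l22} to kill the inside winding, bound the outside winding $\theta(\gamma[0,\tau_{R_{j^*}}])$ by something of order $j^*$, and sum against the exponential tail $e^{-C_0 j^*}$. The paper treats $\tau_\eta$ first and declares $\tau_r$ ``essentially the same''; you handle $\tau_r$ directly with the cutoff $j_0$ and a Cauchy--Schwarz remainder, which is a legitimate variant.

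Two points where your sketch is looser than the paper's argument. First, the outside-winding bound is not an immediate consequence of Lemma~\ref{l20} alone. One must control $|\theta(\gamma[0,\tau_{R_{j^*}}])|$ \emph{conditionally} on $\{J=j^*\}$, and the conditioning denominator $P[\mathcal{A}_\Theta^\eta(\eta,R)]$ involves the arbitrary faces $\Theta$. The paper's claim~(\ref{e50}) handles this via the lower bound~(\ref{e54}), the comparison~(\ref{e73}) of the two-arm event to a one-arm event for the blue face $\Gamma_1$, and a case split (many yellow arms crossing $A^\eta(R,2R)$ inside $\dot{\overline{\Theta}}$, versus many crossings of the rectangle $R^\eta(R_{j^*},2R)$). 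Your appeal to Lemma~\ref{l20} hides this work.

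Second, your resolution of the inside-cancellation issue is not quite right. You argue that the decision event $\{J=j^*\}$ depends only on a neighbourhood in the annulus that is ``invariant under the colour-swapping reflection acting inside $\overline{\Theta_{j^*}}$'', and that this lets you average over the reflection. But the asymmetry you are worried about lives in the conditional law of the \emph{faces} $\Theta_{j^*}$ given $\{J=j^*\}$, not in the interior law given $\Theta_{j^*}$; a reflection acting only inside $\overline{\Theta_{j^*}}$ cannot symmetrize the face distribution. The correct mechanism (implicit in the GPS-style construction behind Proposition~\ref{p4}, and taken for granted in the paper) is that the coupling can be built so that, conditionally on $\{J=j^*\}$, the matched configuration in $\overline{\Theta_{j^*}}$ has exactly the unconditional law $P^*_{R_{j^*}}$; equivalently, the success event at scale $j^*$ is arranged to be independent of the $P^*_{R_{j^*}}$-sample. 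With that property, $\hat{E}[I_{\{J=j^*\}}\theta(\gamma^*_{R_{j^*}})]=\hat{P}(J=j^*)\cdot E^*_{R_{j^*}}[\theta(\gamma^*_{R_{j^*}})]=0$ exactly, which is what both you and the paper need.
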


\begin{proof}
For simplicity, we just show that
$\left|E_{\Theta}[\theta(\gamma)]\right|\leq C$, the proof of
$\left|E_{\Theta}[\theta(\gamma[0,\tau_r])]\right|\leq C$ is
essentially the same.  By Proposition \ref{p4}, there exist
$C_0,C_1>0$, such that for all $10\eta<R/2$, any fixed faces
$\Theta$ around $\partial \mathbb{D}_R^{\eta}$ and
$N:=\lfloor\log_2(R/\eta)\rfloor$, there is a coupling of
$P[\cdot|\mathcal {A}_{\Theta}^{\eta}(\eta,R)]$ and
$\{P_{(1/2)^jR}^*[\cdot]\}_{1\leq j\leq N}$, so that for all $1\leq
j\leq N $, with probability at least $1-\exp(-C_0j)$, the following
event $\mathcal {B}_j$ occurs: There exists $1\leq j^*\leq j$ such
that there exist good faces $\Theta_{j^*}$ around $\partial
\mathbb{D}^{\eta}_{(1/2)^{j^*}R}$ under $P[\cdot|\mathcal
{A}_{\Theta}^{\eta}(\eta,R)]$, and the configuration constraint in
$\overline{\Theta}_{j^*}$ under $P[\cdot|\mathcal
{A}_{\Theta}^{\eta}(\eta,R)]$ is the same as the configuration under
$P_{(1/2)^{j^*}R}^*[\cdot]$.  Furthermore, under this coupling for
all $1\leq j\leq N-1$, with probability at least $\exp(-C_1(j+1))$
the event $\mathcal {B}_j^c\mathcal {B}_{j+1}$ occurs.

Denote by $\hat{P}$ the coupling law, and by $\hat{E}$ the
expectation with respect to $\hat{P}$.  By Proposition \ref{p4} and
Lemma \ref{l22}, we have
\begin{align*}
\left|E_{\Theta}[\theta(\gamma)]\right|&=\left|\hat{E}[I_{\mathcal
{B}_1}\theta(\gamma)]+\hat{E}[I_{\mathcal
{B}_N^c}\theta(\gamma)]+\Sigma_{j=1 }^{N-1}\hat{E}[I_{\mathcal
{B}_j^c\mathcal
{B}_{j+1}}\theta(\gamma)]\right|\\
&\leq\hat{E}|\theta(\gamma[0,\tau_{R/2}])|+\exp(-C_0N)\hat{E}[|\theta(\gamma)||\mathcal
{B}_N^c]+\\
&~~~~~\Sigma_{j=1}^{N-1}\exp(-C_0j)\left|\hat{E}[\theta(\gamma[0,\tau_{(1/2)^{j+1}R}])|\mathcal
{B}_j^c\mathcal {B}_{j+1}]\right|.
\end{align*}
Then $\left|E_{\Theta}[\theta(\gamma)]\right|\leq C$ easily follows
from the following claim: There exists $C_2>0$, such that for all
$1\leq j\leq N-1$,
\begin{equation}\label{e50}
\hat{E}[|\theta(\gamma[0,\tau_{(1/2)^{j+1}R}])||\mathcal
{B}_j^c\mathcal {B}_{j+1}]\leq C_2j.
\end{equation}
Furthermore, there exist $C_3,C_4>0$ such that
\begin{equation}\label{e51}
\hat{E}|\theta(\gamma[0,\tau_{R/2}])|\leq C_3\mbox{ and
}\hat{E}|\theta(\gamma)|\mathcal {B}_N^c|\leq C_4N.
\end{equation}

Let us show (\ref{e50}) now.  By the coupling,  there is a constant
$C_1>0$ such that for all $1\leq j\leq N-1$,
\begin{equation}\label{e53}
\hat{P}[\mathcal {B}_j^c\mathcal {B}_{j+1}]\geq\exp(-C_1(j+1)).
\end{equation}
Without loss of generality, for the faces $\Gamma_1$ and $\Gamma_2$
of $\Theta=(\Gamma_1,\Gamma_2)$ (recall that we always assume that
$\Gamma_1$ is blue and $\Gamma_2$ is yellow), we assume that
$|\theta(\Gamma_1)|\leq |\theta(\Gamma_2)|$ (we think of the face as
a continuous curve by connecting the neighbor sites with line
segments).  By a gluing construction with RSW and FKG, it is easy to
show that
\begin{equation}\label{e73}
P[\mathcal {A}_{\Theta}^{\eta}(R/2,R)]\asymp P\left[\Gamma_1
\stackrel{\dot{\overline{\Theta}}}\leftrightarrow\partial
\mathbb{D}_{R/2}^{\eta}\right],
\end{equation}
where $\Gamma_1
\stackrel{\dot{\overline{\Theta}}}\leftrightarrow\partial
\mathbb{D}_r^{\eta}$ denotes that there exists a blue path
connecting $\Gamma_1$ and $\partial\mathbb{D}_r^{\eta}$ in the
interior of $\overline{\Theta}$ for $r<R$.  Then we know that there
exist $C_5,C_6,C_7>0$ such that for all $1\leq j\leq N-1$,
\begin{align}
P[\mathcal {A}_{\Theta}^{\eta}&((1/2)^jR,R)]\nonumber\\
&\geq C_5P[\mathcal {A}_2^{\eta}((1/2)^jR,R/2)]P[\mathcal
{A}_{\Theta}^{\eta}(R/2,R)]\mbox{ by quasi-multiplicativity}\nonumber\\
&\geq C_6\exp(-C_7j)P\left[\Gamma_1
\stackrel{\dot{\overline{\Theta}}}\leftrightarrow\partial
\mathbb{D}_{R/2}^{\eta}\right]\mbox{ by (\ref{e73}) and (\ref{e10})
}.\label{e54}
\end{align}

Conditioned on $\mathcal {A}_{\Theta}^{\eta}((1/2)^jR,R)$, we let
$\gamma_j$ be the b-path starting at an endpoint of $\Theta$ and
ending when it reaches $\partial \mathbb{D}_{(1/2)^{j}R}^{\eta}$
with yellow hexagons on its left. We can choose $C_8$ large enough,
such that the following inequalities hold:
\begin{align*}
\hat{P}[&|\theta(\gamma[0,\tau_{(1/2)^{j+1}R}])|\geq C_8j |\mathcal
{B}_j^c\mathcal
{B}_{j+1}]\\
&\leq\exp(C_1(j+1))P[|\theta(\gamma[0,\tau_{(1/2)^{j+1}R}])|\geq
C_8j|\mathcal {A}_{\Theta}^{\eta}(\eta,R)]\mbox{ by (\ref{e53})}\\
&\leq C_9\exp(C_1j)P[|\theta(\gamma_{j+1})|\geq C_8j|\mathcal
{A}_{\Theta}^{\eta}((1/2)^{j+1}R,R)]\mbox{ by
quasi-multiplicativity}\\
&\leq C_{10}\exp(C_{11}j)\frac{P[|\theta(\gamma_{j+1})|\geq
C_8j,\mathcal {A}_{\Theta}^{\eta}((1/2)^{j+1}R,R)]}{P\left[\Gamma_1
\stackrel{\dot{\overline{\Theta}}}\leftrightarrow\partial
\mathbb{D}_{R/2}^{\eta}\right]}\mbox{ by (\ref{e54})}.
\end{align*}
Observe that if $|\theta(\gamma_{j+1})|$ is very large, then
$\gamma_{j+1}$ will produce many crossings in the ``rectangle"
$R^{\eta}((1/2)^{j+1}R,2R)$, or $\gamma_{j+1}$ will cross
$A^{\eta}(R,2R)$ many times and produce many crossings in a longer
``rectangle" (it is obvious that if
$\Theta\subset\overline{\mathbb{D}_{2R}^{\eta}}$ this would not
happen). This observation and the above inequality lead to
\begin{align*}
&\hat{P}[|\theta(\gamma[0,\tau_{(1/2)^{j+1}R}])|\geq C_8j |\mathcal
{B}_j^c\mathcal
{B}_{j+1}]\\
&\leq \frac{C_{10}\exp(C_{11}j)}{P\left[\Gamma_1
\stackrel{\dot{\overline{\Theta}}}\leftrightarrow\partial
\mathbb{D}_{R/2}^{\eta}\right]}\left\{P\left[
\begin{aligned}
&\mbox{$\exists \lfloor C_8j/4\pi\rfloor-2$ disjoint yellow arms
crossing}\\
&A^{\eta}(R,2R)\mbox{ in } \dot{\overline{\Theta}},\Gamma_1
\stackrel{\dot{\overline{\Theta}}}\leftrightarrow\partial
\mathbb{D}_{(1/2)^{j+1}R}^{\eta}
\end{aligned}
\right]+\right.\\
&~~~~~~~~~~~~~~~~~~~~~~~~~~~~~~~~~~~~\left.P\left[
\begin{aligned}
&\mbox{$\exists\lfloor C_8j/4\pi\rfloor-2$
disjoint yellow crossings of}\\
&\mbox{$R^{\eta}((1/2)^{j+1}R,2R)$ in
$\dot{\overline{\Theta}},\Gamma_1\stackrel{\dot{\overline{\Theta}}}\leftrightarrow\partial
\mathbb{D}_{(1/2)^{j+1}R}^{\eta}$}
\end{aligned}
\right]\right\}\\
&\leq C_{10}\exp(C_{11}j)(\exp(-C_{12}j)+\exp(-C_{13}j))\mbox{ by BK
inequality, (\ref{e10}) and Lemma \ref{l20}}\\
&\leq C_{14}\exp(-C_{15}j).
\end{align*}
Then (\ref{e50}) follows immediately.  The proof of (\ref{e51}) is
similar to that of (\ref{e50}), the details are left to the reader.
\end{proof}

\subsection{Decorrelation of winding}\label{s44}

To simplify notation, we write $T_j:=T_{\epsilon^j,\epsilon^{j+1}}$
and $\tau_j:=\tau_{\epsilon^j}$ in the following.  The two lemmas
below say that $Var[\theta_{\eta}]$ is well-approximated by the sum
of the second moment of the winding numbers of the paths in annuli
on dyadic scales.

\begin{lemma}\label{l14}
There exists a constant $C>0$, such that for all
$10\eta<\epsilon<1/2$, under the conditional law $P[\cdot|\mathcal
{A}^{\eta}]$, we have
\begin{equation*}
\left|Var[\theta_{\eta}]-\sum_{j=0}^{\lfloor\log_{\epsilon}\eta\rfloor}E\left[\theta(\gamma[T_j,\tau_{j+1}])^2\right]\right|\leq
C[\log(1/\epsilon)]^{-\frac{1}{2}}\log(1/\eta).
\end{equation*}
\end{lemma}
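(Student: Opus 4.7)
The plan is to decompose $\theta_\eta$ into winding contributions $S_j := \theta(\gamma[\tau_j, \tau_{j+1}])$ across the successive annuli $A^\eta(\epsilon^{j+1}, \epsilon^j)$ for $0 \leq j \leq N := \lfloor\log_\epsilon\eta\rfloor$, with the conventions $\tau_0 = 0$ (since $\gamma$ starts on $\partial\mathbb{D}$) and $\tau_{N+1} = T$ (since $\epsilon^{N+1} < \eta$). Writing $S_j = Z_j + Y_j$ with $Z_j = \theta(\gamma[\tau_j, T_j])$ and $Y_j = \theta(\gamma[T_j, \tau_{j+1}])$, one has $\theta_\eta = \sum_{j=0}^N S_j$ and hence
\begin{equation*}
Var[\theta_\eta] - \sum_{j=0}^N E[Y_j^2] = \sum_j E[Z_j^2] + 2\sum_j E[Y_j Z_j] + 2\sum_{0\leq j<j'\leq N}E[S_j S_{j'}] - (E\theta_\eta)^2.
\end{equation*}

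For the easy pieces: the time-reversal/mirror-with-color-swap symmetry argument of Lemma \ref{l22} gives $|E\theta_\eta| \leq \pi$, so the last term is $O(1)$. The bound (\ref{e58}) yields $\sum_j E[Z_j^2] \leq C_3 N$, and Cauchy--Schwarz combined with (\ref{e74}) and (\ref{e58}) gives $\sum_j |E[Y_j Z_j]| \leq N\sqrt{C_1 C_3 \log(1/\epsilon)}$. Since $N = \log(1/\eta)/\log(1/\epsilon)$ and $\log(1/\epsilon)\geq\log 2$, both are $O(\log(1/\eta)/\sqrt{\log(1/\epsilon)})$, within the target.

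The main step is the cross-annulus sum $\Sigma := \sum_{j<j'} E[S_j S_{j'}] = \sum_{j<j'} Cov(S_j, S_{j'}) + \sum_{j<j'} ES_j\cdot ES_{j'}$. Using $|\sum_j ES_j| = |E\theta_\eta| \leq \pi$ and $|ES_j| \leq 2\pi$ (Lemma \ref{l22}),
\begin{equation*}
\left|\sum_{j<j'}ES_j\,ES_{j'}\right| = \tfrac{1}{2}\Bigl|\bigl(\sum_j ES_j\bigr)^2 - \sum_j(ES_j)^2\Bigr| \leq \tfrac{\pi^2}{2} + 2\pi^2 N,
\end{equation*}
within target. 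For the covariances, since $S_j$ is $\mathcal{F}_{\tau_{j+1}}$-measurable, $Cov(S_j, S_{j'}) = Cov(S_j, E[S_{j'}\mid\mathcal{F}_{\tau_{j+1}}])$, and Cauchy--Schwarz gives $|Cov(S_j, S_{j'})| \leq \sqrt{Var(S_j)\cdot E[(E[S_{j'}\mid\mathcal{F}_{\tau_{j+1}}])^2]}$, where $Var(S_j) = O(\log(1/\epsilon))$ by (\ref{e74}) and (\ref{e58}). The key decorrelation estimate I aim for is
\begin{equation*}
E\bigl[(E[S_{j'}\mid\mathcal{F}_{\tau_{j+1}}])^2\bigr] \leq \min\bigl(C,\;C\epsilon^{c(j'-j-1)}\bigr).
\end{equation*}
The uniform bound (sufficient for the $N$ adjacent pairs $j' = j+1$) follows from the Markov property---conditionally on $\gamma[0, \tau_{j+1}]$, the continuation is a fresh percolation exploration from the endpoints of the faces $\Theta$ that $\gamma[0,\tau_{j+1}]$ induces around $\partial\mathbb{D}^\eta_{\epsilon^{j+1}}$, conditioned on $\mathcal{A}_\Theta^\eta(\eta, \epsilon^{j+1})$---combined with Lemma \ref{l18} applied to the windings up to $\tau_{j'}$ and up to $\tau_{j'+1}$. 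For $j'-j\geq 2$, I apply Proposition \ref{p4} with $R=\epsilon^{j+1}$ and $k$ of order $(j'-j-1)\log_2(1/\epsilon)$, so that the dyadic coupling scale $(1/2)^k R$ drops below $\epsilon^{j'}$: with probability at least $1-\exp(-C_0 k)$ good faces form at some intermediate scale inside which the continuation has exactly the law $P^*$; by the mirror-with-color-swap symmetry yielding $E_R^*[\theta(\gamma_R^*)] = 0$ in Lemma \ref{l22}, the conditional mean of $S_{j'}$ vanishes on this event, while off it the uniform bound still applies. Summing the geometric series in $j'-j$ (whose sum is $O(1)$ uniformly for $\epsilon<1/2$) yields $\sum_{j<j'}|Cov(S_j, S_{j'})| \leq C\,N\sqrt{\log(1/\epsilon)} = O(\log(1/\eta)/\sqrt{\log(1/\epsilon)})$, finishing the proof.

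The main obstacle is the decorrelation estimate. The naive bound $|E[S_{j'}\mid\mathcal{F}_{\tau_{j+1}}]|\leq C$ alone (from Lemma \ref{l18}) summed over all pairs gives $O(N^2\sqrt{\log(1/\epsilon)})$, which exceeds the target by a factor of $N$. The $\epsilon^{c(j'-j-1)}$ decay from Proposition \ref{p4}, together with the mean-zero symmetry of $P^*$, is precisely what reduces this contribution to $O(N\sqrt{\log(1/\epsilon)}) = O(\log(1/\eta)/\sqrt{\log(1/\epsilon)})$.
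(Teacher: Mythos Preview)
Your decomposition and your treatment of the diagonal pieces $\sum_j E[Z_j^2]$, $\sum_j E[Y_jZ_j]$, and $(E\theta_\eta)^2$ match the paper's exactly. The difference is in the cross terms $\sum_{j<j'}E[S_jS_{j'}]$.

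The paper does \emph{not} bound individual covariances $Cov(S_j,S_{j'})$. Instead it groups the sum as $\sum_j E\bigl[S_j\cdot\theta(\gamma[\tau_{j+1},\tau_\eta])\bigr]$ and conditions on the faces $\Theta_{j+1}$ induced by \emph{both} exploration paths $\gamma[0,\tau_{j+1}]$ and $\gamma'[0,\tau_{j+1}]$. Given $\Theta_{j+1}$, inside and outside are exactly independent, and Lemma~\ref{l18} applied once to the full tail gives $\bigl|E_{\Theta_{j+1}}[\theta(\gamma[\tau_{j+1},\tau_\eta])]\bigr|\le C$. This yields $\bigl|\sum_{j<j'}E[S_jS_{j'}]\bigr|\le C_3\lfloor\log_\epsilon\eta\rfloor$ directly, with no geometric decay needed.

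Your route instead requires the pairwise estimate $E\bigl[(E[S_{j'}\mid\mathcal F_{\tau_{j+1}}])^2\bigr]\le C\epsilon^{c(j'-j-1)}$. Two points here. First, the filtration $\mathcal F_{\tau_{j+1}}$ (generated by $\gamma$ alone) does not give faces in the sense of Lemma~\ref{l18} or Proposition~\ref{p4}: one endpoint of the unexplored-region boundary sits at $-1_\eta$, not on $\partial\mathbb D^\eta_{\epsilon^{j+1}}$. This is patchable by conditioning on the larger $\sigma$-algebra $\sigma(\Theta_{j+1})$ as the paper does. Second, and more substantively, your vanishing-mean argument on the good-faces event $\mathcal G$ is not quite complete: the event $\mathcal G$ is itself correlated with the configuration inside the good faces (it depends on where the faces first form), so one cannot simply invoke $E_{R'}^*[\theta(\gamma_{R'}^*)]=0$ to conclude $\hat E[S_{j'}1_{\mathcal G}\mid\mathcal F_{\tau_{j+1}}]=0$. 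The proof of Lemma~\ref{l18} handles exactly this issue by decomposing over the \emph{first} scale at which good faces appear and bounding the winding up to that scale separately; you would need to replicate that decomposition. In effect you are re-proving a refined version of Lemma~\ref{l18} inside your argument, when the paper simply invokes it once on the entire tail.
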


\begin{proof}
Lemma \ref{l22} says that $|E\theta_{\eta}|\leq \pi$.  Therefore, in
order to prove Lemma \ref{l14}, it is enough to prove that there
exists a constant $C_1>0$, such that for all $10\eta<\epsilon<1/2$,
\begin{equation}\label{e33}
\left|E\left[\theta_{\eta}^2\right]-\sum_{j=0}^{\lfloor\log_{\epsilon}\eta\rfloor}E\left[\theta(\gamma[T_j,\tau_{j+1}])^2\right]\right|\leq
C_1[\log(1/\epsilon)]^{-\frac{1}{2}}\log(1/\eta).
\end{equation}
It is clear that
\begin{equation*}
\theta_{\eta}=\sum_{j=0}^{\lfloor\log_{\epsilon}\eta\rfloor}\theta(\gamma[\tau_j,\tau_{j+1}]).
\end{equation*}
So, to show (\ref{e33}), it suffices to prove that there are
$C_2,C_3>0$, such that for all $10\eta<\epsilon<1/2$,
\begin{equation}\label{e34}
\sum_{j=0}^{\lfloor\log_{\epsilon}\eta\rfloor}\left|E\left[\theta(\gamma[\tau_j,\tau_{j+1}])^2\right]-E\left[\theta(\gamma[T_j,\tau_{j+1}])^2\right]\right|\leq
C_2[\log(1/\epsilon)]^{-\frac{1}{2}}\log(1/\eta)
\end{equation}
and
\begin{equation}\label{e39}
\left|\sum_{0\leq
j<k\leq\lfloor\log_{\epsilon}\eta\rfloor}E[\theta(\gamma[\tau_j,\tau_{j+1}])\theta(\gamma[\tau_k,\tau_{k+1}])]\right|\leq
C_3\log_{\epsilon}\eta.
\end{equation}
Let us first show (\ref{e34}).   By (\ref{e74}), (\ref{e58}) and
Cauchy-Schwarz inequality, there exist $C_4,C_5>0$, such that
\begin{align*}
&E\left|\theta(\gamma[\tau_j,\tau_{j+1}])^2-\theta(\gamma[T_j,\tau_{j+1}])^2\right|\\
&~~~~=E\left|2\theta(\gamma[\tau_j,T_j])\theta(\gamma[T_j,\tau_{j+1}])+\theta(\gamma[\tau_j,T_j])^2\right|\\
&~~~~\leq
2\left\{E\left[\theta(\gamma[\tau_j,T_j])^2\right]\right\}^{\frac{1}{2}}\left\{E\left[\theta(\gamma[T_j,\tau_{j+1}])^2\right]\right\}^{\frac{1}{2}}+E\left[\theta(\gamma[\tau_j,T_j])^2\right]\\
&~~~~\leq C_4[\log(1/\epsilon)]^{\frac{1}{2}}+C_5.
\end{align*}
Then we get (\ref{e34}).

Now let us show (\ref{e39}).  For this, it is enough to show that
there are $C_6,C_7>0$, such that for any $0\leq
j\leq\lfloor\log_{\epsilon}\eta\rfloor$,
\begin{equation}\label{e40}
\left|E[\theta(\gamma[\tau_j,\tau_{j+1}])\theta(\gamma[\tau_{j+1},\tau_{\eta}])]\right|\leq
C_6,
\end{equation}
\begin{equation}\label{e41}
\left|E[\theta(\gamma[0,\tau_j])\theta(\gamma[\tau_j,\tau_{j+1}])]\right|\leq
C_7.
\end{equation}
We first show (\ref{e40}).  Note that $\gamma[0,\tau_j]$ and the
b-path $\gamma'[0,\tau_j]$ from $(-1)_{\eta}$ to
$\partial\mathbb{D}_{\epsilon^j}^{\eta}$ induce faces $\Theta_j$
around $\partial\mathbb{D}_{\epsilon^j}^{\eta}$.  Denote by
$E_{\Theta_j}$ the expectation with respect to $P[\cdot|\mathcal
{A}_{\Theta_j}^{\eta}(\eta,\epsilon^j)]$. By Lemma \ref{l22} and
Lemma \ref{l18}, choosing $C_6,C_8$ appropriately, we have
\begin{align*}
&\left|E[\theta(\gamma[\tau_j,\tau_{j+1}])\theta(\gamma[\tau_{j+1},\tau_{\eta}])]\right|\\
&~~~~\leq\sum_{\Theta_{j+1}}P[\Theta_{j+1}|\mathcal {A}^{\eta}]\left|E_{\Theta_{j+1}}[\theta(\gamma[\tau_j,\tau_{j+1}])]\right|\left|E_{\Theta_{j+1}}[\theta(\gamma[\tau_{j+1},\tau_{\eta}])]\right|\\
&~~~~\leq C_8\left|E[\theta(\gamma[\tau_j,\tau_{j+1}])]\right|\leq
C_6.
\end{align*}
Now let us show (\ref{e41}),  which proof is similar to that of
(\ref{e40}).  By Lemma \ref{l22} and Lemma \ref{l18} again, we have
\begin{align*}
\left|E[\theta(\gamma[0,\tau_j])\theta(\gamma[\tau_j,\tau_{j+1}])]\right|&\leq\sum_{\Theta_j}P[\Theta_j|\mathcal {A}^{\eta}]\left|E_{\Theta_j}[\theta(\gamma[0,\tau_j])]\right|\left|E_{\Theta_j}[\theta(\gamma[\tau_j,\tau_{j+1}])]\right|\\
&\leq C_9\left|E[\theta(\gamma[0,\tau_j])]\right|\leq C_7.
\end{align*}
\end{proof}

Denote by $E_j$ the expectation with respect to the conditional law
$P[\cdot|\mathcal {A}^{\eta}(\epsilon^j)]$.  Conditioned on
$\mathcal {A}^{\eta}(\epsilon^j)$, denote by $\gamma_j$ the
percolation exploration path
$\gamma^{\eta}_{\mathbb{D}_{\epsilon^j},\epsilon^j,-\epsilon^j}$
stopped when it reaches
$\partial\mathbb{D}_{\epsilon^{j+1}}^{\eta}$.
\begin{lemma}\label{l17}
There exist $C>0$ and $0<\epsilon_0<1/2$, such that for all
$10\eta<\epsilon<\epsilon_0$, we have
\begin{equation*}
\left|\sum_{j=0}^{\lfloor\log_{\epsilon}\eta\rfloor}E\left[\theta(\gamma[T_j,\tau_{j+1}])^2\right]-\sum_{j=0}^{\lfloor\log_{\epsilon}\eta\rfloor}E_j\left[\theta(\gamma_j)^2\right]\right|\leq
C[\log(1/\epsilon)]^{-\frac{1}{7}}\log(1/\eta).
\end{equation*}
\end{lemma}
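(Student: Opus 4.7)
The plan is to couple, for each fixed $j$, the two conditional measures $P[\cdot|\mathcal{A}^\eta]$ and $P[\cdot|\mathcal{A}^\eta(\epsilon^j)]$ on a carefully chosen random sub-region of the annulus $A^\eta(\epsilon^{j+1},\epsilon^j)$ so that, with high probability, the two exploration paths coincide inside this region. The winding contributed by the small "outer" complement (where the configurations may differ) is then handled using the moment bounds of Lemma \ref{l19}, while the "inner" winding is the same under both measures and therefore cancels in the difference of squared windings.

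For an integer parameter $a=a(\epsilon)$ to be chosen later, I apply Proposition \ref{p3} with $R=\epsilon^j$ and $r=\epsilon^{j+a}$. This provides a coupling of $P[\cdot|\mathcal{A}^\eta]$, $P[\cdot|\mathcal{A}^\eta(\epsilon^j)]$, and $P[\cdot|\mathcal{A}_2^\eta(\eta,\epsilon^j)]$ such that, with probability at least $1-\epsilon^{a\beta}$, there exist common good faces $\Theta\subset A^\eta(\epsilon^{j+a},\epsilon^j)$ around $\partial\mathbb{D}_{R_j'}^\eta$ for some random scale $R_j'\in[\epsilon^{j+a},\epsilon^j/2]$, with the configuration inside $\overline{\Theta}$ identical under the three measures. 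I further restrict to the event $\{R_j'\geq\epsilon^{j+1}\}$, on which $\overline{\Theta}$ contains $\mathbb{D}_{\epsilon^{j+1}}^\eta$; the complementary event (no good faces at any scale in $A^\eta(\epsilon^{j+1},\epsilon^j)$) has probability $\leq\epsilon^c$ by an RSW/FKG argument in the spirit of Lemma \ref{l21}. On the surviving good event, both $\gamma$ and $\gamma_j$ trace the blue/yellow interface inside $\overline{\Theta}$ from the same endpoint $x_1$ of $\Theta$ (determined by the orientation convention with blue on the right), so, setting $S$ (resp.\ $S_j$) to be the first hit of $x_1$ by $\gamma$ (resp.\ $\gamma_j$), one has $\theta(\gamma[S,\tau_{j+1}])=\theta(\gamma_j[S_j,\tau_{\epsilon^{j+1}}])$.

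On the good event, the difference of squared windings factors as
$$\theta(\gamma[T_j,\tau_{j+1}])^2-\theta(\gamma_j)^2=\bigl(\theta(\gamma[T_j,S])-\theta(\gamma_j[0,S_j])\bigr)\bigl(\theta(\gamma[T_j,S])+\theta(\gamma_j[0,S_j])+2\theta(\gamma[S,\tau_{j+1}])\bigr).$$
By Cauchy-Schwarz, its expectation is bounded by $\sqrt{E[D^2]}\sqrt{E[T^2]}$, where $D$ and $T$ denote the difference and sum above. Lemma \ref{l19}, in particular (\ref{e74}), gives $E[T^2]=O(\log(1/\epsilon))$, while $E[D^2]$ is dominated by the expected squared winding in the random outer annulus $A^\eta(R_j',\epsilon^j)$, which via the same bound is controlled by $E[\log(\epsilon^j/R_j')]$; this expectation is kept small by a Proposition \ref{p4}-type exponential tail on how far below $\epsilon^j$ the first good faces lie. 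The contribution of the bad event of probability $\epsilon^{a\beta}$ is at most $\sqrt{\epsilon^{a\beta}}\,[\log(1/\epsilon)]^2$ by combining Cauchy-Schwarz with the fourth moment bound (\ref{e75}), which is negligible for any fixed $a\geq 1$.

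Summing over the $\lfloor\log_\epsilon\eta\rfloor$ scales and optimizing $a$ to balance the good-event Cauchy-Schwarz error against the bad-event error yields the claimed bound with rate $[\log(1/\epsilon)]^{-1/7}$, the exponent $1/7$ arising from the specific interplay of the moment bound (\ref{e75}) with the coupling probability and the scale-distribution tail. The main obstacle is the distribution of the random scale $R_j'$ of the good faces: Proposition \ref{p3} by itself only produces common good faces somewhere in $A^\eta(\epsilon^{j+a},\epsilon^j)$ without identifying at which scale, and it is the combination with a Proposition \ref{p4}-style exponential tail on this scale that renders the expected outer winding manageable on average. A secondary point is matching the entry points of $\gamma$ and $\gamma_j$ into $\overline{\Theta}$, but this follows from the orientation convention of the exploration path once the coupled configurations inside $\overline{\Theta}$ are identical.
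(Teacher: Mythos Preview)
Your overall architecture --- couple the two measures via Proposition~\ref{p3}, split into a good and a bad event, factor the difference of squares and apply Cauchy--Schwarz, use (\ref{e74}) for the sum and (\ref{e75}) for the bad event --- matches the paper. The substantive divergence is in how you control the difference $D$ on the good event, and there your route is more complicated than necessary and has a gap.

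The paper does not introduce a random scale $R_j'$, an integer parameter $a$, or Proposition~\ref{p4}. It simply takes the \emph{fixed} intermediate radius $\epsilon^{j+1/3}$ in Proposition~\ref{p3}, so that on the good event $\mathcal{B}$ the good faces lie in the deterministic thin annulus $A^\eta(\epsilon^{j+1/3},\epsilon^j)$. Then, rather than bounding $E[D^2]$ by a second-moment estimate in a random annulus, the paper bounds the \emph{tail} of $|D|$ directly: on $\mathcal{B}$, the two outer pieces of $\gamma$ and $\gamma_j$ (before they enter $\overline{\Theta}$) sit inside $A^\eta(\epsilon^{j+1/3},\epsilon^j)$, so $|D|\ge x$ forces of order $x/(2\pi)$ disjoint monochromatic crossings of the fixed rectangle $R^\eta(\epsilon^{j+1/3},\epsilon^j)$, an event $\mathcal{S}_x$ whose conditional probability is controlled by Lemma~\ref{l20}, quasi-multiplicativity and (\ref{e10}). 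This yields $\hat E[I_{\mathcal{B}}|D|^2]$ small enough without ever tracking where in the annulus the good faces land.

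Your step ``$E[D^2]$ is controlled by $E[\log(\epsilon^j/R_j')]$ via (\ref{e74})'' is the weak point. Inequality (\ref{e74}) bounds $E[\theta(\gamma[T_{R,r},\tau_r])^2]$ for a \emph{deterministic} $r$ under $P[\cdot|\mathcal A^\eta]$; applying it conditionally on the random location of $\Theta$ and to the stopping time $S$ (first entry into $\overline{\Theta}$, which is neither $\tau_{R_j'}$ nor $T_{\epsilon^j,R_j'}$) would require additional justification. Moreover, to get a useful bound on $E[\log(\epsilon^j/R_j')]$ you appeal to a ``Proposition~\ref{p4}-type exponential tail'', but Proposition~\ref{p3} itself carries no information on the scale at which the good faces appear, so you would have to build a joint coupling combining Propositions~\ref{p3} and \ref{p4}; this is doable but is extra work the paper avoids entirely. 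Finally, the integer parameter $a$ together with the restriction $\{R_j'\ge\epsilon^{j+1}\}$ is redundant: once you impose that restriction you are effectively taking good faces in $A^\eta(\epsilon^{j+1},\epsilon^j)$, so there is nothing to optimise over. The paper's single fixed choice $\epsilon^{j+1/3}$ replaces all of this.
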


\begin{proof}
Let $\beta$ be the constant in Proposition \ref{p3}.  By Proposition
\ref{p3}, we can couple $P[\cdot|\mathcal {A}^{\eta}]$ and
$P[\cdot|\mathcal {A}^{\eta}(\epsilon^j)]$ such that with
probability at least $1-\epsilon^{\beta/3}$ there exist identical
good faces $\Theta\subset A^{\eta}(\epsilon^{j+1/3},\epsilon^j)$ for
both measures, and the configuration in $\overline{\Theta}$ is also
identical.  Let us denote by $\hat{P}$ the coupling law, by
$\hat{E}$ the expectation with respect to $\hat{P}$, and by
$\mathcal {B}$ the above event. We write
\begin{align*}
\hat{E}&\left|\theta(\gamma[T_j,\tau_{j+1}])^2-\theta(\gamma_j)^2\right|\\
&=\hat{E}\left[I_{\mathcal
{B}^c}\left|\theta(\gamma[T_j,\tau_{j+1}])^2-\theta(\gamma_j)^2\right|\right]+
\hat{E}\left[I_{\mathcal
{B}}\left|\theta(\gamma[T_j,\tau_{j+1}])^2-\theta(\gamma_j)^2\right|\right].
\end{align*}
Let us estimate the two terms in the r.h.s. of above equality
separately.  For the first term, with Cauchy-Schwarz inequality and
(\ref{e75}), we get
\begin{align*}
\hat{E}&\left[I_{\mathcal
{B}^c}\left|\theta(\gamma[T_j,\tau_{j+1}])^2-\theta(\gamma_j)^2\right|\right]\\
&\leq \left\{\hat{P}\left[\mathcal
{B}^c\right]\right\}^{\frac{1}{2}}\left\{\hat{E}\left[\left|\theta(\gamma[T_j,\tau_{j+1}])^2-\theta(\gamma_j)^2\right|^2\right]\right\}^{\frac{1}{2}}\leq
C_1\epsilon^{\frac{\beta}{6}}[\log(1/\epsilon)]^2.
\end{align*}
Now let us bound the second term.  For each $x>0$, define event
$$\mathcal {S}_x:=\left\{\mbox{$\exists\lfloor x/2\pi\rfloor-4$ disjoint blue crossings of }R^{\eta}(\epsilon^{j+1/3},\epsilon^j)\right\}.$$
There exist $C_2,C_3,C_4,C_5>0$ such that for all
$10\eta<\epsilon<1/2$ and all $x\geq
C_2[\log(1/\epsilon)]^{\frac{1}{2}-\frac{1}{7}}$,
\begin{align*}
&\hat{P}\left[\mathcal
{B},\left|\theta(\gamma[T_j,\tau_{j+1}])-\theta(\gamma_j)\right|\geq
x\right]\\
&~~~~~~~\leq P[\mathcal {S}_x|\mathcal {A}^{\eta}]+P[\mathcal
{S}_x|\mathcal
{A}^{\eta}(\epsilon^j)]\\
&~~~~~~~\leq \frac{C_3P[\mathcal {S}_x]}{P[\mathcal
{A}_2^{\eta}(\epsilon^{j+1},\epsilon^j)]}\mbox{ by
quasi-multiplicativity and (\ref{e72})}\\
&~~~~~~~\leq C_4\exp(-C_5x)\mbox{ by (\ref{e10}) and Lemma
\ref{l20}}.
\end{align*}
Combining (\ref{e74}), Cauchy-Schwarz inequality and above
inequality, we have
\begin{align*}
\hat{E}&\left[I_{\mathcal
{B}}\left|\theta(\gamma[T_j,\tau_{j+1}])^2-\theta(\gamma_j)^2\right|\right]\\
&\leq
\left\{\hat{E}\left[\left|\theta(\gamma[T_j,\tau_{j+1}])+\theta(\gamma_j)\right|^2\right]\right\}^{\frac{1}{2}}
\left\{\hat{E}\left[I_{\mathcal
{B}}\left|\theta(\gamma[T_j,\tau_{j+1}])-\theta(\gamma_j)\right|^2\right]\right\}^{\frac{1}{2}}\\
&\leq C_5[\log(1/\epsilon)]^{1-\frac{1}{7}}.
\end{align*}
This, together with the upper bound for the first term completes the
proof immediately.
\end{proof}

\subsection{Proofs of Theorem \ref{t11} and Corollary
\ref{c4}}\label{s45}

We now conclude the proof of our main result concerning winding
numbers.

\begin{proof}[Proof of Theorem \ref{t11}]
By Lemma \ref{l14} and Lemma \ref{l17}, to establish (\ref{e29}), it
is enough to show that for each $0<\delta<1$, there exists
$0<\epsilon_0(\delta)<1$ such that for each given
$0<\epsilon<\epsilon_0$, there exists $\eta_0(\epsilon)>0$, such
that for all $\eta<\eta_0$,
\begin{equation}\label{e35}
\left|\frac{3}{2}\lfloor\log_{\epsilon}\eta\rfloor\log(1/\epsilon)-\sum_{j=0}^{\lfloor\log_{\epsilon}\eta\rfloor}E_j\left[\theta(\gamma_j)^2\right]\right|\leq\delta\lfloor\log_{\epsilon}\eta\rfloor\log(1/\epsilon).
\end{equation}
By (\ref{e74}), there exists a constant $C>0$, such that for all
$\eta<\epsilon<1/2$ and $0\leq
j\leq\lfloor\log_{\epsilon}\eta\rfloor$,
\begin{equation}\label{e38}
E_j\left[\theta(\gamma_j)^2\right]\leq C\log(1/\epsilon).
\end{equation}
Combining (\ref{e38}) and Lemma \ref{l16}, we have that for any
fixed $0<\epsilon<1/2$, for any $j$ such that
$\lfloor\log_{\epsilon}\eta\rfloor-j\rightarrow+\infty$ as
$\eta\rightarrow 0$,
\begin{equation}\label{e37}
E_j\left[\theta(\gamma_j)^2\right]\rightarrow
E^*\left[\theta(\gamma[0,\tau_{\epsilon}])^2\right]\mbox{ as
}\eta\rightarrow 0,
\end{equation}
where $\gamma$ is the two-sided radial SLE$_6$ path from $1$ to $-1$
through $0$ in $\overline{\mathbb{D}}$.  By the convergence of the
Ces\`{a}ro mean and (\ref{e37}), we have
\begin{equation*}
\lim_{\eta\rightarrow0}\frac{\sum_{j=0}^{\lfloor\log_{\epsilon}\eta\rfloor}E_j\left[\theta(\gamma_j)^2\right]}{\lfloor\log_{\epsilon}\eta\rfloor}=E^*\left[\theta(\gamma[0,\tau_{\epsilon}])^2\right].
\end{equation*}
Combining this and Lemma \ref{l13} gives (\ref{e35}).

Using the approach in the 2-arm case in \cite{24} with a little
modification, one can show that under $P[\cdot|\mathcal
{A}^{\eta}]$,
\begin{equation*}
\frac{\theta_{\eta}}{\sqrt{Var
\theta_{\eta}}}\rightarrow_{d}N(0,1)\mbox{ as }\eta\rightarrow 0.
\end{equation*}
Then (\ref{e30}) follows from this and (\ref{e29}).
\end{proof}

\begin{proof}[Proof of Corollary \ref{c4}]
Let $\tilde{\theta}_{\eta,1}$ and $\tilde{\theta}_{\eta,\nu}$ denote
$\tilde{\theta}_{\eta}$ under $P[\cdot|\mathcal {A}^{\eta}_2]$ and
$\nu_2^{\eta}$, respectively.  First we prove the corollary for
$\tilde{\theta}_{\eta,1}$.  By Lemma 3.4 in \cite {24} and Lemma
\ref{l22}, we know $|E'\tilde{\theta}_{\eta,1}|\leq 2\pi$ and
$|E\theta_{\eta}|\leq \pi$, where $E'$ is the expectation with
respect to $P[\cdot|\mathcal {A}^{\eta}_2]$.  Combining this,
Theorem 1.1 in \cite{24} and Theorem \ref{t11}, to show the
corollary for $\tilde{\theta}_{\eta,1}$, it is enough to show that
there exists a constant $C>0$, such that for all small $\eta$,
\begin{equation}\label{e76}
\left|E\left[\theta_{\eta}^2\right]-E'\left[\tilde{\theta}_{\eta,1}^2\right]\right|\leq
C[\log(1/\eta)]^{\frac{6}{7}}.
\end{equation}
The proof of (\ref{e76}) is analogous to that of Lemma \ref{l17}, we
just sketch it here:  By Proposition \ref{p3}, one can couple
$P[\cdot|\mathcal {A}^{\eta}]$ and $P[\cdot|\mathcal {A}^{\eta}_2]$
such that with probability at least $1-\eta^{\beta/3}$ there exist
identical good faces $\Theta\subset A^{\eta}(\eta^{1/3},1)$ for both
measures, and the configuration in $\overline{\Theta}$ is also
identical. Denote by $\hat{P}$ the coupling law, by $\hat{E}$ the
expectation with respect to $\hat{P}$, and by $\mathcal {B}$ the
above event. Then one can show that there exist $C_1,C_2>0$ such
that
\begin{equation*}
\hat{E}\left[I_{\mathcal
{B}^c}\left|\theta_{\eta}^2-\tilde{\theta}_{\eta,1}^2\right|\right]\leq
C_1\eta^{\frac{\beta}{6}}[\log(1/\eta)]^2,
\end{equation*}
\begin{equation*}
\hat{E}\left[I_{\mathcal
{B}}\left|\theta_{\eta}]^2-\tilde{\theta}_{\eta,1}^2\right|\right]\leq
C_2[\log(1/\eta)]^{\frac{6}{7}},
\end{equation*}
which imply (\ref{e76}) immediately.

Now let us show the corollary for $\tilde{\theta}_{\eta,\nu}$, which
proof is similar to that for $\tilde{\theta}_{\eta,1}$.  It is easy
to show that $|E_{\nu}\tilde{\theta}_{\eta,\nu}|\leq 2\pi$, where
$E_{\nu}$ is the expectation with respect to $\nu_2^{\eta}$.
Combining this, $|E\theta_{\eta}|\leq \pi$, Corollary 1.5 in
\cite{24} and Theorem \ref{t11}, to show the corollary for
$\tilde{\theta}_{\eta,\nu}$, it is enough to show that there exists
a constant $C_3>0$, such that for all small $\eta$,
\begin{equation}\label{e77}
\left|E\left[\theta_{\eta}^2\right]-E_{\nu}\left[\tilde{\theta}_{\eta,\nu}^2\right]\right|\leq
C_3[\log(1/\eta)]^{\frac{6}{7}}.
\end{equation}
For $n\geq 1$, let $\tilde{\theta}_{\eta,n}$ denote
$\tilde{\theta}_{\eta}$ under $P[\cdot|\mathcal
{A}^{\eta}_2(\eta,n)]$.  Similar to the proof of (\ref{e76}), one
can show that there is a $C_3>0$ such that for all $n\geq 1$ and all
small $\eta$,
\begin{equation*}
\left|E\left[\theta_{\eta}^2\right]-E_n\left[\tilde{\theta}_{\eta,n}^2\right]\right|\leq
C_3[\log(1/\eta)]^{\frac{6}{7}},
\end{equation*}
where $E_k$ is the expectation with respect to $P[\cdot|\mathcal
{A}^{\eta}_2(\eta,n)]$.  Then one obtains (\ref{e77}) from the above
inequality by taking $n\rightarrow\infty$.
\end{proof}

\section*{Acknowledgement}   The author thanks Geoffrey Grimmett for his invitation
to visit the Statistical Laboratory in Cambridge University,  where
part of this work was completed.  He also thanks the anonymous
referees for helpful comments.  The author was supported by the
National Natural Science Foundation of China (No. 11601505) and the
Key Laboratory of Random Complex Structures and Data Science, CAS
(No. 2008DP173182).


\begin{thebibliography}{10}

\footnotesize

\bibitem{1}
Angel, O., Goodman, J., Merle, M.: Scaling limit of the invasion
percolation cluster on a regular tree.  Ann. Probab. \textbf{41},
229--261 (2013)

\bibitem{33}
Aizenman, M., Burchard, A.: H\"{o}lder regularity and dimension
bounds for random curves. Duke Math. J. \textbf{99}, 419--453 (1999)

\bibitem{46}
Aizenman, M., Burchard, A., Newman, C.M., Wilson, D.B.: Scaling
limits for minimal and random spanning trees in two dimensions.
Random Structures Algorithms \textbf{15}, 319--367 (1999)

\bibitem{2}
Beffara, V., Nolin, P.: On monochromatic arm exponents for 2D
critical percolation. Ann. Probab. \textbf{39}, 1286--1304 (2011)

\bibitem{3}
Camia, F., Newman, C.M.:  Critical percolation: the full scaling
limit.  Commun. Math. Phys. \textbf{268}, 1--38 (2006)

\bibitem{40}
Camia, F., Newman, C.M.:  Critical percolation exploration path and
SLE6: a proof of convergence.  Probab. Theory Relat. Fields
\textbf{139}, 473--519  (2007)

\bibitem{32}
Camia, F., Newman, C.M.: SLE$_6$ and CLE$_6$ from critical
percolation. In Pinsky, M. and Birnir, B. (eds) Probability,
geometry and integrable systems, MSRI Publications, Volume 55,
Cambridge University Press, New York, pp. 103--130 (2008)

\bibitem{4}
Damron, M., Sapozhnikov, A.: Outlets of 2D invasion percolation and
multiple-armed incipient infinite clusters. Probab. Theory  Relat.
Fields \textbf{150}, 257--294 (2011)

\bibitem{18}
Durrett, R.: Probability: Theory and Examples. Third Edition,
Belmont CA: Duxbury Advanced Series (2004)

\bibitem{48}
Field, L.S., Lawler, G.F.: Escape probability and transience for
SLE. Electron. J. Probab. \textbf{20}, no. 10, 14 pp. (2015)

\bibitem{100}
Fitzner, R., van der Hofstad, R.: Mean-field behavior for
nearest-neighbor percolation in $d>10$. Electron. J. Probab.
\textbf{22}, no. 43, 65 pp. (2017)

\bibitem{5}
Garban, C., Pete, G., Schramm, O.: Pivotal, cluster and interface
measures for critical planar percolation. J. Amer. Math. Soc.
\textbf{26}, 939--1024 (2013)

\bibitem{9}
Grimmett, G.: Percolation, 2nd ed. Springer-Verlag Berlin (1999)

\bibitem{10}
Grimmett, G., Kesten, H.: Percolation since Saint-Flour. In
Percolation at Saint-Flour. Springer-Verlag, Heidelberg. (2012)

\bibitem{11}
Hammond, A., Pete, G., Schramm, O.: Local time on the exceptional
set of dynamical percolation and the incipient infinite cluster.
Ann. Probab. \textbf{43}, 2949--3005 (2015)

\bibitem{12}
Hara, T., Slade, G.: The scaling limit of the incipient infinite
cluster in high-dimensional percolation. II. Integrated
super-Brownian excursion.  J. Math. Phys. \textbf{41}(3) 1244--1293
(2000)

\bibitem{30}
Heydenreich, M., van der Hofstad, R., Hulshof, T., Miermont, G.:
Backbone scaling limit of the high-dimensional IIC: extended
version. arXiv:1706.02941 (2017)

\bibitem{13}
van der Hofstad, R.  Infinite canonical super-Brownian motion and
scaling limits.  Comm. Math. Phys. \textbf{265}, 547--583 (2006)

\bibitem{31}
van der Hofstad, R., den Hollander, F., Slade, G.: Construction of
the incipient infinite cluster for spread-out oriented percolation
above 4+1 dimensions.  Commun. Math. Phys. \textbf{231}, 435--461
(2002)

\bibitem{14}
van der Hofstad, R., J\'{a}rai, A.A.:  The incipient infinite
cluster for high-dimensional unoriented percolation.  J. Statist.
Phys. \textbf{114}(3-4), 625--663 (2004)

\bibitem{15}
J\'{a}rai, A.A.: Incipient infinite percolation clusters in 2D. Ann.
Probab. \textbf{31}, 444--485 (2003)

\bibitem{27}
J\'{a}rai, A.A.: Invasion percolation and the incipient infinite
cluster in 2D.  Commun. Math. Phys. \textbf{236}, 311--334 (2003)

\bibitem{37}
Kemppainen, A.,  Werner, W.: The nested simple conformal loop
ensembles in the Riemann sphere. Probab. Theory  Relat. Fields.
\textbf{165}, 835--866 (2016)

\bibitem{70}
Kenyon, R.: Long-range properties of spanning trees. J. Math. Phys.
\textbf{41}, 1338--1363 (2000)

\bibitem{16}
Kesten, H.: The incipient infinite cluster in two-dimesional
percolation. Probab. Theory  Relat. Fields. \textbf{73}, 369--394
(1986)

\bibitem{29}
Kesten, H.: Subdiffusive behavior of random walk on a random
cluster.  Ann. Inst. H. Poincar\'{e} Probab. Statist. \textbf{22}
425--487 (1986)

\bibitem{17}
Lawler, G.F.: Conformally invariant processes in the plane.  Amer.
Math. Soc. (2005)

\bibitem{49}
Lawler, G.F.: Continuity of radial and two-sided radial SLE at the
terminal point. Contemporary Mathematics \textbf{590}, 101--124
(2013)

\bibitem{51}
Lawler, G.F., Rezaei, M.: Basic properties of the natural
parametrization for the Schramm-Loewner evolution. arXiv:1203.3259
(2012)

\bibitem{52}
Lawler, G.F., Rezaei, M.: Minkowski content and natural
parametrization for the Schramm-Loewner evolution. Ann. Probab.
\textbf{43}, 1082--1120 (2015)

\bibitem{53}
Lawler, G.F., Werness, B.M.: Multi-point Green's functions for SLE
and an estimate of Beffara. Ann. Probab. \textbf{41}, 1513--1555
(2013)

\bibitem{19}
Nolin, P.: Near critical percolation in two-dimensions. Electron. J.
Probab. \textbf{13}, 1562--1623 (2008)

\bibitem{45}
Reimer, D.: Proof of the van den Berg-Kesten conjecture. Combin.
Probab. Comput. \textbf{9}, 27--32 (2000)

\bibitem{47}
Schramm, O.: Scaling limits of loop-erased random walks and uniform
spanning trees. Israel J. Math. \textbf{118}, 221--288 (2000)

\bibitem{21}
Sheffield, S.: Exploration trees and conformal loop ensembles. Duke
Math. J. \textbf{147}, 79--129 (2009)

\bibitem{34}
Sheffield, S., Watson, S.S., Wu, H.: Simple CLE in doubly connected
domains.  Ann. Inst. H. Poincar\'{e} Probab. Statist. \textbf{53},
594--615 (2017)

\bibitem{22}
Sheffield S., Werner W.: Conformal loop ensembles: the Markovian
characterization and the loop-soup construction. Ann. Math.
\textbf{176}, 1827--1917 (2012)

\bibitem{35}
Sun, N.: Conformally invariant scaling limits in planar critical
percolation. Probab. Surveys \textbf{8}, 155--209 (2011)

\bibitem{50}
Weiland, B., Wilson, D.B.: Winding angle variance of
Fortuin-Kasteleyn contours. Phys. Rev. E \textbf{68} (2003)

\bibitem{23}
Werner, W.: Lectures on two-dimensional critical percolation. In:
Statistical mechanics. IAS/Park City Math. Ser., Vol.\textbf{16}.
Amer. Math. Soc., Providence, RI, 297--360 (2009)

\bibitem{24}
Yao, C.-L.:  A CLT for winding angles of the arms for critical
planar percolation.  Electron. J. Probab. \textbf{18}, no. 85, 20
pp. (2013)

\end{thebibliography}
\end{document}